\documentclass[preprint, aos]{imsart}

\usepackage{amsmath, amsfonts, amssymb, amsthm, amsxtra}
\usepackage{mathrsfs, mathtools}
\usepackage{bbm}
\usepackage{graphicx}
\usepackage{lmodern}

\RequirePackage[colorlinks,citecolor=blue,urlcolor=blue]{hyperref}





\startlocaldefs

\numberwithin{equation}{section}

\newtheorem{definition}{Definition}[section]
\newtheorem{theorem}[definition]{Theorem}
\newtheorem{lemma}[definition]{Lemma}
\newtheorem{corollary}[definition]{Corollary}
\newtheorem{remark}[definition]{Remark}
\newtheorem{proposition}[definition]{Proposition}

\newcommand{\cE}{\mathcal{E}}

\newcommand{\cS}{\mathcal{S}}

\newcommand{\cU}{\mathcal{U}}

\newcommand{\subG}{\operatorname{subG}}
\newcommand{\card}{\operatorname{\mathsf{card}}}
\newcommand{\R}{{\rm I}\kern-0.18em{\rm R}}
\newcommand{\h}{{\rm I}\kern-0.18em{\rm H}}
\newcommand{\K}{{\rm I}\kern-0.18em{\rm K}}
\newcommand{\p}{{\rm I}\kern-0.18em{\rm P}}
\newcommand{\E}{{\rm I}\kern-0.18em{\rm E}}
\newcommand{\Z}{{\rm Z}\kern-0.18em{\rm Z}}
\newcommand{\1}{{\rm 1}\kern-0.24em{\rm I}}
\newcommand{\N}{{\rm I}\kern-0.18em{\rm N}}
\newcommand{\cop}{\textsf{C1P}}

\newcommand{\RankSum}{\textsf{RankSum}}
\newcommand{\RankScore}{\textsf{RankScore}}

\endlocaldefs

\begin{document}

\begin{frontmatter}

\title{Optimal rates of Statistical Seriation}
\runtitle{Statistical Seriation}

\begin{aug}

\author{\fnms{Nicolas}~\snm{Flammarion}\ead[label=flammarion]{nicolas.flammarion@ens.fr}},
\author{\fnms{Cheng}~\snm{Mao}\ead[label=mao]{maocheng@math.mit.edu}}
\and
\author{\fnms{Philippe}~\snm{Rigollet}\ead[label=rigollet]{rigollet@math.mit.edu}}

\affiliation{Ecole Normale Sup\'erieure and Massachusetts Institute of Technology}

%
%

\runauthor{Flammarion, Mao and Rigollet}
\end{aug}

\begin{abstract}
Given a matrix, the seriation problem consists in permuting its rows in such way that all its columns have the same shape, for example, they are monotone increasing. We propose a statistical approach to this problem where the matrix of interest is observed with noise and study the corresponding minimax rate of estimation of the matrices. Specifically, when the columns are either unimodal or monotone, we show that the least squares estimator is optimal up to logarithmic factors and adapts to matrices with a certain natural structure. Finally, we propose a computationally efficient estimator in the monotonic case and study its performance both theoretically and experimentally. Our work is at the intersection of shape constrained estimation and recent work that involves permutation learning, such as graph denoising and ranking.
\end{abstract}

\begin{keyword}[class=AMS]
\kwd[Primary ]{62G08}
\kwd[; secondary ]{62C20}
\end{keyword}
\begin{keyword}[class=KWD]
Statistical Seriation, Permutation learning, Minimax estimation, Adaptation, Shape constraints, Matrix estimation.
\end{keyword}


\end{frontmatter}

\section{Introduction}  \label{sec:intro}

The \emph{consecutive 1's problem} (\cop) \cite{FulGro64} is defined as follows. Given a binary matrix $A$ the goal is to permute its rows in such a way that the resulting matrix enjoys the \emph{consecutive 1's property}: each of its columns is a vector $v=(v_1, \ldots, v_n)^\top$ where $v_j=1$ if and only if $a\le j\le b$ for two integers $a,b$ between $1$ and $n$. 

This problem has its roots in archeology and especially \emph{sequence dating} where the goal is to recover the chronological order of sepultures based on artifacts found in these sepultures where the entry $A_{i,j}$ of matrix $A$ indicates the presence of artifact $j$ in sepulture $i$. In his seminal work, egyptologist Flinders Petrie~\cite{Pet99} formulated the hypothesis that two sepultures should be close in the time domain if they present similar sets of artifacts. Already in the noiseless case, this problem presents an interesting algorithmic challenge and is reducible to the famous Travelling Salesman Problem~\cite{GerGro12} as observed by statistician David Kendall \cite{Ken63,Ken69,Ken70,Ken71}  who employed early tools  from multidimensional scaling as a heuristic to solve it.  \cop\ belongs to a more general class of so-called \emph{seriation} problems that consist in optimizing various criteria over the discrete set of permutations. While such problems are hard in general, it can be shown that a 
subset of the these problems, including \cop, can be solve efficiently using spectral method~\cite{AtkBomHen98} or convex optimization~\cite{FogJenBac13, LimWri14}. However, little is known about the robustness to noise of such methods. 

%

In order to set the benchmark for the noisy case, we propose a statistical \emph{seriation model} and study optimal rates of estimation in this model. Assume that we observe an $n \times m$ matrix $Y=\Pi A + Z$,
where $\Pi$ is an unknown $n \times n$ permutation matrix, $Z$ is an $n \times m$ noise matrix and $A \in {\R}^{n \times m}$ is assumed to belong to a class of matrices that satisfy a certain shape constraint. 
Our goal is to give estimators $\hat \Pi$ and $\hat A$ so that $\hat \Pi \hat A$ is close to $\Pi A$.
The shape constraint can be the consecutive 1's property, but more generally, we consider the class of matrices that have unimodal columns, which also include monotonic columns as a special case. These terms will be formally defined at the end of this section.

The rest of the paper is organized as follows. In Section~\ref{sec:problem} we formulate the model and discuss related work. Section~\ref{sec:result} collects our main results, including uniform and adaptive upper bounds for the least squares estimator together with corresponding minimax lower bounds in the general unimodal case. In Section~\ref{sec:rankscore}, for the special case of monotone columns, we propose a computationally efficient alternative to the least squares estimator and study its rates of convergence both theoretically and numerically. Appendix~\ref{sec:upper} is devoted to the proofs of the upper bounds, which use the metric entropy bounds proved in Appendix~\ref{sec:covering}. The proofs of the information-theoretic lower bounds are presented in Appendix~\ref{sec:lower}. In Appendix~\ref{sec:monotone}, we study the rate of estimation of the efficient estimator for the monotonic case. Appendix~\ref{sec:trivial-upper} contains a delayed proof of a trivial upper bound. Appendix~\ref{sec:unimodal} presents new bounds for unimodal regression implied by our analysis, which are minimax optimal up to logarithmic factors. 
%

\medskip


%

 \textsc{Notation.}
For a positive integer $n$, define $[n]=\{1,\ldots, n\}$. For a matrix $A \in {\R}^{n \times m}$, 
let $\|A\|_F$ denote its Frobenius norm, and 
let $A_{i,\cdot}$ be its $i$-th row and $A_{\cdot,j}$ be its $j$-th column.
Let $\mathcal B^n(a, t)$ denote the Euclidean ball of radius $t$ centered at $a$ in ${\R}^n$. We use $C$ and $c$ to denote positive constants that may change from line to line. For any two sequences $(u_n)_n$ and $(v_n)_n$, we write $u_n \lesssim v_n$ if there exists an absolute constant $C>0$ such that $u_n \le C v_n$ for all $n$. We define $u_n \gtrsim v_n$ analogously. Given two real numbers $a, b$, define $a\wedge b=\min(a,b)$ and $a\vee b=\max(a,b)$.

Denote the closed convex cone of  increasing\footnote{Throughout the paper, we loosely use the terms ``increasing" and ``decreasing" to mean ``monotonically non-decreasing" and ``monotonically non-increasing" respectively.} sequences in ${\R}^n$ by $\mathcal S_n = \{a \in {\R}^n: a_1 \le \cdots \le a_n\}$.
We define $\mathcal S^m$ to be the Cartesian product of $m$ copies of $\mathcal S_n$ and we identify $\mathcal S^m$ to the set of $n \times m$ matrices with increasing columns.

For any $l\in[n]$, define the closed convex cone $\mathcal C_l=\{a \in {\R}^n: a_1 \le \cdots \le a_l\}\cap\{a \in {\R}^n: a_l\geq \cdots \geq a_n\}$, which consists of vectors in ${\R}^n$ that increase up to the $l$-th entry and then decrease. 
Define the set $\mathcal U$ of unimodal sequences in ${\R}^n$ by $\mathcal U=\bigcup _{l=1}^n \mathcal C_l$.  We define $\mathcal U^m$ to be the Cartesian product of $m$ copies of $\mathcal U$ and we identify $\mathcal U^m$ to the set of $n \times m$ matrices with unimodal columns. It is also convenient to write $\mathcal U^m$ as a union of closed convex cones as follows.
For $\mathbf l = (l_1, \dots, l_m) \in [n]^m$,  let $\mathcal C^m_\mathbf{l}=\mathcal C_{l_1}\times \cdots\times \mathcal C_{l_m}$. Then $\mathcal U^m$ is the union  of the $n^m$ closed convex cones $\mathcal C^m_\mathbf{l}, \mathbf l  \in [n]^m$.

Finally, let $\mathfrak S_n$ be the set of $n \times n$ permutation matrices and define $\mathcal M = \bigcup_{\Pi \in \mathfrak S_n} \Pi \mathcal U^m$  where $\Pi \mathcal U^m = \{ \Pi A: A \in \mathcal U^m\}$, so that $\mathcal{M}$ is the union of the $n!n^m$ closed convex cones  $\Pi\mathcal C^m_\mathbf{l}, \Pi \in \mathfrak S_n, \mathbf l  \in [n]^m$.

\section{Problem setup and related work} \label{sec:problem} 

In this section, we formally state the problem of interest and discuss several lines of related work.

\subsection{The seriation model} \label{sec:setup}

Suppose that we observe a matrix $Y \in {\R}^{n\times m}$, $n \ge 2$ such that
\begin{equation} \label{eq:model}
Y = \Pi^* A^* + Z\, , 
\end{equation} 
where $A^* \in \mathcal U^m$, $\Pi \in \mathfrak S_n$ and $Z$ is a centered sub-Gaussian noise matrix with variance proxy $\sigma^2>0$. More specifically, $Z$ is a matrix such that ${\E}[Z]=0$ and, for any $M\in {\R}^{n\times m}$,
\[
{\E}\big[\exp\big(\mathsf{Tr}(Z^\top M)\big)\big] \le \exp\Big(\frac{\sigma^2\|M\|_F^2}{2}\Big)\, ,
\]
where $\mathsf{Tr}(\cdot)$ is the trace operator.
We write  $Z\sim \operatorname{subG}_{n,m}(\sigma^2)$ or simply $Z\sim \operatorname{subG}(\sigma^2)$ when dimensions are clear from the context.

Given the observation $Y$, our goal is to estimate the unknown pair $(\Pi^*, A^*)$. The performance of an estimator $(\hat \Pi, \hat A) \in \mathfrak S_n \times \mathcal U^m$, is measured by the quadratic loss:
\[\frac{1}{nm} \| \hat \Pi\hat A - \Pi^* A^*\|_F^2\, . \]
In particular, its expectation is the mean squared error.
Since we are interested in estimating $\Pi^* A^* \in \mathcal M$, we can also view $\mathcal M$ as the parameter space.

In the general unimodal case,  upper bounds on the above quadratic loss do not imply individual upper bounds on estimation of the matrix $\Pi^*$ or the matrix $A^*$ due to lack of identifiability. Nevertheless, if we further assume that the columns of $A^*$ are monotone increasing, that is $A^* \in \cS^m$, then the following lemma holds.
\begin{lemma} \label{lem:rearrange}
If $A^*, \tilde A \in \mathcal S^m$, then for any $\Pi^*, \tilde \Pi \in \mathfrak S_n$, we have that
\[
\|\tilde A - A^*\|_F^2 \le \|\tilde \Pi \tilde A - \Pi^* A^*\|_F^2 \,, 
\]
and that
\[
\|\tilde \Pi A^* - \Pi^* A^*\|_F^2 \le 4 \|\tilde \Pi \tilde A - \Pi^* A^*\|_F^2 \,.
\]
\end{lemma}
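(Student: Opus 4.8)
The plan is to reduce everything to the classical rearrangement inequality applied column by column, and then obtain the second bound from the first by a triangle inequality.

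First I would rewrite the right-hand side in a more convenient form. Since multiplication by a permutation matrix is an isometry for the Frobenius norm, $\|\tilde\Pi\tilde A - \Pi^* A^*\|_F^2 = \|\tilde A - \Pi A^*\|_F^2$ where $\Pi := \tilde\Pi^\top \Pi^* \in \mathfrak S_n$. Because the squared Frobenius norm decomposes as a sum over columns and $\Pi$ acts in the same way on every column, it suffices to prove that for each $j \in [m]$,
\[
\|\tilde A_{\cdot,j} - A^*_{\cdot,j}\|_2^2 \le \|\tilde A_{\cdot,j} - \Pi A^*_{\cdot,j}\|_2^2,
\]
and then sum over $j \in [m]$.

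Next comes the key step. Fix $j$ and set $u := \tilde A_{\cdot,j}$ and $v := A^*_{\cdot,j}$, both of which lie in $\mathcal S_n$, i.e. are increasing. Expanding the squares and using $\|\Pi v\|_2 = \|v\|_2$, the displayed inequality is equivalent to $\langle u, \Pi v\rangle \le \langle u, v\rangle$. This is precisely the rearrangement inequality: among all permutations of the coordinates of $v$, the inner product with $u$ is maximized when the entries of $v$ are arranged in the same order as those of $u$; since $u$ and $v$ are both already increasing, that maximum is attained at $\Pi = I$. Summing over $j$ yields the first inequality of the lemma.

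Finally, for the second inequality I would use the triangle inequality together with the fact that $\tilde\Pi$ is an isometry:
\[
\|\tilde\Pi A^* - \Pi^* A^*\|_F \le \|\tilde\Pi A^* - \tilde\Pi\tilde A\|_F + \|\tilde\Pi\tilde A - \Pi^* A^*\|_F = \|A^* - \tilde A\|_F + \|\tilde\Pi\tilde A - \Pi^* A^*\|_F.
\]
By the first inequality already established, $\|A^* - \tilde A\|_F \le \|\tilde\Pi\tilde A - \Pi^* A^*\|_F$, so the right-hand side is at most $2\|\tilde\Pi\tilde A - \Pi^* A^*\|_F$; squaring gives the factor $4$. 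I do not expect any serious obstacle: the entire content is the one-dimensional rearrangement inequality, and the only point requiring care is that a single permutation $\Pi = \tilde\Pi^\top\Pi^*$ is applied across all columns simultaneously, so that the column-wise reduction is legitimate — and it is exactly the monotonicity hypothesis $A^*, \tilde A \in \mathcal S^m$ (rather than merely $\mathcal U^m$) that forces the rearrangement optimum to occur at the identity.
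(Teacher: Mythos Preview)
Your proof is correct and follows essentially the same approach as the paper: reduce the first inequality column by column to the rearrangement inequality (the paper writes $\tilde\Pi^{-1}\Pi^*$ where you write $\tilde\Pi^\top\Pi^*$, which coincide for permutation matrices), and derive the second from the first via the same triangle-inequality argument.
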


\begin{proof}
Let $a, b \in \mathcal S_n$ and $b_{\pi} = (b_{\pi(1)}, \dots, b_{\pi(n)})$ where $\pi:[n]\to [n]$ is a permutation. It is easy to check that $\sum_{i=1}^n a_i b_i \ge \sum_{i=1}^n a_i b_{\pi(i)}$, so $\|a - b\|_2^2 \le \|a - b_\pi \|_2^2$. 
Applying this inequality to columns of matrices, we see that
\[ \|\tilde A - A^*\|_F^2 \le \|\tilde A - \tilde \Pi^{-1} \Pi^* A^*\|_F^2 = \|\tilde \Pi \tilde A - \Pi^* A^*\|_F^2, \]
since $A^*, \tilde A \in \mathcal S^m$.
Moreover, $\|\tilde \Pi A^* - \tilde \Pi \tilde A\|_F = \|A^* - \tilde A\|_F$, so
\[ \|\tilde \Pi A^* - \Pi^* A^*\|_F \le \|A^* - \tilde A\|_F + \|\tilde \Pi \tilde A - \Pi^* A^*\|_F \le 2 \|\tilde \Pi \tilde A - \Pi^* A^*\|_F,
\]
by the triangle inequality and the previous display.
\end{proof}

Lemma~\ref{lem:rearrange} guarantees that $\|\tilde \Pi A^* - \Pi^* A^*\|_F$ is a pertinent measure of the performance of $\tilde \Pi$. Note further that $\|\tilde \Pi A^* - \Pi^* A^*\|_F$ is large if $\tilde \Pi$ misplaces rows of $A^*$ that have large differences, and is small if $\tilde \Pi$ only misplaces rows of $A^*$ that are close to each other. We argue that, in the seriation context, this measure of distance between permutations is more natural than ad hoc choices such as the trivial 0/1 distance or popular choices such as Kendall's $\tau$ or Spearman's $\rho$.

Apart from Section~\ref{sec:rankscore} (and Appendix~\ref{sec:monotone}), the rest of this paper focuses on the least squares (LS) estimator defined by
\begin{equation} \label{def-pi-a} 
(\hat \Pi, \hat A) \in \operatorname*{argmin}_{(\Pi, A) \in {\mathfrak S_n}\times \mathcal U^m} \|Y - \Pi A\|_F^2\, .
\end{equation}
Taking $\hat M = \hat \Pi \hat A$, we see that it is equivalent to define the LS estimator by
\begin{equation} \label{def-pia}
\hat M \in \operatorname*{argmin}_{M \in \mathcal M} \|Y - M\|_F^2 \, .
\end{equation}
Note that in our case, the set of parameters $\mathcal M$ is not convex, but is a union of $n!n^m$ closed convex cones and it is not clear how to compute the LS estimator efficiently. We discuss this aspect in further details in the context of monotone columns in Section~\ref{sec:rankscore}. Nevertheless, the main focus of this paper is the least squares estimator which, as we shall see, is near-optimal in a minimax sense and therefore serves as a benchmark for the statistical seriation model.

\subsection{Related work} \label{sec:related-work}

Our work falls broadly in the scope of statistical inference under shape constraints but presents a major twist: the unknown latent permutation $\Pi^*$. 

\subsubsection{Shape constrained regression} To set our goals, we first consider the case where the permutation is known and assume without loss of generality that $\Pi^*=I_n$.  In this case, we can estimate individually each column $A^*_{\cdot,j}$ by an estimator $\hat A_{\cdot,j}$ and then get an estimator $\hat A$ for the whole matrix  by concatenating the columns $\hat A_{\cdot,j}$.  Thus the task is reduced to estimation of  a vector $\theta^*$ which satisfies a certain shape constraint from an observation $y=\theta^*+z$ where $z\sim \subG_{n,1}(\sigma^2)$. 

When  $\theta^*$ is assumed to be increasing we speak of isotonic regression   \cite{BarBarBre72}.  The LS estimator defined by $\hat \theta =\operatorname*{argmin}_{\theta \in \mathcal S_n} \| \theta -y\|_2^2 $ can be computed in closed form in  $O(n)$ using the Pool-Adjacent-Violators algorithm (PAVA) \cite{AyeBruEwi55, BarBarBre72,RobWriDyk88}  and its statistical performance  has been studied  by Zhang \cite{Zha02} (see also \cite{NemPolTsy85, Don90, Gee90,  Mam91,  Gee93} for similar bounds using empirical process theory) who   showed in the Gaussian case $z \sim N(0, \sigma^2I_n)$ that the mean squared error behaves like
\begin{equation} \label{eq:isotonic-global}
\frac{1}{n}{\E}\|\hat \theta-\theta^*\|_2^2 \asymp \Big(\frac{\sigma^2 V(\theta^*)}{n}\Big)^{2/3} \,,
\end{equation}
where $V(\theta)=\max_{i \in[n]} \theta_i - \min_{i \in [n]} \theta_i$ is the variation of $\theta \in {\R}^n$.
Note that $2/3=2\beta/(2\beta+1)$ for $\beta=1$ so that this is the minimax rate of estimation of Lipschitz functions (see, e.g., \cite{Tsy09}). 

The rate in \eqref{eq:isotonic-global} is said to be \emph{global} has it holds uniformly over the set of monotone vectors with variation $V(\theta^*)$. Recently, \cite{ChaGunSen15} have initiated the study of \emph{adaptive} bounds that may be better if $\theta^*$ has a simpler structure in some sense. To define this structure, let $k(\theta)= \card(\{\theta_1,\cdots,\theta_n\})$ denote the cardinality of entries of $\theta \in {\R}^n$. In this context, \cite{ChaGunSen15} showed that the LS estimator satisfies the adaptive bound
\begin{equation} \label{eq:isotonic}
\frac{1}{n}{\E}\|\hat \theta-\theta^*\|_2^2\leq   C\inf_{\theta\in \mathcal S_n }\Big(\frac{\|\theta-\theta^*\|^2}{n}+\frac{\sigma^2 k(\theta)}{n}\log \frac{en}{k(\theta)}\Big) \,.
\end{equation}
This result was extended in \cite{Bel15}  to a sharp oracle inequality where $C=1$. This bound was also shown to be optimal in a minimax sense \cite{ChaGunSen15, BelTsy15}.

Unlike its monotone counterpart, unimodal regression  where $\theta^* \in \mathcal U$ has received  sporadic attention \cite{ShoZha01, Kol14, ChaLaf15}. This state of affairs is all the more surprising given that unimodal density estimation has been the subject of much more research \cite{BicFan96, Bir97, EggLaR00, DasDiaSer12,DasDiaSer13,TurGho14}. It was recently shown in \cite{ChaLaf15} that the LS estimator also adapts to $V(\theta^*)$ and $k(\theta^*)$ for unimodal regression:
\begin{equation} \label{eq:unimodal}
\frac{1}{n}\|\hat \theta-\theta^*\|_2^2\lesssim \min\Big( \sigma^{4/3}\Big( \frac{V(\theta^*)+\sigma}{n}\Big)^{2/3}, \frac{\sigma^2}{n}k(\theta^*)^{3/2}(\log n) ^{3/2}\Big) 
\end{equation}
with probability at least $1-n^{-\alpha}$ for some $\alpha > 0$. The exponent $3/2$ in the second term was improved to $1$ in the new version of \cite{ChaLaf15} after the first version of our current paper was posted. Note that the exponents in \eqref{eq:unimodal} are different from the isotonic case. Our results will imply that they are not optimal and in fact the LS estimator achieves the same rate as in isotonic regression. See Corollary~\ref{cor:unimodal} for more details. 
The algorithmic aspect of unimodal regression has received more attention  \cite{Fri80,GenShi90,BroSid98,BoyMag06} and  \cite{Sto08} showed that the LS estimator can be computed with time complexity $O(n)$ using a modified version of PAVA. 
Hence there is little difference between isotonic and unimodal regressions from both  computational and statistical points of views.

\subsubsection{Latent permutation learning} When  the permutation $\Pi^*$ is unknown the estimation problem is more involved. Noisy permutation learning was explicitly addressed in \cite{ColDal16} where the problem of matching two sets of noisy vectors was studied from a statistical point of view. Given $n \times  m$ matrices  $Y=A^*+Z$ and $\tilde Y = \Pi^* A^* + \tilde Z$, where $A^* \in{\R}^{n\times m}$ is an unknown matrix and $\Pi^*  \in{\R}^{n\times n}$ is an unknown permutation matrix, the goal is to recover $\Pi^*$. It was shown in \cite{ColDal16} that if   $\min_{i\neq j} \|A_{i, \cdot} -A_{j, \cdot}\|_2 \ge  c \sigma \big( (\log n)^{1/2} \vee (m \log n)^{1/4} \big)$, then the LS estimator defined by $\hat \Pi=\operatorname*{argmin}_{\Pi\in\mathfrak S_n} \|\Pi Y-\tilde Y\|_F^2$ recovers the true permutation with high probability. However they did not directly study the behavior of  $\| \hat \Pi A^* -\Pi^* A^* \|^2_F$.

In his celebrated paper on matrix estimation~\cite{Cha15}, Sourav Chatterjee describes several noisy matrix models involving unknown latent permutations. One is the \emph{nonparametric Bradley-Terry-Luce} (NP-BTL) model where we observe a matrix $Y \in {\R}^{n \times n}$ with independent entries $Y_{i,j}\sim \operatorname{Ber}(P_{i,j})$ for some unknown parameters $P=\{P_{i,j}\}_{1\le i,j \le n}$ where $P_{i,j} \in [0,1]$ is equal to the probability that item $i$ is preferred over item $j$ and $P_{j,i}=1-P_{i,j}$. Crucially, the NP-BTL model assumes the so-called \emph{strong stochastic transitivity (SST)} \cite{DavMar59,Fis73} assumption: there exists an unknown permutation matrix $\Pi \in \R^{n \times n}$ such that the ordered matrix $A=\Pi^\top P \Pi$ satisfies $A_{1,k} \le \cdots \le A_{n,k}$ for all $k \in [n]$. Note that the NP-BTL model is a special case of our model \eqref{eq:model} where $m=n$ and $Z \sim \operatorname{subG}(1/4)$ is taken to be Bernoulli. Chatterjee proposed an estimator  $\hat P$ that leverages the 
fact that any matrix $P$ in the NP-BTL model 
can be approximated by a low rank matrix and proved \cite[Theorem~2.11]{Cha15} that $n^{-2} \|\hat P-P\|_F^2\lesssim n^{-1/4}$, which was improved to $n^{-1/2}$ by \cite{ShaBalGunWai15} for a variation of the estimator. This method does not yield individual estimators of $\Pi$ or $A$, and \cite{ChaMuk16} proposed estimators $\hat \Pi$ and $\hat A$  so that $\hat \Pi \hat A \hat\Pi^\top$ estimates $P$ with the same rate $n^{-1/2}$ up to a logarithmic factor. 
The non-optimality of this rate has been observed in \cite{ShaBalGunWai15} who showed that the correct rate should be of order $n^{-1}$ up to a possible $\log n$ factor. However, it is not known whether a computationally efficient estimator could achieve the fast rate. 
A recent work \cite{ShaBalWai16} explored a new notion of adaptivity for which the authors proved a computational lower bound, and also proposed an efficient estimator whose rate of estimation matches that lower bound.

Also mentioned in Chatterjee's paper is the so-called \emph{stochastic block model} that has since received such extensive attention in various communities that it is futile to attempt to establish a comprehensive list of references. Instead, we refer the reader to~\cite{GaoLuZho15} and references therein.  This paper establishes the minimax rates for this problem and its continuous limit, the graphon estimation problem and, as such, constitutes the state-of-the-art in the statistical literature. In the stochastic block model with $k \ge 2$ blocks, we assume that we observe a matrix $Y=P+Z$ where $P=\Pi A \Pi^\top, \Pi \in \R^{\times n}$ is an unknown permutation matrix and $A$ has a block structure, namely, there exist positive integers $n_1< \ldots <n_{k}<n_{k+1}:=n$, and $k^2$ real numbers $a_{s,t}, (s,t) \in [k]^2$ such that $A$ has entries 
$$
A_{i,j}=\sum_{(s,t) \in [k]^2}a_{s,t} \1\{n_s\le i \le n_{s+1}, n_t\le j \le n_{t+1}\}\,, \qquad i,j \in [n]\,.
$$
While traditionally, the stochastic block model is a network model and therefore pertains only to Bernoulli observations, the more general case of sub-Gaussian additive error is also explicitly handled in~\cite{GaoLuZho15}. For this problem, Gao, Liu and Zhou have established that the least squares estimator $\hat P$ satisfies $n^{-2} \|\hat P-P\|_F^2\lesssim k^2/n^2+(\log k)/n$ together with a matching lower bound. Using piecewise constant approximation to bivariate H\"older functions, they also establish that this estimator with a correct choice of $k$ leads to minimax optimal estimation of smooth graphons. 
Both results exploit extensively the fact that the matrix $P$ is equal to or can be well approximated by a piecewise constant matrix and our results below take a similar route by observing that monotone and unimodal vectors are also well approximated by piecewise constant ones. Moreover, we allow for rectangular matrices. 

In fact, our result can be also formulated as a network estimation problem but on a bipartite graph, thus falling at the intersection of the above two examples. Assume that $n$ left nodes represent items and that $m$ right nodes represent users. Assume further that we observe the $n \times m$ adjacency matrix $Y$ of a random graph where the presence of edge $(i,j)$ indicates that user $j$ has purchased or liked item $i$. Define $P=\E[Y]$ and assume SST across items in the sense that there exists an unknown $n \times n$ permutation matrix $\Pi^*$ such that $P=\Pi^* A^*$ and $A^*$ is such that $A^*_{1, j} \le \cdots \le A^*_{n, j}$ for all users $j \in [m]$. This model falls into the scope of the statistical seriation model \eqref{eq:model}.

\section{Main results} \label{sec:result}


\subsection{Adaptive oracle inequalities}

For a matrix $A \in \mathcal U^m$, let $k(A_{\cdot,j})=\card(\{A_{1,j}, \dots, A_{n,j}\})$ be the number of values taken by the $j$-th column of $A$ and define $K(A) = \sum_{j=1}^m k(A_{\cdot,j})$. Observe that $K(A)\ge m$. The first theorem shows that the LS estimator adapts to the complexity $K$.

\begin{theorem} \label{thm:adaptive}
For $A^* \in {\R}^{n \times m}$ and $Y = \Pi^* A^* + Z$, let $(\hat \Pi, \hat A)$ be the LS estimator defined in \eqref{def-pi-a}. Then the following oracle inequality holds
\begin{equation} \label{eq:oracle-loss} 
\frac{1}{nm} \| \hat \Pi\hat A - \Pi^* A^*\|_F^2 \lesssim   \min_{A \in \mathcal U^m} \Big(\frac 1{nm} \|A - A^*\|_F^2 + \sigma^2 \frac {K(A)}{nm} \log \frac{enm}{K(A)}\Big) + \sigma^2 \frac{\log n}m
\end{equation}
with probability at least $1-e^{-c(n+m)}, c>0$. Moreover, 
\begin{equation} \label{eq:oracle-mse} 
\frac{1}{nm} \E\| \hat \Pi\hat A - \Pi^* A^*\|_F^2 \lesssim   \min_{A \in \mathcal U^m} \Big(\frac 1{nm} \|A - A^*\|_F^2 + \sigma^2 \frac {K(A)}{nm} \log \frac{enm}{K(A)}\Big) + \sigma^2 \frac{\log n}m \, .
\end{equation}
\end{theorem}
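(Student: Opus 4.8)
The plan is to regard $\hat M = \hat\Pi\hat A$ as the Euclidean projection of $Y$ onto the (non-convex) set $\mathcal M$, which is a union of $n!\,n^m$ closed convex cones, and to follow the classical two-step route for least squares over such a union: a deterministic \emph{basic inequality}, followed by control of a \emph{localized} stochastic term, made uniform in the unknown radius by a peeling argument. For any fixed $A \in \mathcal U^m$, writing $\mu^* = \Pi^* A^*$, the optimality of $\hat M$ together with $Y = \mu^* + Z$ gives
\[
\|\hat M - \mu^*\|_F^2 \;\le\; \|A - A^*\|_F^2 \;+\; 2\,\big\langle Z,\; \hat M - \Pi^* A\big\rangle ,
\]
so the whole task reduces to bounding $\langle Z, \hat M - \Pi^* A\rangle$ by $\varepsilon\big(\|\hat M - \mu^*\|_F + \|A - A^*\|_F\big)$ plus terms free of $\hat M$; solving the resulting quadratic inequality in $\|\hat M - \mu^*\|_F$ then yields $\tfrac1{nm}\|\hat M - \mu^*\|_F^2 \lesssim \tfrac1{nm}\|A - A^*\|_F^2 + \tfrac{\varepsilon^2}{nm} + \cdots$, and one minimizes over $A$ at the end.

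The stochastic term is at most the supremum of $\langle Z, M - \Pi^* A\rangle$ over $M \in \mathcal M$ within Frobenius distance $r := \|\hat M - \mu^*\|_F + \|A - A^*\|_F$ of $\Pi^* A$. The key structural fact I would exploit is that $\Pi^* A$ is \emph{piecewise constant}: since column $j$ of $A$ is unimodal with $k(A_{\cdot,j})$ distinct values it has at most $2k(A_{\cdot,j})$ constant runs, so $\Pi^* A$ belongs to one of the block-constant linear subspaces determined by the permutation ($n!$ choices), the column modes ($n^m$ choices), and the breakpoints (at most $\prod_j\binom{n}{2k(A_{\cdot,j})}$ choices), each of dimension $O(K(A))$; counting these using $\sum_j k(A_{\cdot,j}) = K(A)$, concavity, and $K(A) \ge m$ gives at most $\exp\!\big(C\,(n\log n + K(A)\log\tfrac{enm}{K(A)})\big)$ subspaces. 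Projecting $\hat M$ onto the subspace $V$ that contains $\Pi^* A$, the term $\langle P_V Z, \hat M - \Pi^* A\rangle$ is at most $\|P_V Z\|_F\,\|\hat M - \Pi^* A\|_F$, and a union bound over the subspaces together with sub-Gaussian concentration of $\|P_V Z\|_F$ yields $\max_V \|P_V Z\|_F \lesssim \sigma\sqrt{\,n\log n + K(A)\log\tfrac{enm}{K(A)}\,}$ with probability at least $1 - e^{-c(n+m)}$. After dividing by $nm$ this contributes precisely the adaptive term $\sigma^2\tfrac{K(A)}{nm}\log\tfrac{enm}{K(A)}$ and the permutation cost $\sigma^2\tfrac{\log n}{m}$.

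It remains to handle $\langle Z, (I - P_V)\hat M\rangle$, the contribution of the part of $\hat M$ invisible to its block-constant skeleton; this part has Frobenius norm at most $\|\hat M - \Pi^* A\|_F$ and lies in the $V^\perp$-image of one of the $n!\,n^m$ unimodal cones $\Pi\,\mathcal C^m_{\mathbf l}$. Here I would call on the metric entropy bounds for $\mathcal C^m_{\mathbf l}$ established in Appendix~\ref{sec:covering}: Dudley's entropy integral bounds the supremum of $\langle Z,\cdot\rangle$ over one localized cone by a constant multiple of $\sigma$ times the local radius, up to logarithmic factors, and a union bound over the cones multiplies this by $\sqrt{\log(n!\,n^m)} \lesssim \sqrt{(n+m)\log n}$. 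What must be checked is that the resulting quantity is of the same order as the terms already collected, hence absorbable — this is exactly where the sharpness of the entropy bounds of Appendix~\ref{sec:covering} is needed, and I expect it to be the main obstacle: one has to fold the continuous, shape-constrained complexity within a cone and the discrete complexity of choosing among $n!\,n^m$ cones into a single bound that is linear in the local radius, uniform over all radii (which is why the final argument is run on a dyadic grid of radii, costing only a $\log\log$ factor), and never dominating $\sigma^2\,K(A)\log\tfrac{enm}{K(A)}$.

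Feeding these pieces back into the basic inequality gives the high-probability statement \eqref{eq:oracle-loss}; the in-expectation bound \eqref{eq:oracle-mse} then follows by integrating the resulting tail and using the crude deterministic bound on the loss from Appendix~\ref{sec:trivial-upper} to render the $e^{-c(n+m)}$ exceptional event negligible in expectation.
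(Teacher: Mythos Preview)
Your basic-inequality route is a legitimate alternative to the paper's use of Chatterjee's variational formula (Lemma~\ref{lem:variational}), but the subspace/orthogonal split is a red herring and your description of the orthogonal-complement term is where the argument is incomplete.

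The paper does not decompose at all. It applies Lemma~\ref{lem:variational} with center $\tilde A$ (the oracle) and shows directly, via chaining, that
\[
\sup_{M \in \mathcal M \cap \mathcal B^{nm}(\tilde A, t)} \langle M - \tilde A, Z\rangle \;\le\; C\sigma t\sqrt{K(\tilde A)\log\tfrac{enm}{K(\tilde A)} + n\log n} \;+\; st
\]
with high probability, uniformly in $t$. The point you have not absorbed is Lemma~\ref{lem:cover-matrix-union}: the metric entropy of the \emph{entire} union $\mathcal M$, localized at a point $\tilde A$ with $K(\tilde A)$ pieces, is already of order $\varepsilon^{-1} t\,K(\tilde A)\log\tfrac{enm}{K(\tilde A)} + n\log n$. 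The $K(\tilde A)$ dependence enters through the \emph{center}, not through any subspace you extract, because translating \emph{any} cone $\Pi\mathcal C_{\mathbf l}^m$ by a piecewise-constant matrix lands it inside a product of $O(K(\tilde A))$ small monotone cones (this is the content of Lemma~\ref{lem:cover-matrix-uni}). Consequently, once you localize at $\Pi^*A$, Dudley's integral over a \emph{single} cone already gives $\sigma r\sqrt{K(A)\log\tfrac{enm}{K(A)}}$---not ``$\sigma$ times the local radius up to logs'' as you write---and a union bound over the $n!\,n^m$ cones \emph{adds} $\sigma r\sqrt{n\log n}$ rather than multiplying. Your orthogonal-complement term is therefore the same size as your subspace term, so the split buys nothing; but it is also not needed, because the full entropy bound delivers both contributions at once.

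Two smaller differences: the paper sidesteps your peeling by exploiting that $\mathcal M$ is star-shaped, so the localized set rescales into the single fixed set $\Theta_{\mathcal M}(\tilde A, 1)$ of \eqref{eq:def-theta} and one chaining bound covers all $t$ (Lemma~\ref{lem:f-bound}); and for \eqref{eq:oracle-mse} the paper just integrates the exponential tail in $s$---the trivial bound of Appendix~\ref{sec:trivial-upper} is not invoked here.
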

Note that while we assume that $A^* \in \mathcal U^m$ in \eqref{eq:model}, the above oracle inequalities hold in fact for any $A^* \in {\R}^{n \times m}$ even if its columns are \emph{not} assumed to be unimodal. 

The above oracle inequalities indicate that the LS estimator automatically trades off the approximation error $\|A - A^*\|_F^2$ for the stochastic error $\sigma^2 K(A) \log (enm/K(A))$.

If $A^*$ is assumed to have unimodal columns, then we can take $A = A^*$ in \eqref{eq:oracle-loss} and \eqref{eq:oracle-mse} to get the following corollary.

\begin{corollary} \label{cor:adaptive}
For $A^* \in \mathcal U^m$ and $Y = \Pi^* A^* + Z$, the LS estimator $(\hat \Pi, \hat A)$ satisfies 
\[
\frac{1}{nm} \| \hat \Pi\hat A - \Pi^* A^*\|_F^2  \lesssim \sigma^2 \Big(\frac {K(A^*)}{nm} \log \frac{enm}{K(A^*)} + \frac{\log n}m \Big)
\]
with probability at least $1- e^{-c(n+m)}, c>0$. 
Moreover, the corresponding bound with the same rate holds in expectation.
\end{corollary}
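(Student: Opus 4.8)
The plan is to obtain this as an immediate specialization of Theorem~\ref{thm:adaptive} to the choice $A = A^*$. The oracle inequalities \eqref{eq:oracle-loss} and \eqref{eq:oracle-mse} were proved for an arbitrary $A^* \in {\R}^{n \times m}$, with a minimum over all $A \in \mathcal U^m$ on the right-hand side. Under the standing assumption of the corollary we now have $A^* \in \mathcal U^m$, so $A^*$ is itself a feasible point of that minimization. Hence the minimum is bounded above by the value of the objective at $A = A^*$, which is
\[
\frac{1}{nm}\|A^* - A^*\|_F^2 + \sigma^2\frac{K(A^*)}{nm}\log\frac{enm}{K(A^*)} = \sigma^2\frac{K(A^*)}{nm}\log\frac{enm}{K(A^*)}\,,
\]
since the approximation-error term vanishes.

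Substituting this bound into \eqref{eq:oracle-loss} gives
\[
\frac{1}{nm}\|\hat\Pi\hat A - \Pi^* A^*\|_F^2 \lesssim \sigma^2\frac{K(A^*)}{nm}\log\frac{enm}{K(A^*)} + \sigma^2\frac{\log n}{m}
\]
on the event of probability at least $1 - e^{-c(n+m)}$ provided by the theorem, which is precisely the high-probability statement. Performing the same substitution in \eqref{eq:oracle-mse} yields the companion bound in expectation. No additional estimates are required.

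There is essentially no obstacle here: the only things to verify are that $A^*$ is admissible in the infimum, which is exactly the hypothesis $A^* \in \mathcal U^m$, and that the complexity functional $K$ evaluated at $A^*$ coincides with the quantity $K(A^*)$ in the statement, which is immediate from the definition $K(A) = \sum_{j=1}^m k(A_{\cdot,j})$. (If desired, one can add the remark that $K(A^*) \ge m$ and that $t \mapsto t\log(enm/t)$ is nondecreasing on $(0, nm]$, so the displayed rate is genuinely smaller than what the trivial estimator would give; but this observation is not needed for the corollary.) All the substantive work — the metric entropy bounds for $\mathcal M$ from Appendix~\ref{sec:covering}, the peeling/localization argument, and the union bound over the $n!\,n^m$ cones $\Pi\mathcal C^m_{\mathbf l}$ — lives in the proof of Theorem~\ref{thm:adaptive}.
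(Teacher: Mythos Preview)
Your proposal is correct and matches the paper's approach exactly: the paper states just before the corollary that one simply takes $A = A^*$ in \eqref{eq:oracle-loss} and \eqref{eq:oracle-mse}, which is precisely what you do. No further argument is needed.
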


The two terms in the adaptive bound can be understood as follows. The first term corresponds to the estimation of the matrix $A^*$ with unimodal columuns if the permutation $\Pi^*$ is known. It can be viewed as a matrix version of the adaptive bound \eqref{eq:isotonic} in the vector case. The LS estimator adapts to the cardinality of entries of $A^*$ as it achieves a provably better rate if $K(A^*)$ is smaller while not requiring knowledge of $K(A^*)$. The second term corresponds to the error due to the unknown permutation $\Pi^*$. As $m$ grows to infinity this second term vanishes, because we have more samples to estimate $\Pi^*$ better. If $m \ge n$, it is easy to check that the permutation term is dominated by the first term, so the rate of estimation is the same as if the permutation is known.

\subsection{Global oracle inequalities}

The bounds in Theorem~\ref{thm:adaptive} adapt to the cardinality of the oracle. 
In this subsection, we state another type of upper bounds for the LS estimator $(\hat \Pi, \hat A)$.
They are called global bounds because they hold uniformly over the class of matrices whose columns are unimodal and that have bounded  variation.
Recall that we call \emph{variation} of a vector $a \in {\R}^n$ the scalar $V(a) \ge 0$ defined by
\[
V(a) = \max_{1\le i \le n} a_i - \min_{1 \le i \le n} a_i \,.
\]
We extend this notion to a matrix $A \in {\R}^{n\times m}$ by defining
\[
V(A) =  \Big(\frac 1m \sum_{j=1}^m V(A_{\cdot,j})^{2/3}\Big)^{3/2}\, . 
\]
While this $2/3$-norm may seem odd at first sight, it turns out to be the correct extrapolation from vectors to matrices, at least in the context under consideration here. Indeed, the following upper bound, in which this quantity naturally appears, is matched by the lower bound of Theorem~\ref{thm:lower-global} up to logarithmic terms.

\begin{theorem} \label{thm:global}
For $A^* \in {\R}^{n \times m}$ and $Y = \Pi^* A^* + Z$, let $(\hat \Pi, \hat A)$ be the LS estimator defined in \eqref{def-pi-a}. Then it holds that
\begin{multline} \label{eq:global-loss}
\frac{1}{nm} \| \hat \Pi\hat A - \Pi^* A^*\|_F^2
\lesssim  \min_{A \in \mathcal U^m} \Big[\frac 1{nm} \|A - A^*\|_F^2 + \Big(\frac{\sigma^2 V(A)\log n}{n}\Big)^{2/3} \Big] + \sigma^2 \frac{\log n}{n\wedge m} \,.
\end{multline}
with probability at least $1-e^{-c(n+m)}, c>0$. Moreover, the corresponding bound with the same rate holds in expectation. 
\end{theorem}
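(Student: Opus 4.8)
\textbf{Proof proposal for Theorem~\ref{thm:global}.}
The plan is to run the usual least-squares-over-a-union-of-cones argument, with the metric entropy estimates of Appendix~\ref{sec:covering} supplying the complexity bound; the architecture is the same as for Theorem~\ref{thm:adaptive}, only the stochastic term changes. Throughout write $\langle B, C\rangle := \mathsf{Tr}(B^\top C)$, $M^* := \Pi^* A^*$, $\hat M := \hat\Pi\hat A$, and fix a deterministic $A \in \cU^m$ attaining the minimum in \eqref{eq:global-loss}, with $\bar M := \Pi^* A \in \cM$. Since $\hat M$ minimizes $M \mapsto \|Y - M\|_F^2$ over $\cM \ni \bar M$ and $Y = M^* + Z$, the basic inequality reads
\[
\|\hat M - M^*\|_F^2 \le \|\bar M - M^*\|_F^2 + 2\langle Z,\, \hat M - \bar M\rangle .
\]
Using $\|\hat M - \bar M\|_F^2 \le 2\|\hat M - M^*\|_F^2 + 2\|\bar M - M^*\|_F^2$, it therefore suffices to prove that, with probability at least $1 - e^{-c(n+m)}$,
\[
\sup_{M \in \cM} \Big( \langle Z,\, M - \bar M\rangle - \tfrac18 \|M - \bar M\|_F^2 \Big) \;\lesssim\; nm\Big[ \Big(\tfrac{\sigma^2 V(A)\log n}{n}\Big)^{2/3} + \tfrac{\sigma^2 \log n}{n \wedge m}\Big] =: R ,
\]
since then the basic inequality rearranges to $\|\hat M - M^*\|_F^2 \lesssim \|\bar M - M^*\|_F^2 + R$, which is \eqref{eq:global-loss} after dividing by $nm$ and using $\|\bar M - M^*\|_F = \|A - A^*\|_F$. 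The expectation bound follows by integrating the tail, the low-probability event being handled by the crude deterministic control $\|\hat M - M^*\|_F^2 \lesssim \|Z\|_F^2 + \|M^*\|_F^2$ together with the estimate of Appendix~\ref{sec:trivial-upper}.

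To bound the supremum, decompose $\cM = \bigcup_{\Pi \in \mathfrak S_n,\, \mathbf l \in [n]^m} \Pi \cC^m_{\mathbf l}$ into its $n!n^m$ closed convex cones and union bound over them. Inside a fixed cone, writing $M = \Pi B$ with $B \in \cC^m_{\mathbf l}$ gives $M - \bar M = \Pi(B - A^\Pi)$ where $A^\Pi := \Pi^\top \Pi^* A$ is a column-wise permuted copy of $A$ (so $V(A^\Pi_{\cdot,j}) = V(A_{\cdot,j})$), whence $\langle Z, M - \bar M\rangle = \langle W, B - A^\Pi\rangle$ with $W := \Pi^\top Z \sim \subG(\sigma^2)$ and $\|M - \bar M\|_F = \|B - A^\Pi\|_F$; the problem thus becomes one about the \emph{fixed} product cone $\cC^m_{\mathbf l}$ and a Frobenius ball around $A^\Pi$. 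Now run a peeling argument over dyadic radii: for $\rho$ a power of two with $\rho^2 \gtrsim R$, on the slice $\|M - \bar M\|_F \le \rho$ one needs the per-cone expected width $\psi(\rho) := \E \sup\{ \langle W, B - A^\Pi\rangle : B \in \cC^m_{\mathbf l},\ \|B - A^\Pi\|_F \le \rho \}$ together with, uniformly over the $n!n^m$ cones, a sub-Gaussian deviation term of order $\sigma \rho \sqrt{\log(n!n^m)} \asymp \sigma \rho \sqrt{(n+m)\log n}$, and one checks that above the threshold $\rho_0 \asymp \sqrt R$ the gain $\tfrac18\rho^2$ beats both contributions.

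The expected width $\psi(\rho)$ is estimated column by column via Dudley's entropy integral fed by the covering bounds of Appendix~\ref{sec:covering}: for a single column constrained to $\cC_{l_j}$ with variation at most $V_j$ and contained in a ball of radius $\rho_j$, the log-covering number at scale $\eps$ is $\lesssim \sqrt{n}\,V_j/\eps$ (plus a $\log n$ for the unknown mode), so that column contributes $\lesssim \sigma\, n^{1/4} \sqrt{V_j \rho_j}$. Summing and maximizing $\sum_j \sigma n^{1/4}\sqrt{V_j\rho_j}$ subject to $\sum_j \rho_j^2 \le \rho^2$ is a Lagrange computation with extremizer $\rho_j \propto V_j^{1/3}$, yielding $\psi(\rho) \lesssim \sigma n^{1/4} \rho^{1/2} \big(\sum_j V_j^{2/3}\big)^{3/4} = \sigma n^{1/4} m^{3/4} V(A)^{1/2} \rho^{1/2}$ --- which is precisely why $V(\cdot)$ on matrices is the $2/3$-averaged norm. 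Solving the fixed-point equation $\rho^2 \asymp \sigma n^{1/4} m^{3/4} V(A)^{1/2} \rho^{1/2}$ gives $\rho^2/(nm) \asymp (\sigma^2 V(A)/n)^{2/3}$ (the analogue of \eqref{eq:isotonic-global}), while the fixed point $\rho^2 \asymp \sigma^2 (n+m)\log n$ of the deviation term gives $\rho^2/(nm) \asymp \sigma^2 \log n/(n\wedge m)$; taking the larger of the two for $\rho_0$ produces $R$, the extra $\log n$ inside the first term being what the localized entropy of $\cM$ costs on top of \eqref{eq:isotonic-global}, consistent with the ``up to logarithmic terms'' claim.

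The main obstacle is making all of this uniform over the $n!n^m$ cones while keeping the bound expressed through the oracle's variation $V(A)$ rather than $V(A^*)$ (which need not be finite): a generic difference $M - \bar M$ carries no shape whatsoever, and it is only the cone-by-cone substitution $M = \Pi B$ that restores enough structure; one must then verify that the localized metric entropy of $\cC^m_{\mathbf l}$ around the permuted-unimodal center $A^\Pi$ is still governed by $\sum_j V(A_{\cdot,j})^{2/3}$, up to constants and log factors --- this is exactly the content of the covering lemmas of Appendix~\ref{sec:covering}, and, combined with careful propagation of the $\log$ factors through the peeling and the union bound, it is where the real work lies. Once the high-probability statement is in place, the passage to expectation and the verification of the $e^{-c(n+m)}$ tail are routine.
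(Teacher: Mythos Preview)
Your approach is genuinely different from the paper's and considerably harder than necessary. The paper does \emph{not} redo the least-squares analysis with $V$-based entropy bounds; instead it derives Theorem~\ref{thm:global} as a quick corollary of the adaptive bound, Theorem~\ref{thm:adaptive}. The mechanism is the block-approximation trick of~\cite{BelTsy15} (Lemma~\ref{lem:vector-approximation} and its matrix version Lemma~\ref{lem:block-approximation}): for any $A\in\cU^m$, one constructs a piecewise constant $\tilde A\in\cU^m$ with $k_j^*=\lceil (V(A_{\cdot,j})^2 n/(\sigma^2\log(en)))^{1/3}\rceil$ pieces per column, so that both $\frac{1}{nm}\|\tilde A-A\|_F^2$ and $\frac{\sigma^2 K(\tilde A)}{nm}\log(en)$ are bounded by $(\sigma^2 V(A)\log n/n)^{2/3}+\sigma^2(\log n)/n$. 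Plugging $\tilde A$ into~\eqref{eq:oracle-loss} and~\eqref{eq:oracle-mse} and using the triangle inequality through $A$ finishes the proof in a few lines. This is also where the $2/3$-power definition of $V(A)$ actually enters: it is exactly the exponent that makes the per-column balance $k_j^*\asymp V_j^{2/3}$ sum correctly.

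Your direct route --- entropy in $V$, column-wise Dudley, Lagrange allocation, peeling --- can in principle be made to work, but it has a concrete gap as written. You appeal to ``the covering lemmas of Appendix~\ref{sec:covering}'' for a bound of the form $\log N\lesssim \sqrt{n}\,V_j/\eps$ on the localized set around $A^\Pi_{\cdot,j}$, but those lemmas (Lemmas~\ref{lem:cover-vector-centered}, \ref{lem:cover-matrix-uni}, \ref{lem:cover-matrix-union}) are phrased in terms of $K(A)$, not $V(A)$; they give $\log N\lesssim \eps^{-1} t\, K(A)\log(enm/K(A))$ and say nothing about variation. The $\sqrt{n}\,V/\eps$ bound you want is the classical Birman--Solomjak/Chatterjee bound for monotone vectors with \emph{bounded range}, quoted inside the proof of Lemma~\ref{lem:cover-vector-centered} but not its conclusion. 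To use it on your localized ball you would first have to argue that $b\in\cC_{l_j}$ with $\|b-A^\Pi_{\cdot,j}\|_2\le\rho_j$ has range at most $V_j+2\rho_j$ (true coordinatewise), so the entropy is $\lesssim \sqrt{n}(V_j+\rho_j)/\eps$, not $\sqrt{n}V_j/\eps$; the extra $\rho_j$ term generates an additional contribution that must be tracked and absorbed. You would also need a genuine product-space covering argument (in the spirit of Lemma~\ref{lem:cover-product}) rather than the informal ``column-by-column Dudley then Lagrange'' sketch, since the radii $\rho_j$ are not fixed but part of the supremum. None of this is fatal, but it is a substantial detour compared to the two-line reduction the paper uses.
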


If $A^* \in \mathcal U^m$, then taking $A = A^*$ in Theorem~\ref{thm:global} leads to the following corollary that indicates that the LS estimator is adaptive to the quantity~$V(A^*)$. 

\begin{corollary} \label{cor:global}
For $A^* \in \mathcal U^m$ and $Y = \Pi^* A^* + Z$, the LS estimator $(\hat \Pi, \hat A)$ satisfies 
\[
\frac{1}{nm} \| \hat \Pi\hat A - \Pi^* A^*\|_F^2\lesssim \Big(\frac{\sigma^2 V(A^*)\log n }{n}\Big)^{2/3}  + \sigma^2 \frac{\log n}{n\wedge m}
\]
with probability at least $1-e^{-c(n+m)}, c>0$. Moreover, the corresponding bound with the same rate holds in expectation. 
\end{corollary}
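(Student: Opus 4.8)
The plan is to obtain this corollary as an immediate specialization of Theorem~\ref{thm:global}. Since we now assume $A^* \in \mathcal U^m$, the true matrix is itself a feasible point in the minimization on the right-hand side of \eqref{eq:global-loss}. I would therefore simply take $A = A^*$ in that bound: the approximation term $\frac{1}{nm}\|A - A^*\|_F^2$ vanishes, and $V(A) = V(A^*)$, so the high-probability inequality of Theorem~\ref{thm:global} collapses to
\[
\frac{1}{nm}\|\hat\Pi\hat A - \Pi^* A^*\|_F^2 \lesssim \Big(\frac{\sigma^2 V(A^*)\log n}{n}\Big)^{2/3} + \sigma^2\,\frac{\log n}{n\wedge m}\,,
\]
which is exactly the claimed bound, holding with probability at least $1 - e^{-c(n+m)}$.

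For the statement in expectation, I would likewise invoke the in-expectation version of Theorem~\ref{thm:global} at $A = A^*$. Alternatively, one can integrate the tail: on the event of probability at least $1 - e^{-c(n+m)}$ the displayed bound holds, while on the complementary event one controls $\frac{1}{nm}\|\hat\Pi\hat A - \Pi^* A^*\|_F^2$ by a crude deterministic estimate combined with sub-Gaussian moment bounds on $Z$ (this is precisely the kind of ``trivial upper bound'' relegated to Appendix~\ref{sec:trivial-upper}), and checks that the resulting contribution, weighted by $e^{-c(n+m)}$, is of strictly lower order. Either route is routine once the concentration inequality is available.

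There is essentially no obstacle at this level. All the substance — the chaining/metric-entropy control of the supremum of the Gaussian-type process over the union of cones $\Pi\mathcal C^m_{\mathbf l}$ (using the covering bounds of Appendix~\ref{sec:covering}), together with the piecewise-constant approximation of unimodal columns that converts the variation $V(A^*)$ into the exponent-$2/3$ rate — is carried out inside the proof of Theorem~\ref{thm:global}. The only thing to verify in the corollary is the trivial feasibility of $A^*$ in $\mathcal U^m$ and the bookkeeping that $V(A) = V(A^*)$ for $A = A^*$, where $V(\cdot)$ is the $2/3$-averaged column variation. Hence the ``hard part'' here is purely notational: ensuring $V(\cdot)$ is used consistently between the theorem and the corollary.
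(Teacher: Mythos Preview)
Your proposal is correct and matches the paper's approach exactly: the paper states explicitly, just before the corollary, that it follows by taking $A = A^*$ in Theorem~\ref{thm:global}. One small quibble: Appendix~\ref{sec:trivial-upper} is not about a crude tail bound for the LS estimator but about an averaging estimator in the extreme small-variation regime; the expectation bound here comes directly from the in-expectation version of Theorem~\ref{thm:global}, as you note in your primary route.
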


Akin to the adaptive bound, the above inequality can be viewed as a sum of a matrix version of \eqref{eq:isotonic-global} and an error due to estimation of the unknown permutation.

Having stated the main upper bounds, we digress a little to remark that the proofs of Theorem~\ref{thm:adaptive} and Theorem~\ref{thm:global} also yield a minimax optimal rate of estimation (up to logarithmic factors) for unimodal regression, which improves the bound \eqref{eq:unimodal}. We discuss the details in Appendix~\ref{sec:unimodal}.

\subsection{Minimax lower bounds}

Given the model $Y = \Pi^* A^* + Z$ where entries of $Z$ are i.i.d. $N(0, \sigma^2)$ random variables, let $(\hat \Pi, \hat A)$ denote any estimator of $(\Pi^*, A^*)$, i.e., any pair in $\mathfrak S_n \times {\R}^{n\times m}$ that is measurable with respect to the observation $Y$. We will prove lower bounds that match the rates of estimation in Corollary~\ref{cor:adaptive} and Corollary~\ref{cor:global} up to logarithmic factors. The combination of upper and lower bounds, implies simultaneous near optimality of the least squares estimator over a large scale of matrix classes.

\medskip

For $m \le K_0\le nm$ and  $V_0>0$, define
$\mathcal U_{K_0}^m = \big\{A \in \mathcal U^m: K(A) \le K_0 \big\} $
and
$\mathcal U^m(V_0) = \big\{A \in \mathcal U^m: V(A) \le V_0 \big\}. $ We present below two lower bounds, one for the adaptive rate uniformly over $\mathcal U_{K_0}^m$ and one for the global rate uniformly over $\mathcal U^m(V_0)$. This splitting into two cases is solely justified by better readability but it is worth noting that a stronger lower bound that holds on the intersection $\mathcal U_{K_0}^m\cap\mathcal U^m(V_0) $ can also be proved and is presented as Proposition~\ref{prop:lower-stronger}.

\begin{theorem} \label{thm:lower-adaptive}
There exists a constant $c \in (0,1)$ such that for any $K_0 \ge m$, and any estimator $(\hat \Pi, \hat A)$, it holds that
\[ 
\sup_{(\Pi, A)  \in \mathfrak S_n\times \mathcal U_{K_0}^m} {\p}_{\Pi A}\Big[ \frac{1}{nm} \| \hat \Pi\hat A - \Pi A\|_F^2\gtrsim  \sigma^2 \Big( \frac{K_0}{nm} + \frac {\log l}m \Big) \Big] 
\ge c ,
\]
where $l = \min(K_0 - m, m) + 1$ and ${\p}_{\Pi A}$ is the probability distribution of $Y= \Pi A+ Z$. 
It follows that the lower bound with the same rate holds in expectation.
\end{theorem}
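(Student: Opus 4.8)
The plan is to establish the lower bound by exhibiting two separate hard subproblems, one that forces the $\sigma^2 K_0/(nm)$ term and one that forces the $\sigma^2(\log l)/m$ term, and then combining them. Since the supremum over a union is at least the supremum over either piece, it suffices to prove each bound separately (up to adjusting the constant $c$). For the first term, I would fix the permutation $\Pi = I_n$ and reduce to a pure estimation problem over $\mathcal U^m_{K_0}$: within this class one can embed a sufficiently large packing of matrices with unimodal columns taking at most $K_0/m$ distinct values per column. Concretely, partitioning each column's index set into roughly $K_0/m$ blocks and assigning each block a constant value from a well-separated grid (respecting unimodality, e.g. taking the block values themselves to be unimodal in the block index) yields a family of size $\exp(c K_0)$ that is $\sqrt{\sigma^2 K_0}$-separated in Frobenius norm. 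A standard Fano/Assouad argument, using that the Kullback--Leibler divergence between $N(\Pi A, \sigma^2 I)$ and $N(\Pi A', \sigma^2 I)$ is $\|A-A'\|_F^2/(2\sigma^2)$, then gives the $\sigma^2 K_0/(nm)$ rate; this is essentially the matrix version of the known unimodal/isotonic lower bound applied columnwise, so I expect it to be routine.

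The more delicate part is the permutation term $\sigma^2(\log l)/m$ with $l = \min(K_0 - m, m) + 1$. Here I would fix a single base matrix $A^* \in \mathcal U^m$ whose first $l$ rows are distinct and well separated --- this requires $K_0 - m \ge l - 1$ distinct extra values available in the columns, which is exactly why $l$ involves $K_0 - m$ --- and then vary only the permutation $\Pi$ over a subgroup of $\mathfrak S_n$ acting on those $l$ rows. The key quantitative point is to lower bound $\|\Pi A^* - \Pi' A^*\|_F^2$ for distinct $\Pi, \Pi'$ in terms of the number of misplaced rows times the squared row separation, while keeping the KL divergence $\|\Pi A^* - \Pi' A^*\|_F^2/(2\sigma^2)$ small enough that Fano applies. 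Choosing the $l$ distinguished rows to differ by $O(\sigma)$ in each of the $m$ coordinates makes a single transposition cost $O(\sigma^2 m)$ in squared Frobenius norm while $l!$ permutations are available, and Fano over this family of size $l!$ (so $\log$-cardinality $\asymp l \log l$) against a per-pair KL of order $m$ yields a rate $\asymp \sigma^2 (\log l)/m$ after normalizing by $nm$ and accounting for the fact that only $l$ of the $n$ rows carry signal. I would use a Varshamov--Gilbert-type selection within $\mathfrak S_l$, or alternatively a direct two-point/Assouad bound if the clean Fano packing is awkward, to control simultaneously the pairwise separation from below and the KL from above.

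The main obstacle I anticipate is the simultaneous control in the permutation argument: one needs the base matrix $A^*$ to have rows that are pairwise well separated (so that any nontrivial permutation is costly to misidentify in the loss) yet globally close enough that the testing problem is genuinely hard, and one must verify that such an $A^*$ actually lies in $\mathcal U^m$ and has $K(A^*) \le K_0$. The count $l = \min(K_0 - m, m) + 1$ reflects two competing constraints: $K_0 - m$ bounds how many distinct row-values the budget allows beyond the trivial $m$, and $m$ bounds how much separation per row we can afford while keeping the per-pair KL at $O(m)$ rather than larger. Once the right $A^*$ is fixed, the Fano computation is standard. Finally, I would combine the two lower bounds: since $\mathcal U^m_{K_0}$ contains both the columnwise packing (with $\Pi = I_n$) and the permutation family (with fixed $A^*$), the supremum of the risk over $\mathfrak S_n \times \mathcal U^m_{K_0}$ dominates the maximum of the two rates, hence their sum up to a factor of $2$, giving the claimed $\sigma^2(K_0/(nm) + (\log l)/m)$. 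The in-expectation statement follows from the in-probability statement by Markov's inequality in the standard way.
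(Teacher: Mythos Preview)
Your plan for the $\sigma^2 K_0/(nm)$ term is fine and matches the paper's Lemma~\ref{lem:lower-est}. The permutation term, however, has a real gap. By fixing a single $A^*$ with only $l$ distinguished rows and varying $\Pi$ over a copy of $\mathfrak S_l$, your hypothesis family has log-cardinality at most $l\log l$ and the signal is confined to $l$ of the $n$ rows. If the distinguished rows differ by $\delta$ in each coordinate, then a generic pair of hypotheses differs in $\asymp l$ rows, giving KL $\asymp lm\delta^2/\sigma^2$; for Fano to apply you need $lm\delta^2/\sigma^2\lesssim l\log l$, i.e.\ $\delta^2\lesssim\sigma^2(\log l)/m$, and then the Frobenius separation is only $\asymp lm\delta^2\asymp l\,\sigma^2\log l$. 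After dividing by $nm$ you obtain $\sigma^2 l(\log l)/(nm)$, short of the target $\sigma^2(\log l)/m$ by a factor $n/l$, which is large whenever $l\le m+1\ll n$. There is also a feasibility problem: rows differing in \emph{all} $m$ coordinates force each column to have at least $l$ distinct values, so $K(A^*)\ge ml$, which in general exceeds $K_0$.

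The paper's construction (Lemmas~\ref{lem:zero-one-packing} and~\ref{lem:lower-approx}) avoids both issues by spreading the signal over all $n$ rows: each row is a rescaled standard basis vector $e_{j_i}$ with $j_i\in[l]$. This yields $l^n$ candidate matrices in $\mathcal M$ and, after a packing argument, a family of size $\exp(cn\log l)$ with pairwise squared Frobenius separation $\asymp n\sigma^2\log l$; each column has only the two values $0$ and the scale, so $K=m+l\le K_0$. The point you are missing is that the permutation lower bound does \emph{not} arise by fixing $A^*$ and ranging over $\Pi$: it comes from a family in $\mathcal M$ whose sorted representatives $A$ themselves vary (through the multiplicities of the $l$ row types), and it is precisely this that boosts the family size from $l!$ to $l^n$ and recovers the missing factor $n/l$.
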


In fact, the lower bound holds for any estimator of the matrix $\Pi^* A^*$, not only those of the form $\hat \Pi \hat A$ with $\hat A \in \mathcal U^m$. The above lower bound matches the upper bound in Corollary~\ref{cor:adaptive} up to logarithmic factors. 

Note the presence of a $\log l$ factor in the second term. If $l = 1$ then $K_0 = m$ which means that each column of $A$ is simply a constant block, so $\Pi A = A$ for any $\Pi \in \mathfrak S_n$. In this case, the second term vanishes because the permutation does not play a role. More generally, the number $l-1$ can be understood as the maximal number of columns of $A$ on which the permutation does have an effect. The larger $l$, the harder the estimation. 
It is easy to check that if $l \ge n$ the second term in the lower bound will be dominated by the first term in the upper bound.

A lower bound corresponding to Corollary~\ref{cor:global} also holds:

\begin{theorem} \label{thm:lower-global}
There exists a constant $c \in (0,1)$ such that for any $V_0 \ge 0$,  and any estimator $(\hat \Pi, \hat A)$, it holds that
$$
\sup_{(\Pi, A) \in \mathfrak S_n\times \mathcal U^m(V_0)} {\p}_{\Pi A}\Big[ \frac{1}{nm} \| \hat \Pi\hat A - \Pi A\|_F^2\gtrsim \big(\frac{\sigma^2 V_0}{n}\big)^{2/3}  +  \frac {\sigma^2}n  
+ \frac {\sigma^2}m\wedge  m^2 V_0^2  \Big] 
\ge c \,,
$$
where ${\p}_{\Pi A}$ is the probability distribution of $Y= \Pi A+ Z$. 
The lower bound with the same rate also holds in expectation.
\end{theorem}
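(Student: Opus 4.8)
The plan is to decompose the claimed rate into its three summands,
\[
R_1=\Big(\frac{\sigma^2 V_0}{n}\Big)^{2/3},\qquad R_2=\frac{\sigma^2}{n},\qquad R_3=\frac{\sigma^2}{m}\wedge m^2V_0^2 ,
\]
and prove a separate lower bound for each; since $R_1+R_2+R_3\le 3\max_i R_i$ and $n,m,\sigma,V_0$ are fixed, it then suffices to invoke the family attaining $\max_iR_i$. Throughout I would work with the estimand $M^*=\Pi^*A^*$: because $\hat\Pi\hat A$ ranges over all of ${\R}^{n\times m}$ as $(\hat\Pi,\hat A)$ ranges over $\mathfrak S_n\times{\R}^{n\times m}$, the loss $\tfrac1{nm}\|\hat\Pi\hat A-\Pi^*A^*\|_F^2$ is simply $\tfrac1{nm}\|\hat M-M^*\|_F^2$ for an arbitrary estimator $\hat M$, so each step reduces to a standard minimax lower bound over a suitable subfamily of $\{\Pi A:\Pi\in\mathfrak S_n,\ A\in\mathcal U^m(V_0)\}$, which I would establish by Fano's inequality (equivalently Assouad's lemma) applied to a Varshamov--Gilbert packing. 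For the parametric term $R_2$, fix $\Pi^*=I_n$ and take the constant-column matrices $M_c=\mathbf 1_n c^\top$ with $c$ in a Varshamov--Gilbert subset of $\{0,\eps\sigma/\sqrt n\}^m$; these have unimodal columns of variation $0$, hence lie in $\mathcal U^m(V_0)$ for every $V_0$. Here $\|M_c-M_{c'}\|_F^2=\eps^2\sigma^2$ times the Hamming distance, so the pairwise Kullback--Leibler divergence is at most $\tfrac{\eps^2}{2}m$ while the log-cardinality is $\asymp m$; for $\eps$ a small absolute constant Fano gives a squared error of order $\eps^2\sigma^2 m$, i.e.\ of order $\sigma^2/n$ per entry, with constant probability. (Intuitively this is the cost of estimating the $m$ column baselines, each from $n$ noisy entries, and it does not involve the permutation.)

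For the shape-constraint term $R_1$, I would again fix $\Pi^*=I_n$ and, in the nondegenerate regime $\sigma/\sqrt n\lesssim V_0\lesssim\sigma n$ (when $V_0\lesssim\sigma/\sqrt n$ the term is absorbed by $R_2$, and $V_0\gtrsim\sigma n$ is handled by fully perturbed columns), set $k=\lceil(nV_0^2/\sigma^2)^{1/3}\rceil\in[1,n]$. For $\eta\in\{0,1\}^{km}$, let $A_\eta\in\mathcal S^m$ have each column constant on $k$ consecutive blocks of length $\asymp n/k$, the value on block $b$ of column $j$ being $\tfrac{V_0}{2k}(2b-2+\eta_{(j-1)k+b})$, so each column is increasing with variation $<V_0$ and $V(A_\eta)<V_0$. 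Then $\|A_\eta-A_{\eta'}\|_F^2\asymp\tfrac{nV_0^2}{k^3}$ times the Hamming distance, so on a Varshamov--Gilbert subset of $\{0,1\}^{km}$ (cardinality $2^{ckm}$, pairwise Hamming $\gtrsim km$) the squared packing radius is $\gtrsim nmV_0^2/k^2\asymp nm(\sigma^2V_0/n)^{2/3}$ while the maximal Kullback--Leibler divergence is $\lesssim nmV_0^2/(\sigma^2k^2)\lesssim km$ by the choice of $k$; inserting a small absolute constant into the perturbation scale makes the Fano condition hold and yields $\tfrac1{nm}\|\hat M-A_\eta\|_F^2\gtrsim(\sigma^2V_0/n)^{2/3}$ with constant probability. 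This is essentially the matrix form of the classical isotonic lower bound \eqref{eq:isotonic-global}, and the $2/3$-averaging in the definition of $V(\cdot)$ is precisely what keeps every column at variation $V_0$ inside $\mathcal U^m(V_0)$.

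For the permutation term $R_3$ — the only step that genuinely exploits non-identifiability of $\Pi^*$ — I would put $\beta=\eps(\sigma\wedge m^{3/2}V_0)$, let $A^*\in\mathcal S^m$ consist of $\lceil n/2\rceil$ rows equal to $0$ followed by $\lfloor n/2\rfloor$ rows equal to $\beta e_1$ (only the first column is non-constant, of variation $\beta$, so $V(A^*)=\beta m^{-3/2}\le V_0$ and $A^*\in\mathcal U^m(V_0)$), and for $\eta\in\{0,1\}^{\lfloor n/2\rfloor}$ let $\Pi_\eta\in\mathfrak S_n$ interleave the zero-rows and the $\beta e_1$-rows pair by pair, swapping the $t$-th pair iff $\eta_t=1$; set $M_\eta=\Pi_\eta A^*$. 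Then $\|M_\eta-M_{\eta'}\|_F^2=2\beta^2$ times the Hamming distance, hence the pairwise Kullback--Leibler divergence is $(\beta^2/\sigma^2)$ times the Hamming distance and is $\le\eps^2 n$, while a Varshamov--Gilbert subset of $\{0,1\}^{\lfloor n/2\rfloor}$ has log-cardinality and pairwise Hamming distance both $\asymp n$; for small $\eps$ Fano gives a squared error $\gtrsim n\beta^2$, i.e.\ $\gtrsim\beta^2/m=\eps^2(\sigma^2/m\wedge m^2V_0^2)$ per entry, with constant probability. The reason one column can carry variation up to $m^{3/2}V_0$ — hence the reason the cap is $m^2V_0^2$ rather than $V_0^2$ — is once more the $2/3$-averaging in $V(\cdot)$, which is also why $\mathcal U^m(V_0)$ has squared Frobenius diameter $\asymp nm\cdot m^2V_0^2$.

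Combining the three displays via $R_1+R_2+R_3\le3\max_iR_i$ finishes the in-probability bound, and Markov's inequality upgrades it to expectation. I expect the second step to be the crux: matching the $2/3$ exponent forces one to balance the block count $k$ against the Fano trade-off between packing cardinality and maximal divergence, and the degenerate ranges $V_0\lesssim\sigma/\sqrt n$ and $V_0\gtrsim\sigma n$ must be peeled off by hand. A secondary point requiring care is checking that the interleaving permutations $\Pi_\eta$ together with $A^*$ genuinely realize elements of $\mathfrak S_n\times\mathcal U^m(V_0)$, and that a Fano ``error'' on the packing translates into the desired bound on $\tfrac1{nm}\|\hat\Pi\hat A-\Pi^*A^*\|_F^2$ — which it does, since $\Pi^*A^*$ is exactly the estimand and the three families are genuinely hard.
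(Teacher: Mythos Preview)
Your proposal is correct and follows essentially the same route as the paper: split the rate into the three terms $R_1,R_2,R_3$, build a Varshamov--Gilbert packing for each (constant columns for $R_2$, piecewise-constant staircase columns with $k\asymp(nV_0^2/\sigma^2)^{1/3}$ pieces for $R_1$, a single non-constant column of amplitude $\asymp\sigma\wedge m^{3/2}V_0$ for $R_3$), and apply Fano/Tsybakov. The paper packages the $R_1$ step via its Lemma~\ref{lem:lower-est} (stated in terms of $K_0$, then specialized to $K_0\asymp m(nV_0^2/\sigma^2)^{1/3}$) and handles $R_3$ by directly constructing matrices $M^\omega$ with first column $\alpha\omega$, $\omega\in\{0,1\}^n$, rather than your pair-swapping permutations of a fixed $A^*$, but these are cosmetic differences.
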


There is a slight mismatch between the upper bound of Corollary~\ref{cor:global} and the lower bound of Theorem~\ref{thm:lower-global} above. Indeed the lower bound features a term $\frac {\sigma^2} m \wedge  m^2 V_0^2$ instead of just $\frac{\sigma^2}m$. In the regime $m^2 V_0^2 < \frac{\sigma^2}m$, where $A$ has very small variation, the LS estimator may not be optimal. Proposition~\ref{prop:special-regime} indicates that a matrix with constant columns obtained by averaging achieves optimality in this extreme regime.


\section{Further results in the monotone case}
\label{sec:rankscore}
A particularly interesting subset of unimodal matrices is $\mathcal S^m$, the set of $n\times m$ matrices with monotonically increasing columns. While it does not amount to the seriation problem in its full generality, this special case is of prime importance in the context of shape constrained estimation as illustrated by the  discussion and references in Section~\ref{sec:related-work}. In fact, it covers the example of bipartite ranking discussed at the end of Section~\ref{sec:related-work}. In the rest of this section, we devote further investigation to this important case. To that end, consider the model \eqref{eq:model} 
where we further assume that $A^* \in \mathcal S^m$. We refer to this model as the \emph{monotone seriation model}.
In this context, define the LS estimator by
\[ (\hat \Pi, \hat A) \in \operatorname*{argmin}_{(\Pi, A) \in \mathfrak S_n \times \mathcal S^m} \|Y - \Pi A\|_F^2 \,. \]
Since  $\mathcal S^m$ is a convex subset of $\mathcal U^m$, it is easily seen that the upper bounds in Theorem~\ref{thm:adaptive} and \ref{thm:global} remain valid in this case.  The lower bounds of Theorem~\ref{thm:lower-adaptive} (with $\log l$ replaced by $1$) and Theorem~\ref{thm:lower-global} also extend to this case; see Appendix~\ref{sec:lower}.

Although for unimodal matrices the established error bounds do not imply any bounds on estimation of $A^*$ or $\Pi^*$ in general, for the monotonic case, however, Lemma~\ref{lem:rearrange} yields that
\[ \|\hat A - A^*\|_F^2\vee \frac{1}{4}\|(\hat \Pi-\Pi^*)A^*\|_F^2 \le \|\hat \Pi \hat A - \Pi^* A^*\|_F^2 \,.\] 
so that the LS estimator $(\hat \Pi, \hat A)$ also leads to good individual estimators of $\Pi^*$ and $A^*$ respectively.

\medskip

Because it requires optimizing over a union of $n!$ cones $\Pi \mathcal S^m$, no  efficient way of computing the  LS estimator is known since. As an alternative, we describe a simple and efficient algorithm to estimate $(\Pi^*, A^*)$ and study its rate of estimation.

%

Let $K(A)$ and $V(A)$ be defined as before. Moreover, for a matrix $A \in \mathcal S^m$, let $\mathcal J$ denote the set of pairs of indices $(i,j) \in [n]^2$ such that $A_{i,\cdot}$ and $A_{j,\cdot}$ are not identical. Define the quantity $R(A)$ by
\begin{equation} \label{eq:r}
R(A) = \frac 1n \max_{\substack{\mathcal I \subset [n]^2\\ |\mathcal I| = n}} \sum_{(i, j) \in \mathcal I \cap \mathcal J} \Big( \frac{\|A_{i,\cdot} - A_{j,\cdot}\|_2^2}{\|A_{i,\cdot} - A_{j,\cdot}\|_\infty^2}\wedge \frac{m \|A_{i,\cdot} - A_{j,\cdot}\|_2^2}{\|A_{i,\cdot} - A_{j,\cdot}\|_1^2} \Big) \,.
\end{equation}
It can be shown (see Appendix~\ref{sec:monotone}) that $1\le R(A) \le \sqrt{m}$. Intuitively, the quantity $R(A)$ is small if the difference $u$ of any two rows of $A$ is either very sparse ($\|u\|_2/\|u\|_\infty$ is small) or very dense ($m\|u\|_2/\|u\|_1$ is small). Indeed, for any nonzero vector $u \in {\R}^m$, $\|u\|_2^2/\|u\|_\infty^2 \ge 1$ with equality achieved when $\|u\|_0 = 1$, and $m\|u\|_2^2/\|u\|_1^2 \ge 1$ with equality achieved when all entries of $u$ are the same. 

For matrices with small $R(\cdot)$ values, it is possible to aggregate the information across each row to learn the unknown permutation $\Pi^*$ in a simple fashion.  Recovering the permutation $\Pi^*$,   is equivalent to ordering (or ranking reversely) the rows of $\Pi^*A^*$ from their noisy version $Y$. 

One simple method to achieve this goal, which we call \RankSum, is to permute the rows of $Y$ so that they have increasing row sums.
However, it is easy to observe that this method fails if
\begin{equation}\label{eq:specialA}
A^* =
\begin{bmatrix}
\sqrt{m}& 0 & \dots & 0 \\
2\sqrt{m} & 0 & \dots & 0 \\
\vdots & \vdots& & \vdots \\
n\sqrt m & 0 & \dots & 0
\end{bmatrix}
\end{equation}
where $A^*_{i,1} = i\sqrt m$ and entries of $Z$ are i.i.d. standard Gaussian variables,
because the sum of noise in a row has order $\sqrt m$ which is no less than the gaps between row sums of $A^*$. In fact, $R(A^*) = 1$ and it should be easy to distinguish the two types of rows of $A^*$, for example, by looking at the first entry of a row. This motivates us to consider the following method called \RankScore. 

For $i, i' \in [n]$, define
\[
\Delta_{A^*}(i, i') = \max_{j \in [m]} (A^*_{i', j} - A^*_{i, j})\vee \frac 1{\sqrt m} \sum_{j=1}^m (A^*_{i', j} - A^*_{i, j}) 
\]
and define $\Delta_Y(i, i')$ analogously. 
The \RankScore\ procedure is defined as follows:
\begin{enumerate}
\item For each $i \in [n]$, define the score $s_i$ of the $i$-th row of $Y$ by
$$
s_i =  \sum_{l=1}^n \1(\Delta_Y(l, i) \ge 2\tau) 
$$
where $\tau := C \sigma \sqrt{\log(nm)}$ for some tuning constant $C$ (see Appendix~\ref{sec:monotone} for more details). 
\item Then order the rows of $Y$ so that their scores are increasing, with ties broken arbitrarily. 
\end{enumerate}

The \RankScore\ procedure recovers an order of the rows of $Y$, which leads to an estimator $\tilde \Pi$ of the permutation. Then we define $\tilde A \in \mathcal S^m$ so that $\tilde \Pi \tilde A$ is the projection of $Y$ onto the convex cone $\tilde \Pi \mathcal S^m$. The estimator $(\tilde \Pi, \tilde A)$
enjoys the following rate of estimation.

\begin{theorem} \label{thm:monotone}
For $A^* \in \mathcal S^m$ and $Y = \Pi^* A^* + Z$, let $(\tilde \Pi, \tilde A)$ be the estimator defined above using the \RankScore\ procedure with threshold $\tau = 3 \sigma \sqrt{(C+1)\log(nm)}$, $C>0$.
Then it holds that
\begin{align*}
\frac{1}{nm} \|\tilde \Pi\tilde A - \Pi^*A^*\|_F^2 
\lesssim \min_{A \in \mathcal S^m} \Big(&\frac 1{nm}\|A - A^*\|_F^2  + \sigma^2 \frac{K(A)}{nm} \log \frac{enm}{K(A)} \Big) \\
&+  (C+1) \sigma^2 \frac{R(A^*) \log(nm)}{m}\,,
\end{align*}
with probability at least $1- e^{-c(n+m)} -(nm)^{-C}$ for some constant $c>0$.
\end{theorem}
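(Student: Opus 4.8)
\emph{Overview and reduction.} Recall that $\tilde\Pi\tilde A$ is the Euclidean projection of $Y$ onto the closed convex cone $\tilde\Pi\mathcal S^m$; writing $P$ for the Euclidean projection onto $\mathcal S^m$ and using that a permutation acts as an isometry, this means $\tilde A=P(\tilde\Pi^{-1}Y)$ and
\[
\tilde\Pi^{-1}Y \;=\; B + W,\qquad B:=\tilde\Pi^{-1}\Pi^*A^*,\quad W:=\tilde\Pi^{-1}Z\sim\subG(\sigma^2),
\]
so that $\|\tilde\Pi\tilde A-\Pi^*A^*\|_F=\|\tilde A-B\|_F$ and $\|B-A^*\|_F=\|(\Pi^*-\tilde\Pi)A^*\|_F$. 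Since $P$ is $1$-Lipschitz,
\[
\|\tilde A-B\|_F\le\|P(B+W)-P(A^*+W)\|_F+\|P(A^*+W)-A^*\|_F+\|A^*-B\|_F\le 2\|(\Pi^*-\tilde\Pi)A^*\|_F+\|P(A^*+W)-A^*\|_F .
\]
The plan is to bound the ``permutation error'' $\|(\Pi^*-\tilde\Pi)A^*\|_F^2$ using the structure of the \RankScore\ ordering, and the ``estimation error'' $\|P(A^*+W)-A^*\|_F^2$ using the oracle inequality for (matrix) isotonic regression.

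\emph{Estimation error.} For a \emph{fixed} permutation $\Pi$, one has $\Pi^{-1}Z\sim\subG(\sigma^2)$ while $A^*\in\mathcal S^m$ is deterministic, so the metric-entropy arguments behind Theorem~\ref{thm:adaptive}, specialized to the single cone $\mathcal S^m$ (the ``known permutation'' case) and run at deviation level $\xi$, give
\[
\|P(A^*+\Pi^{-1}Z)-A^*\|_F^2\ \lesssim\ \min_{A\in\mathcal S^m}\Big(\|A-A^*\|_F^2+\sigma^2K(A)\log\tfrac{enm}{K(A)}\Big)+\sigma^2\xi
\]
with probability at least $1-e^{-\xi}$. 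Taking a union bound over the $n!$ possible values of $\tilde\Pi$ with $\xi\asymp n\log n+(n+m)$ (so that $n!\,e^{-\xi}\le e^{-c(n+m)}$) yields the same bound for $\|P(A^*+W)-A^*\|_F^2$, now with an additive overhead $\sigma^2(n\log n+m)$, with probability at least $1-e^{-c(n+m)}$. After dividing by $nm$, the overhead $\sigma^2m/(nm)=\sigma^2/n$ is absorbed into the oracle term (which is $\gtrsim\sigma^2\log(en)/n$ because $K(A)\ge m$) and $\sigma^2n\log n/(nm)=\sigma^2\log n/m$ is absorbed into the target remainder since $R(A^*)\ge1$ and $\log n\le\log(nm)$.

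\emph{Permutation error.} Let $a^*_1\preceq\cdots\preceq a^*_n$ be the entrywise ordered rows of $A^*$, and let $\rho^*$ be the bijection with $Y_{i,\cdot}=a^*_{\rho^*(i)}+Z_{i,\cdot}$; call $\rho^*(i)$ the \emph{true rank} of row $i$. Define the event $\mathcal E_1=\{|\Delta_Y(l,i)-\Delta_{A^*}(\rho^*(l),\rho^*(i))|\le\tau\ \text{for all}\ l,i\in[n]\}$. Because $\Delta_Y$ and $\Delta_{A^*}$ differ only through $\max_j$ and $\tfrac1{\sqrt m}\sum_j$ applied to the noise differences $Z_{i,\cdot}-Z_{l,\cdot}$, each of which is coordinatewise $\subG(2\sigma^2)$, a sub-Gaussian tail bound and a union bound over all pairs and columns give $\p(\mathcal E_1)\ge1-(nm)^{-C}$ for $\tau=3\sigma\sqrt{(C+1)\log(nm)}$. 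On $\mathcal E_1$ the score satisfies $\#\{l:\Delta_{A^*}(\rho^*(l),\rho^*(i))\ge3\tau\}\le s_i\le\#\{l:\Delta_{A^*}(\rho^*(l),\rho^*(i))\ge\tau\}$. The key claim is: if two rows have true ranks $a<b$ with $\Delta_{A^*}(a,b)\ge3\tau$, then $s<s'$, so \RankScore\ places the rank-$a$ row first. Indeed, by entrywise monotonicity $\Delta_{A^*}(c,b)\ge\Delta_{A^*}(a,b)\ge3\tau$ for every $c\le a$, whence $s'\ge a$; while $\Delta_{A^*}(c,a)\ge\tau>0$ forces $c<a$, whence $s\le a-1<a\le s'$. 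Thus every ``inversion'' produced by \RankScore\ (a rank-$b$ row placed before a rank-$a$ row with $a<b$) satisfies $\Delta_{A^*}(a,b)<3\tau$, independently of how ties are broken. Now let $\gamma(k)$ be the true rank of the row placed in position $k$, so that $\|(\Pi^*-\tilde\Pi)A^*\|_F^2=\sum_k\|a^*_k-a^*_{\gamma(k)}\|_2^2$. For each $k$ with $\gamma(k)\ne k$, a pigeonhole argument on the $k$-element set $\{\gamma(1),\dots,\gamma(k)\}\subset[n]$ produces true ranks $a\le\min(k,\gamma(k))$ and $b\ge\max(k,\gamma(k))$ that are inverted by \RankScore; hence $\Delta_{A^*}(a,b)<3\tau$, and since $a^*_a\preceq a^*_k,a^*_{\gamma(k)}\preceq a^*_b$ entrywise, the vector $u^{(k)}:=a^*_k-a^*_{\gamma(k)}$ obeys $\|u^{(k)}\|_\infty\le\|a^*_b-a^*_a\|_\infty\le\Delta_{A^*}(a,b)<3\tau$ and likewise $\tfrac1{\sqrt m}\|u^{(k)}\|_1<3\tau$. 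Therefore $\|u^{(k)}\|_2^2<9\tau^2\big(\|u^{(k)}\|_2^2/\|u^{(k)}\|_\infty^2\wedge m\|u^{(k)}\|_2^2/\|u^{(k)}\|_1^2\big)$, and summing over $k$ while noting that $\{(k,\gamma(k)):k\in[n]\}$ is an admissible index set in the definition~\eqref{eq:r} of $R(A^*)$ (terms with $a^*_k=a^*_{\gamma(k)}$ contributing zero), we get $\|(\Pi^*-\tilde\Pi)A^*\|_F^2<9\tau^2\,nR(A^*)\asymp(C+1)\sigma^2\,nR(A^*)\log(nm)$. Dividing by $nm$ gives the stated remainder term, and combining the two bounds on $\mathcal E_1$ intersected with the estimation-error good event yields the theorem with probability at least $1-e^{-c(n+m)}-(nm)^{-C}$.

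\emph{Main obstacle.} The genuinely delicate part is the \RankScore\ analysis: the threshold calibration ($2\tau$ in the algorithm, $\tau=3\sigma\sqrt{(C+1)\log(nm)}$ controlling the noise, and the factor-$3$ separation that makes the ``no spurious inversion'' claim robust to arbitrary tie-breaking), together with the passage from the $\ell_\infty$ and $\ell_1/\sqrt m$ bounds on the inverted row differences to the combinatorial quantity $R(A^*)$ of~\eqref{eq:r} — this last computation is exactly what motivates the unusual form of $R$. The data-dependence of $\tilde\Pi$ also forces the crude union bound over all $n!$ permutations in the estimation step; this is harmless precisely because $K(A)\ge m$ and $R(A^*)\ge1$ absorb the resulting $\sigma^2(n\log n+m)/(nm)$ overhead into the two stated terms.
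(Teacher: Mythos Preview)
Your proof is correct and follows the paper's two-step strategy (Lemmas~\ref{lem:approx-est} and~\ref{lem:approx-error}): decompose the error into an estimation term and a permutation term, bound the former by an oracle inequality on $\mathcal S^m$, and bound the latter by showing that \RankScore\ only inverts rows whose $\Delta_{A^*}$-gap is $O(\tau)$, then feed this into the definition of $R(A^*)$.

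The technical execution differs in a few minor but interesting ways. For the estimation step, you reduce to known-permutation isotonic regression via the $1$-Lipschitz property of the projection $P$ and then take an explicit union bound over the $n!$ possible values of $\tilde\Pi$, absorbing the resulting $\sigma^2 n\log n$ overhead into the $R(A^*)$ term; the paper instead applies Chatterjee's variational formula directly to the cone $\tilde\Pi\mathcal S^m$ in Lemma~\ref{lem:approx-est}, tacitly sidestepping the data-dependence of $\tilde\Pi$ (your treatment is arguably more careful on this point). For the permutation step, you locate an auxiliary inversion $(a,b)$ sandwiching each displaced pair $(k,\gamma(k))$ via pigeonhole and transfer the $\ell_\infty$ and $\ell_1/\sqrt m$ bounds from $a^*_b-a^*_a$ to $u^{(k)}$; the paper (Lemma~\ref{lem:approx-error}) argues directly by contradiction that $|\Delta_{A^*}(i,\nu(i))|\le 4\tau$ for $\nu=\tilde\pi^{-1}\circ\pi^*$. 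Your inversion threshold is $3\tau$ rather than the paper's $4\tau$ because you bound the noise difference $Z_{i,\cdot}-Z_{l,\cdot}$ as $\subG(2\sigma^2)$ in one shot, while the paper bounds each $Z_{i,\cdot}$ separately and loses a factor via the triangle inequality. Both routes yield the same final bound.
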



The quantity $R(A^*)$ only depends on the matrix $A^*$. If $R(A^*)$ is bounded logarithmically, the estimator $(\tilde \Pi, \tilde A)$ achieves the minimax rate up to logarithmic factors. In any case, $R(A^*) \le \sqrt m$, so the estimator is still consistent with the permutation error (the last term) decaying at a rate no slower than $\tilde O(\frac{1}{\sqrt m})$. Furthermore, it is worth noting that $R(A^*)$ is not needed to construct $(\tilde \Pi, \tilde A)$, so the estimator adapts to $R(A^*)$ automatically.

\begin{remark}
In the same way that Theorem~\ref{thm:global} follows from Theorem~\ref{thm:adaptive}, we can deduce from Theorem~\ref{thm:monotone}
a global bound for the estimator $(\tilde \Pi, \tilde A)$ which has rate
\[ \Big(\frac{\sigma^2 V(A^*)\log n}{n}\Big)^{2/3} + \sigma^2 \Big(\frac{\log n}n + R(A^*)  \frac{\log(nm)}{m} \Big)\,. \]
\end{remark}

We conclude this section with a numerical comparison between the \RankSum\ and \RankScore\ procedures.

Consider the model \eqref{eq:model} with $A^*\in \mathcal S^m$ and assume without loss of generality that $\Pi^*=I_n$. For various $n \times m$ matrices $A^*$, we generate observations $Y=A^*+Z$ where entries of $Z$ are  i.i.d. standard Gaussian variables. 
The performance of the estimators given by \RankScore\ and \RankSum\ defined above is compared to the performance of the oracle $\hat A^{\mathrm{oracle}}$ defined by the projection of $Y$ onto the cone $\mathcal S^m$. 
For the \RankScore\ estimator we take $\tau=6$. The curves are generated based on $30$ equally spaced points on the base-$10$ logarithmic scale, and all results are averaged over $10$ replications. The vertical axis represents the estimation error of an estimator $\hat \Pi\hat A$, measured by the sample mean of  $\log_{10} \big(\frac{1}{nm}\| \hat \Pi \hat A- A^* \|_F^2 \big)$ unless otherwise specified.

\begin{figure}[ht]
\centering
\begin{minipage}[c]{.49\linewidth}
\includegraphics[width=\linewidth]{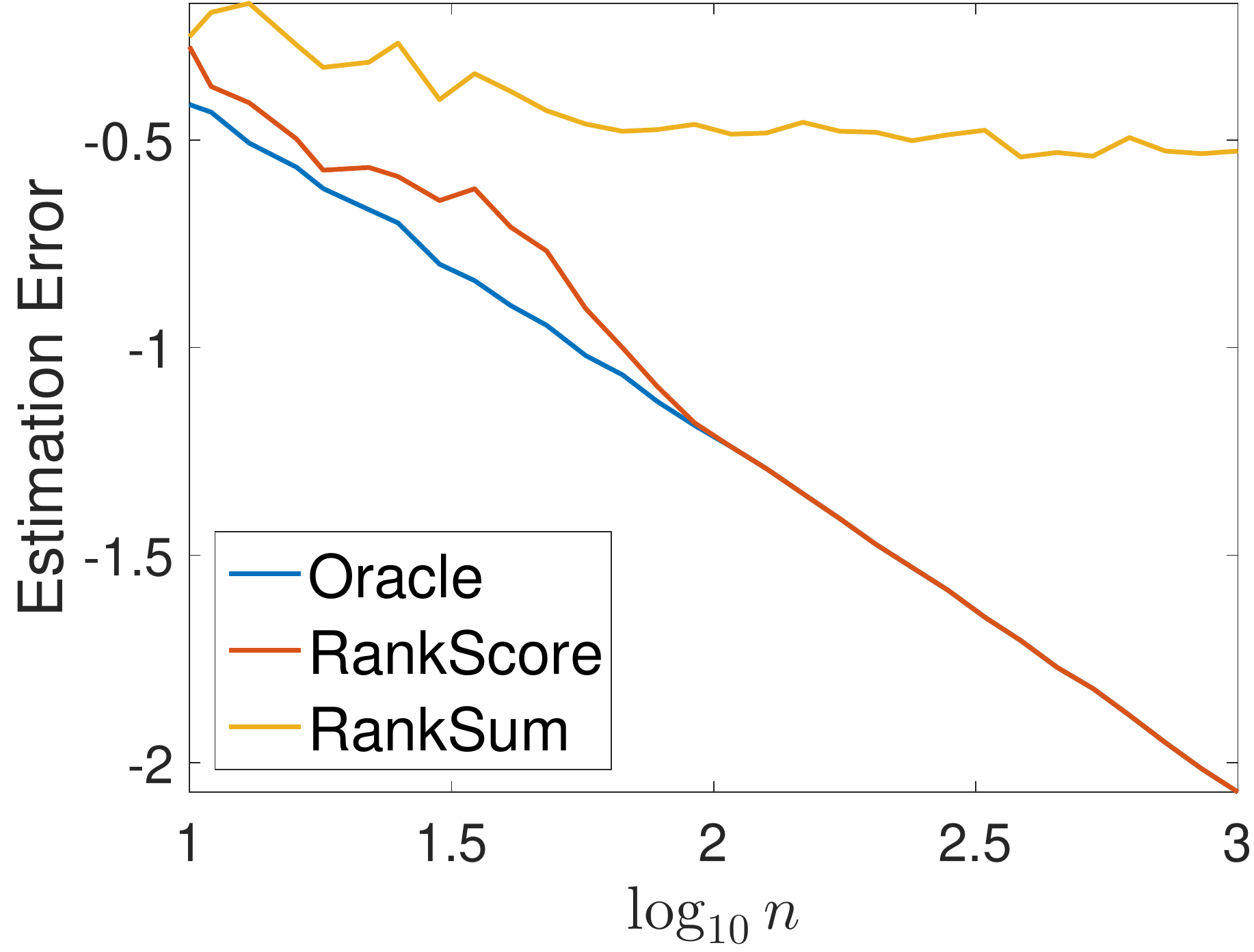}
\end{minipage} 
\begin{minipage}[c]{.49\linewidth}
\includegraphics[width=\linewidth]{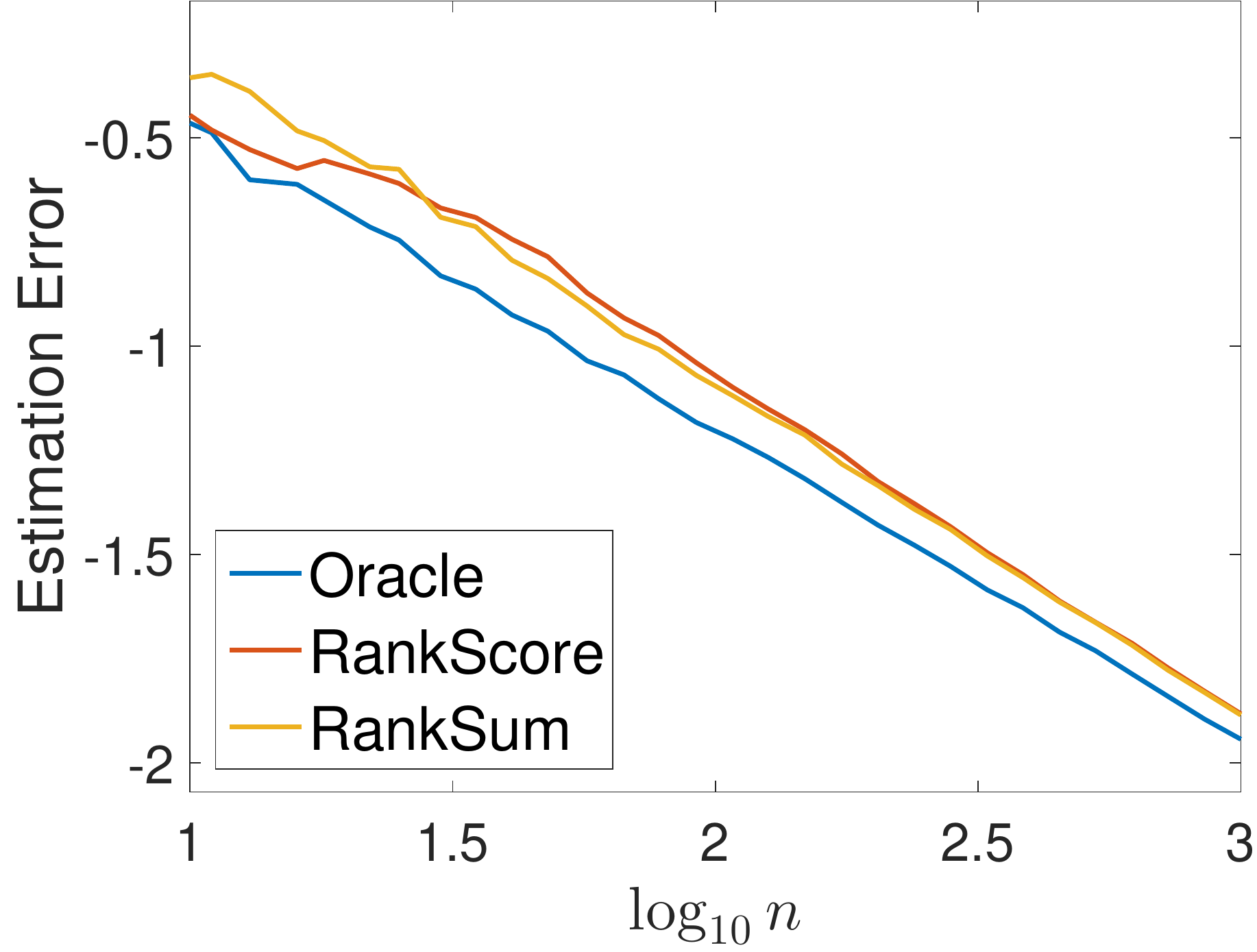}
\end{minipage} 
\caption{Estimation errors of three estimators for two deterministic $A^*$ of size $n\times n$. Left: rows of $A^*$ are $1$-sparse; Right: columns of $A^*$ are identical.}
\label{fig:plot1}
\end{figure}

We begin with two simple examples for which we set $n=m$. 
In the left plot of Figure~\ref{fig:plot1}, $A^*$ is  defined  as in \eqref{eq:specialA}. As expected, \RankSum\ fails to estimate the true permutation and performs very poorly. 
On the other hand, \RankScore\ succeeds in recovering the correct permutation and has roughly the same performance as the oracle. Because the difference of any two rows of $A^*$ is $1$-sparse, $R(A^*) = 1$ according to \eqref{eq:r} and the discussion thereafter. Hence, Theorem~\ref{thm:monotone} predicts the fast rate, which is verified by the experiment. The right plot illustrates another extreme case; more precisely, we set $A^*$ to be the matrix with all $m$ columns equal to $\frac{1}{n}(1,\cdots, n)^\top$. The difference of any two rows of $A^*$ is constant across all entries, so again we have $R(A^*) = 1$ by \eqref{eq:r}. Thus \RankScore\ achieves the fast rate as expected. Note that \RankSum\ also performs well in this case.

\begin{figure}[ht]
\centering
\begin{minipage}[c]{.49\linewidth}
\includegraphics[width=\linewidth]{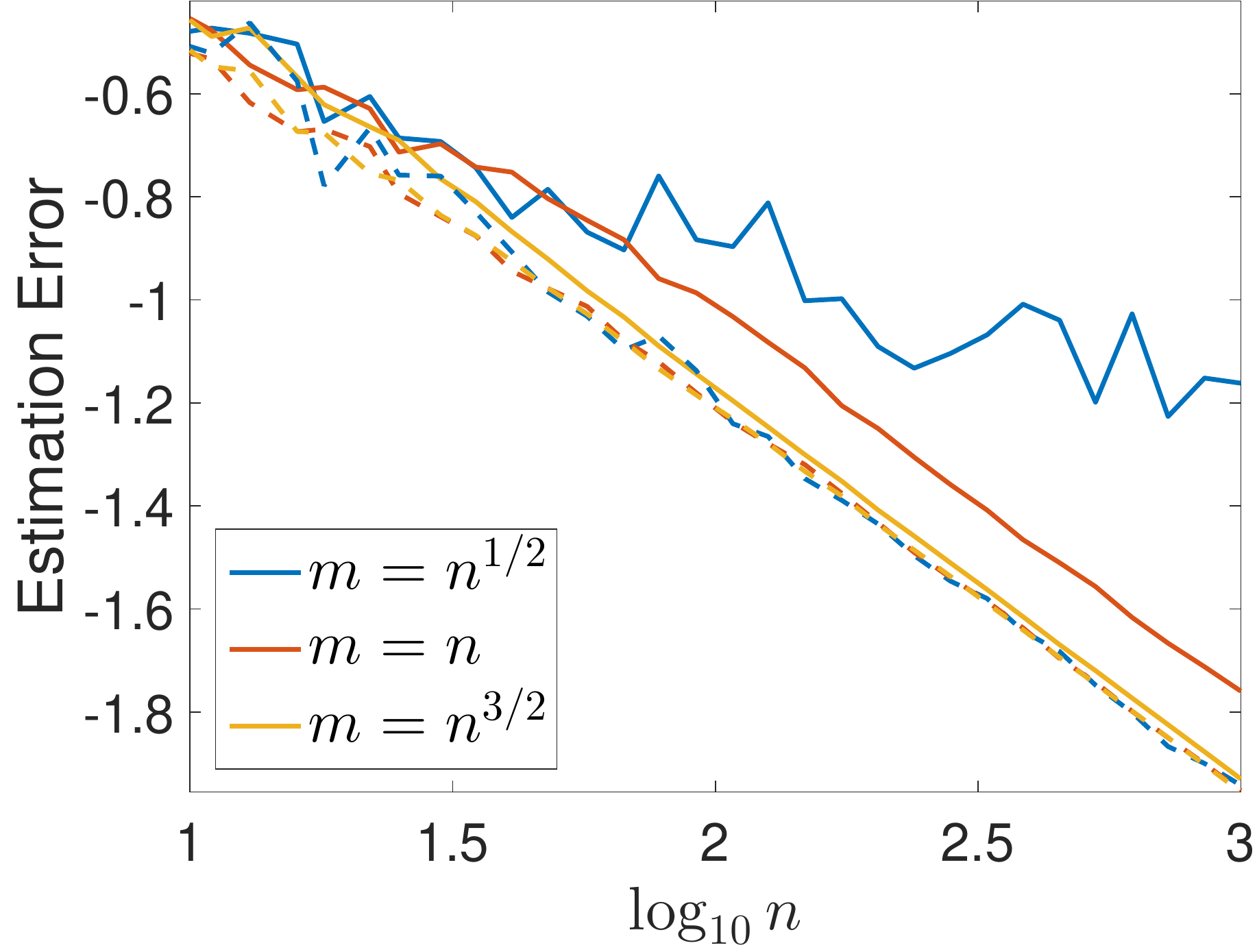}
\end{minipage}
\begin{minipage}[c]{.49\linewidth}
\includegraphics[width=\linewidth]{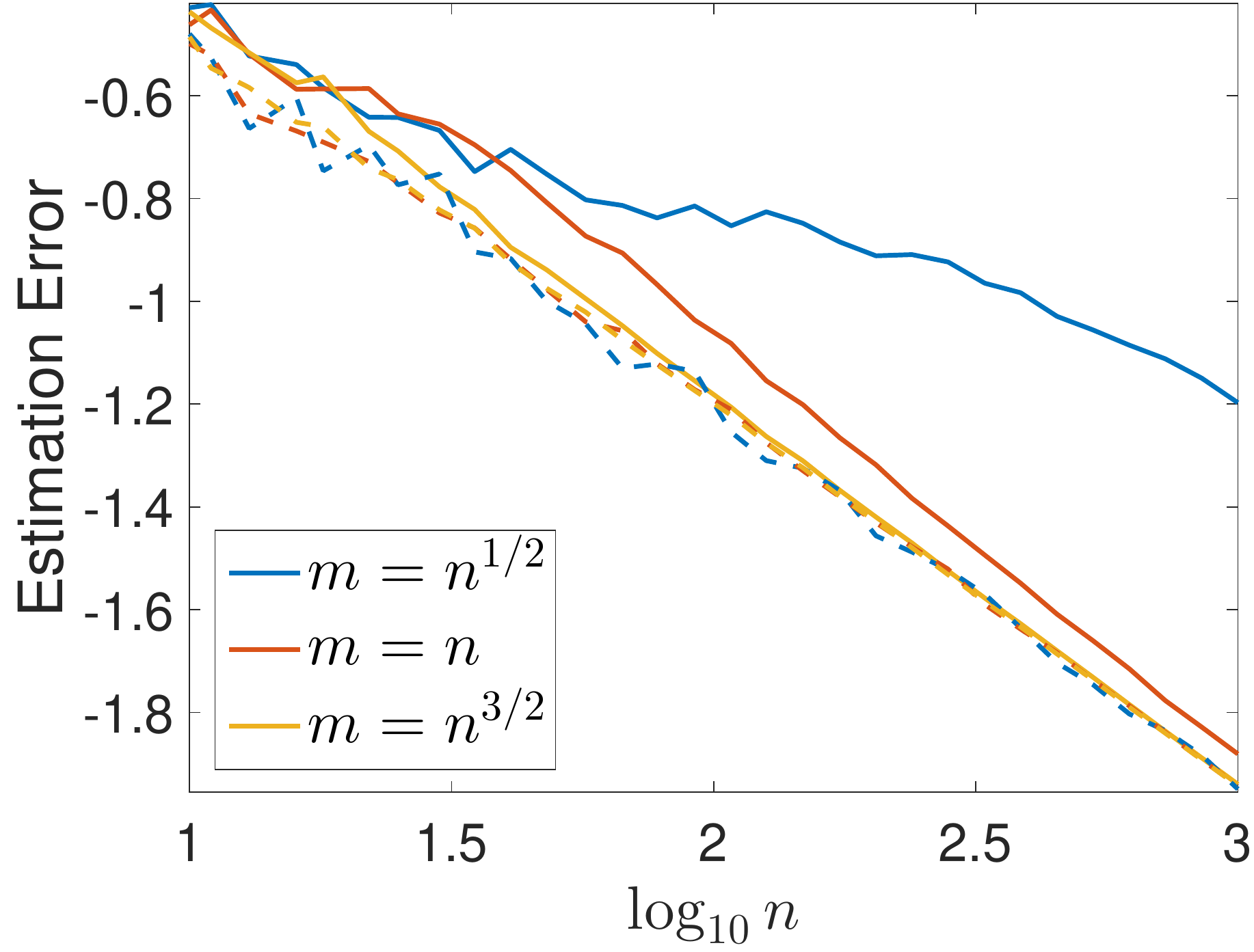}
\end{minipage} 
\caption{Estimation errors of the oracle (dashed lines) and \RankScore\ (solid lines) for different regimes of $(n, m)$ and  randomly generated $A^*$ of size $n \times m$. Left: $K(A^*)=5m$; Right: $V(A^*) \le 1$.}
\label{fig:plot2}
\end{figure}

In Figure~\ref{fig:plot2}, we compare the performance of \RankScore\ to that of the oracle in three regimes of $(n, m)$. The matrices $A^*$ are randomly generated for different values of $n$ and $m$ as follows. 
For the right plot, $A^*$ is generated so that $V(A^*) \le 1$, by sorting the columns of a matrix with i.i.d. $U(0,1)$ entries. 
For the left plot, we further require that $K(A^*)=5m$ by uniformly partitioning each column of $A^*$ into five blocks and assigning each block the corresponding value from a sorted sample of five i.i.d. $U(0,1)$ variables.

Since the oracle knows the true permutation, its behavior is independent of $m$, and its rates of estimation are bounded by $\frac{\log n}{n}$ for $K(A^*) = 5m$ and $(\frac{\log n}{n})^{\frac 23}$ for $V(A^*) = 1$ respectively by Theorem~\ref{thm:adaptive} and \ref{thm:global}. (The difference is minor in the plots as $n$ is not sufficiently large). For \RankScore, the permutation term dominates the estimation term when $m=n^{1/2}$ by Theorem~\ref{thm:monotone}. From the plots, the rates of estimation are better than $\tilde O(n^{-1/4})$ predicted by the worst-case analysis in both examples. For $m=n$, we also observe rates of estimation faster than the worst-case rate $\tilde O(n^{-1/2})$ and close to the oracle rates.
We could explain this phenomenon by $R(A^*)<\sqrt m$, but such an interpretation may not be optimal since our analysis is based on worst-case deterministic $A^*$. Potential study of random designs of $A^*$ is left open. 
Finally, for $m=n^{3/2}$, the permutation term is of order $\tilde O(n^{-3/4})$ theoretically, in between of the oracle rates for the two cases. Indeed \RankScore\ has almost the same performance as the oracle experimentally. Overall Figure~\ref{fig:plot2} illustrates the good behavior of \RankScore\ in this random scenario.

\medskip

\begin{figure}[ht]
\centering
\includegraphics[width=.6\linewidth]{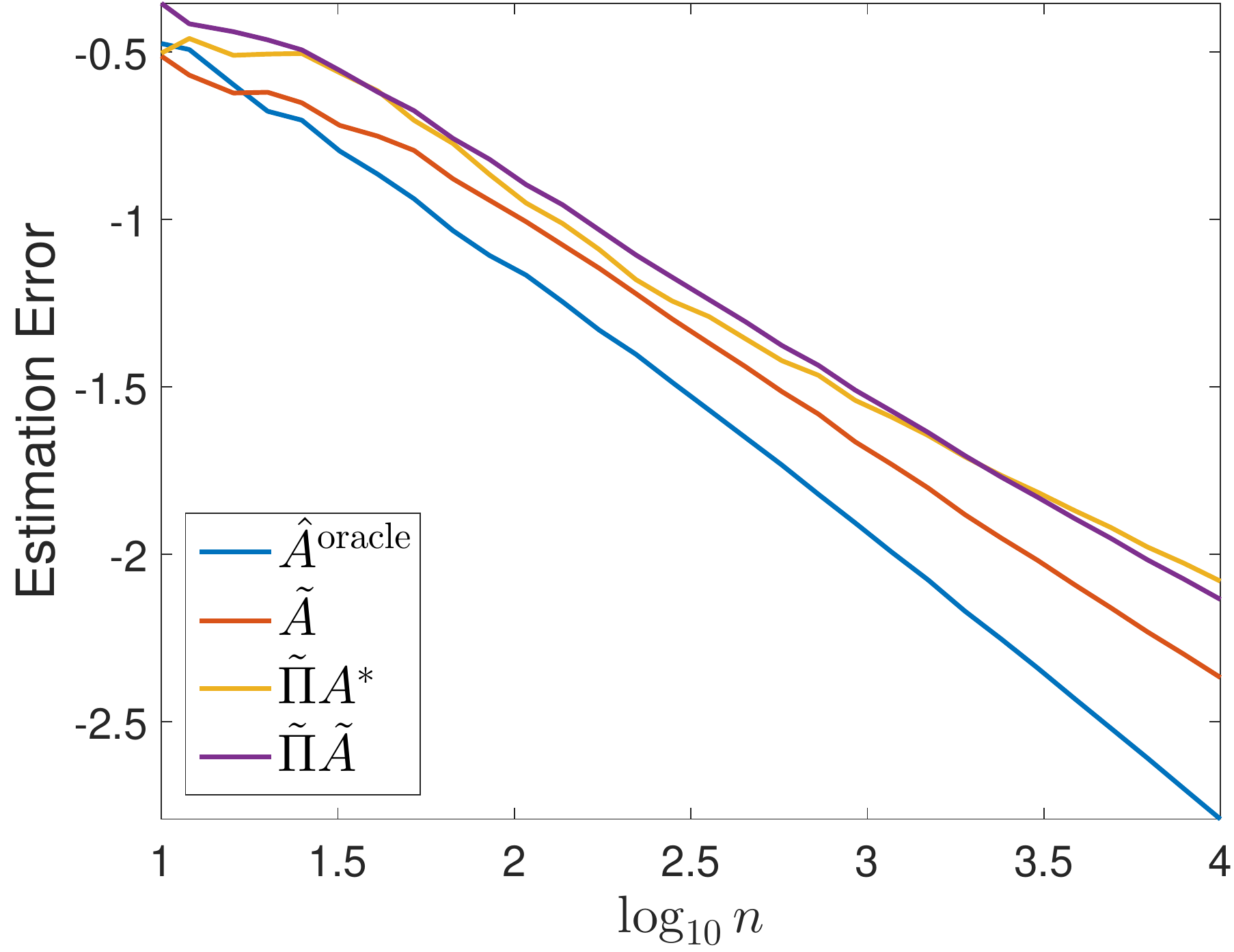}  
\caption{Various estimation errors of the oracle and \RankScore\ for the triangular matrix.}
\label{fig:plot3}
\end{figure}

To conclude our numerical experiments, we consider the $n \times n$ lower triangular matrix $A^*$ defined by $A^*_{i,j}=\1(i \ge j)$. For this matrix, it is easy to check that
$K(A^*)=2n-1$
and $R(A^*)\approx \sqrt n$. We plot  in Figure~\ref{fig:plot3} the estimation errors of $\tilde \Pi \tilde A$, $\tilde \Pi A^*$ and $\tilde A$ given by \RankScore, in addition to the oracle. By Theorem~\ref{thm:monotone}, the rate of estimation achieved by $\tilde \Pi \tilde A$ is of order $\tilde O(n^{-1/2})$, while that achieved by the oracle is of order $\tilde O(n^{-1})$ since there is no permutation term. The plot confirms this discrepancy.
Moreover, $\frac{1}{n^2}\|\tilde \Pi A^* - A^*\|_F^2$ is an appropriate measure of the performance of $\tilde \Pi$ by Lemma~\ref{lem:approx-est} and \ref{lem:rearrange}, and the plot suggests that the rates of estimation achieved by $\tilde \Pi A^*$ and  $\tilde \Pi\tilde A$ are about the same order.
Finally $\tilde A$ seems to have a slightly faster rate of estimation than $\tilde \Pi \tilde A$, so in practice $\tilde A$ could be used to estimate $A$. 
However we refrain from making an explicit conjecture about the rate.

\section{Discussion} \label{sec:discussion}

While computational aspects of the seriation problem have received significant attention, the robustness of this problem to noise was still unknown to date. To overcome this limitation, we have introduced in this paper the statistical seriation model and studied optimal rates of estimation by showing, in particular, that the least squares estimator enjoys several desirable statistical properties such as adaptivity and  minimax optimality (up to logarithmic terms). 

While this work paints a fairly complete statistical picture of the statistical seriation model, it also leaves many unanswered questions. There are several logarithmic gaps in the bounds. In the case of adaptive bounds, some logarithmic terms are unavoidable as illustrated by Theorem~\ref{thm:lower-adaptive} (for the permutation term) and also by statistical dimension consideration explained in \cite{Bel15} (for the estimation term). However, a more refined argument for the uniform bound, namely one that uses covering in $\ell_2$-norm rather than $\ell_\infty$-norm, would allow us to remove the $\log n$ factor from the estimation term in the upper bound of Corollary~\ref{cor:global}. Such an argument can be found in~\cite{BirSol67,AnuBabGod79,Gee91} for the larger class of vectors with bounded total variation (see~\cite{MamGee97}) but we do not pursue sharp logarithmic terms in this work. For the permutation term, $\log n$ in the upper bound of Corollary~\ref{cor:adaptive} and $\log l$ in the lower bound of Theorem~\ref{thm:lower-adaptive} do not match if $l < n$. We do not seek answers to these questions in this paper but note that their answers may be different for the unimodal and the monotone case.

Perhaps the most pressing question is that of computationally efficient estimators. Indeed, while statistically optimal, the least squares estimator requires searching through $n!$ permutations, which is not realistic even for problems of moderate size, let alone genomics applications. We gave a partial answer to this question in the specific context of monotone columns by proposing and studying the performance of a simple and efficient estimator called \RankScore. This study reveals the existence of a potentially intrinsic gap between the statistical performance achievable by efficient estimators and that achievable by estimators with access to unbounded computation. A similar gap is also observed in the SST model for pairwise comparisons~\cite{ShaBalGunWai15}. We conjecture that achieving optimal rates of estimation in the seriation model is computationally hard in general but argue that the planted clique assumption that has been successfully used to establish statistical vs. computational gaps in~\cite{
BerRig13b, MaWu15, ShaBalWai16} for example, is not the correct primitive. Instead, one has to seek for a primitive where hardness comes from searching through permutations rather than subsets. 


\section*{Acknowledgments}
\label{sec:acknowledge}
P.R. is partially supported by  NSF grant DMS-1317308 and NSF CAREER Award DMS-1053987. N.F. gratefully acknowledges partial support by NSF grant DMS-1317308, the Institute for Data Systems and Society and the Mathematics Department during his visit at MIT. C.M. acknowledges partial support by NSF CAREER Award DMS-1053987. P.R. also thanks Enno Mammen for his help in retracing the literature on sharp entropy bounds for monotone classes and Ramon van Handel for a stimulating discussion on entropy numbers. The authors thank Alexandre Tsybakov for bringing concurrent work on unimodal regression by Pierre C. Bellec to their attention. Finally, the authors thank Pierre C. Bellec for pointing out an error in an earlier version of Lemma~A.1 and suggesting that it holds for all closed sets.

\appendix

\section{Proof of the upper bounds} \label{sec:upper}

Before proving the main theorems, we discuss two methods adopted in recent works to bound the error of the LS estimator in shape constrained regression, in a general setting. Consider the least squares estimator $\hat \theta$ of the model $y = \theta^* + z$, where $\theta^*$ lies in a parameter space $\Theta$ and $z$ is Gaussian noise. One way to study ${\E}\|\hat \theta - \theta^*\|_2^2$ is to use the \emph{statistical dimension} \cite{Ameetal14} of a convex cone $\Theta$ defined by
\[
{\E}\Big[\Big(\sup_{\theta \in \Theta,\, \|\theta\|_2\le 1}  \langle \theta, z\rangle\Big)^2 \Big] \,.
\]
 This has been successfully applied to isotonic and more general shape constrained regression \cite{ChaGunSen15, Bel15}. 

Another prominent approach is to express the error of the LS estimator via what is known as \emph{Chatterjee's variational formula}, proved in~\cite{Cha14} and given by
\begin{equation}
\label{EQ:chatterjeeVF}
 \|\hat \theta - \theta^*\|_2 = \operatorname*{argmax}_{t \ge 0} \Big( \sup_{\theta \in \Theta, \|\theta- \theta^*\|_2\le t} \langle \theta - \theta^*, z\rangle - \frac {t^2}2 \Big) \,. 
\end{equation}
Note that the first term is related to the \emph{Gaussian width} (see, e.g., \cite{ChaRecParWil12}) of $\Theta$ defined by
${\E}[\sup_{\theta \in \Theta} \langle \theta, z\rangle],$
whose connection to the statistical dimension was studied in \cite{Ameetal14}.
The variational formula was first proposed for convex regression \cite{Cha14}, and later exploited in several different settings, including matrix estimation with shape constraints \cite{ChaGunSen15-2} and unimodal regression \cite{ChaLaf15}. Similar ideas have appeared in other works, for example, analysis of empirical risk minimization \cite{Men15}, ranking from pairwise comparison \cite{ShaBalGunWai15} and isotonic regression \cite{Bel15}. In this latter work, Bellec has used the statistical dimension approach to prove spectacularly sharp oracle inequalities that seem to be currently out of reach for methods based on Chatterjee's variational formula~\eqref{EQ:chatterjeeVF}. On the other hand, Chatterjee's variational formula seems more flexible as computations of the statistical dimension based on \cite{Ameetal14} are currently limited to convex sets $\Theta$ with a polyhedral structure. In this paper, we use exclusively Chatterjee's variational formula.




%
%

\subsection{A variational formula for the error of the LS estimator}

We begin the proof by stating an extension of Chatterjee's variational formula. While we only need this lemma to hold for a union of closed convex sets we present a version that holds for all closed sets. The latter extension was suggested to us by Pierre C. Bellec in a private communication~\cite{Bel16}.

\begin{lemma} \label{lem:variational}
Let $\mathcal C$ be a closed subset of ${\R}^d$. Suppose that $y = a^* + z$ where $a^* \in \mathcal C$ and  $z \in {\R}^d$. Let $\hat a \in \operatorname*{argmin}_{a \in \mathcal C} \|y- a\|_2^2$ be a projection of $y$ onto $\mathcal{C}$. Define the function $f_{a^*}: {\R}_+ \to {\R}$ by
\[
f_{a^*}(t) = \sup_{a \in \mathcal C\cap \mathcal B^d(a^*,t)} \langle  a - a^*, z\rangle - \frac{t^2}2 \,.
\]
Then we have
\begin{equation} \label{eq:variational}  
\|\hat a - a^*\|_2 \in \operatorname*{argmax}_{t \ge 0} f_{a^*}(t). 
\end{equation}
Moreover, if there exists $t^* > 0$ such that $f_{a^*}(t) < 0$ for all $t \ge t^*$, then
$\|\hat a - a^*\|_2 \le t^*.$
\end{lemma}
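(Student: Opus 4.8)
The plan is to reduce the projection problem to the maximization of the functional $g(a):=\langle a-a^*,z\rangle-\tfrac12\|a-a^*\|_2^2$ over $\mathcal C$, following Chatterjee's original argument, and then re-index the maximization by the radius $t=\|a-a^*\|_2$. First I would expand the squared loss: for every $a\in\mathcal C$,
\[
\|y-a\|_2^2=\|z\|_2^2+\|a-a^*\|_2^2-2\langle a-a^*,z\rangle=\|z\|_2^2-2\,g(a),
\]
so $\hat a$ minimizes $\|y-a\|_2^2$ over $\mathcal C$ if and only if it maximizes $g$ over $\mathcal C$. Since $\mathcal C$ is closed and $a\mapsto\|y-a\|_2^2$ is continuous and coercive, such a maximizer $\hat a$ exists and $\sup_{a\in\mathcal C}g(a)=g(\hat a)$ is finite. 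This is the only place closedness is used, and convexity is never used, which is why the extension from a union of closed convex sets to an arbitrary closed set is free.

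Next I would prove the identity $\sup_{t\ge0}f_{a^*}(t)=\sup_{a\in\mathcal C}g(a)$. For the inequality $\ge$: given $a\in\mathcal C$, set $t=\|a-a^*\|_2$; then $a\in\mathcal C\cap\mathcal B^d(a^*,t)$ and hence $f_{a^*}(t)\ge\langle a-a^*,z\rangle-\tfrac{t^2}2=g(a)$. For the inequality $\le$: for any $t\ge0$ and any $a\in\mathcal C\cap\mathcal B^d(a^*,t)$ we have $\|a-a^*\|_2\le t$, so $g(a)\ge\langle a-a^*,z\rangle-\tfrac{t^2}2$; taking the supremum over such $a$ gives $\sup_{a\in\mathcal C}g(a)\ge f_{a^*}(t)$, and then taking the supremum over $t\ge0$ yields the claim. (By Cauchy--Schwarz $f_{a^*}(t)\le t\|z\|_2-\tfrac{t^2}2<\infty$, so no infinite values arise.)

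To conclude the first assertion, let $\hat t=\|\hat a-a^*\|_2$. Since $\hat a\in\mathcal C\cap\mathcal B^d(a^*,\hat t)$,
\[
f_{a^*}(\hat t)\ge\langle\hat a-a^*,z\rangle-\tfrac{\hat t^2}2=g(\hat a)=\sup_{a\in\mathcal C}g(a)=\sup_{t\ge0}f_{a^*}(t),
\]
and since the reverse inequality is trivial, $f_{a^*}(\hat t)=\sup_{t\ge0}f_{a^*}(t)$, i.e.\ $\hat t\in\operatorname*{argmax}_{t\ge0}f_{a^*}(t)$. For the ``moreover'' part, observe that $\mathcal C\cap\mathcal B^d(a^*,0)=\{a^*\}$ (using $a^*\in\mathcal C$), so $f_{a^*}(0)=0$ and therefore $\sup_{t\ge0}f_{a^*}(t)\ge0$. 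If $f_{a^*}(t)<0$ for all $t\ge t^*$, no such $t$ can attain this nonnegative supremum, hence $\hat t<t^*$, giving $\|\hat a-a^*\|_2\le t^*$.

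The argument is essentially bookkeeping; the one point that needs a little care is the interchange of suprema in $\sup_t f_{a^*}(t)=\sup_a g(a)$, which must remain valid when the inner suprema are not attained — this is handled by keeping all comparisons one-sided and by using closed balls $\mathcal B^d(a^*,t)$ rather than spheres. I do not anticipate any genuine obstacle.
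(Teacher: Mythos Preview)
Your proof is correct and follows essentially the same route as the paper's: both expand the squared loss to reduce the projection to maximizing $g(a)=\langle a-a^*,z\rangle-\tfrac12\|a-a^*\|_2^2$ over $\mathcal C$, then re-index by the radius $t=\|a-a^*\|_2$ and compare with $f_{a^*}$. The paper chains the inequalities $f_{a^*}(\hat t)\ge g(\hat a)\ge \sup_{a\in\mathcal C\cap\mathcal B^d(a^*,t)}g(a)\ge f_{a^*}(t)$ in one display rather than first isolating the identity $\sup_t f_{a^*}(t)=\sup_a g(a)$, but this is a cosmetic difference; the content is the same, and your remarks on existence via coercivity and on the strict inequality $\hat t<t^*$ are harmless elaborations.
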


\begin{proof}
By definition, 
\begin{align*}
\hat a &\in 
\operatorname*{argmin}_{a \in \mathcal C} \Big( \|a - a^*\|_2^2 - 2\langle a - a^*, z\rangle  + \|z\|_2^2 \Big) \\
&= 
\operatorname*{argmax}_{a \in \mathcal C} \Big( \langle a - a^*, z\rangle - \frac 12 \|a - a^*\|_2^2 \Big)
\,.
\end{align*}
Together with the definition of $f_{a^*}$, this implies that
\begin{align*} 
f_{a^*}(\|\hat a - a^*\|_2) &\ge \langle \hat a - a^*, z\rangle - \frac 12 \|\hat a - a^*\|_2^2  \\
& \ge \sup_{a \in \mathcal C\cap \mathcal B^d(a^*,t)} \Big(\langle  a - a^*, z\rangle - \frac 12 \|a - a^*\|_2^2 \Big) \\
& \ge \sup_{a \in \mathcal C\cap \mathcal B^d(a^*,t)} \langle  a - a^*, z\rangle - \frac {t^2}2  = f_{a^*}(t) \,.
\end{align*}
Therefore \eqref{eq:variational} follows.

Furthermore, suppose that there is $t^* > 0$ such that $f_{a^*}(t) < 0$ for all $t \ge t^*$. Since $f_{a^*}(\|\hat a - a^*\|_2) \ge f_{a^*}(0) = 0$, we have $\|\hat a - a^*\|_2 \le t^*$.
\end{proof}

Note that this structural result holds for any error vector $z \in \R^d$ and any closed set $\mathcal{C}$ which is not necessarily convex. In particular, this extends the results in \cite{Cha14} and \cite{ChaLaf15} which hold for convex sets and finite unions of convex sets respectively.

\subsection{Proof of Theorem~\ref{thm:adaptive}}

For our purpose, we need a standard chaining bound on the supremum of a sub-Gaussian process that holds in high probability. The interested readers can find the proof, for example, in \cite[Theorem~5.29]{van14}, and refer to \cite{LedTal91} for a more detailed account of the technique.

\begin{lemma}[Chaining tail inequality] \label{lem:chaining-tail}
Let $\Theta \subset {\R}^d$ and $z \sim \operatorname{subG}(\sigma^2)$ in ${\R}^d$. For any $\theta_0 \in \Theta$, it holds that
\[ \sup_{\theta \in \Theta} \langle \theta - \theta_0, z\rangle \le C \sigma \int_0^{\operatorname{diam}(\Theta)} \sqrt{\log{ N(\Theta, \|\cdot\|_2, \varepsilon)}} \, d \varepsilon + s \]
with probability at least $1-C \exp(-\frac{c s^2}{\sigma^2 \operatorname{diam}(\Theta)^2})$ where $C$ and $c$ are positive constants.
\end{lemma}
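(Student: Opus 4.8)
The plan is to prove this by the classical Dudley chaining argument, with the one twist that the deviation budget $s$ must be tracked through all dyadic scales so as to obtain a tail bound and not merely a bound in expectation. Write $D=\operatorname{diam}(\Theta)$; by rescaling $z$ by $\sigma$ we may assume $\sigma=1$, and we may assume $D<\infty$, so $\Theta$ is totally bounded. Set $\varepsilon_k=2^{-k}D$ for $k\ge 0$, and for each $k$ pick a minimal $\varepsilon_k$-net $\Theta_k$ of $\Theta$, so $|\Theta_k|=N(\Theta,\|\cdot\|_2,\varepsilon_k)=:N_k$, with $\Theta_0=\{\theta_0\}$ (valid since every point of $\Theta$ is within $D$ of $\theta_0$). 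Let $\pi_k\colon\Theta\to\Theta_k$ be a nearest-point map. For fixed $\theta$ and any $J$ the telescoping identity $\pi_J(\theta)-\theta_0=\sum_{k=1}^J\big(\pi_k(\theta)-\pi_{k-1}(\theta)\big)$ holds, with $\|\pi_k(\theta)-\pi_{k-1}(\theta)\|_2\le\varepsilon_k+\varepsilon_{k-1}=3\varepsilon_k$; moreover $\sup_{\theta}\langle\theta-\pi_J(\theta),z\rangle\le\varepsilon_J\|z\|_2\to 0$ as $J\to\infty$ since $z$ is a fixed vector of $\R^d$. Hence it suffices to control $\sup_{\theta\in\Theta}\langle\theta-\theta_0,z\rangle\le\sum_{k\ge 1}\sup_{\theta\in\Theta}\langle\pi_k(\theta)-\pi_{k-1}(\theta),z\rangle$.

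For each fixed $k$, the quantity $\sup_{\theta}\langle\pi_k(\theta)-\pi_{k-1}(\theta),z\rangle$ is a maximum of at most $N_kN_{k-1}\le N_k^2$ random variables of the form $\langle w,z\rangle$ with $\|w\|_2\le 3\varepsilon_k$, each sub-Gaussian with variance proxy $9\varepsilon_k^2$. By a union bound over these increments and the elementary inequality $(\sqrt a+\sqrt b)^2\ge a+b$, for every $r_k\ge 0$,
\[
\p\Big[\,\sup_{\theta}\langle\pi_k(\theta)-\pi_{k-1}(\theta),z\rangle\ \ge\ 3\varepsilon_k\big(\sqrt{2\log(N_kN_{k-1})}+\sqrt 2\,r_k\big)\Big]\ \le\ e^{-r_k^2}.
\]
Now choose $r_k=r+\sqrt{k\log 2}$, so that $\sum_{k\ge 1}e^{-r_k^2}\le e^{-r^2}$ while $\sum_{k\ge 1}\varepsilon_k r_k\le D\sum_{k\ge 1}2^{-k}(r+\sqrt{k\log 2})\le D(r+c_0)$ for an absolute constant $c_0$. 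A union bound over $k\ge 1$ then gives, with probability at least $1-e^{-r^2}$,
\[
\sup_{\theta\in\Theta}\langle\theta-\theta_0,z\rangle\ \le\ \sum_{k\ge 1}3\varepsilon_k\sqrt{2\log(N_kN_{k-1})}\ +\ 3\sqrt2\sum_{k\ge 1}\varepsilon_k r_k.
\]

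The first sum is the dyadic Dudley sum: monotonicity of $\varepsilon\mapsto N(\Theta,\|\cdot\|_2,\varepsilon)$ gives $\varepsilon_k\sqrt{\log N_k}\le 2\int_{\varepsilon_{k+1}}^{\varepsilon_k}\sqrt{\log N(\Theta,\|\cdot\|_2,\varepsilon)}\,d\varepsilon$, and summing (together with $\log(N_kN_{k-1})\le 2\log N_k$) bounds it by a constant multiple of $\int_0^{D}\sqrt{\log N(\Theta,\|\cdot\|_2,\varepsilon)}\,d\varepsilon$. The second sum is at most $3\sqrt2\,D(r+c_0)$; since $N(\Theta,\|\cdot\|_2,\varepsilon)\ge 2$ for $\varepsilon<D/2$ we have $D\lesssim\int_0^{D}\sqrt{\log N(\Theta,\|\cdot\|_2,\varepsilon)}\,d\varepsilon$, so the $c_0 D$ contribution is absorbed into the entropy integral while the $Dr$ contribution is the deviation term: taking $r=s/(C'D)$ and reinstating $\sigma$ produces exactly the asserted bound with deviation probability $\le\exp(-cs^2/(\sigma^2 D^2))$, the constant $C$ in front being harmless. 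The routine but fiddly part is the allocation $r_k=r+\sqrt{k\log 2}$: one needs $\sum_k e^{-r_k^2}$ to collapse to a single exponential in $r$ without letting $\sum_k\varepsilon_k r_k$ grow beyond $O(Dr)$. The only structural input hidden above is total boundedness of $\Theta$, which makes the Dudley integral finite and the chaining residual $\varepsilon_J\|z\|_2$ vanish in the limit; in the applications of this paper $\Theta$ is a bounded subset of $\R^d$, so this is automatic.
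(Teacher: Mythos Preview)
Your proof is correct and is precisely the standard Dudley chaining argument with a per-scale deviation allocation. Note that the paper does not supply its own proof of this lemma; it simply cites \cite[Theorem~5.29]{van14} and \cite{LedTal91}, whose arguments proceed along the same lines as yours (dyadic nets, telescoping, sub-Gaussian union bound at each scale, summable deviation budgets $r_k$). So there is nothing to compare: you have written out what the paper leaves to the references, and the details---the choice $r_k=r+\sqrt{k\log 2}$, the absorption of the constant $c_0D$ into the entropy integral via $N(\Theta,\|\cdot\|_2,\varepsilon)\ge 2$ for $\varepsilon<D/2$, and the handling of the residual $\varepsilon_J\|z\|_2$---are all sound.
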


Let $\tilde A \in \mathcal U^m$. To ligthen the notation, we define two rates of estimation:
\begin{equation}\label{eq:r1} 
R_1 = R_1(\tilde A, n) = \sigma \Big(\sqrt {K(\tilde A) \log \frac{e n m}{K(\tilde A)} } + \sqrt{n \log n}\Big) 
\end{equation}
and
\begin{equation} \label{eq:r2}
R_2 = R_2(\tilde A, n)= \sigma^2 \Big(K(\tilde A) \log \frac{e n m}{K(\tilde A)} + n  \log n\Big). 
\end{equation}
Note that $R_2 \le R_1^2 \le 2 R_2$.

\begin{lemma} \label{lem:f-bound}
Suppose $Y = A^* + Z$ where $A^* \in {\R}^{n\times m}$ and $Z \sim \operatorname{subG}(\sigma^2)$. For $\tilde A \in \mathcal U^m$ and all  $t > 0$, define
\[
f_{\tilde A}(t) = \sup_{A \in \mathcal M \cap \mathcal B^{nm}(\tilde A, t)} \langle A - \tilde A, Y- \tilde A\rangle - \frac{t^2}2. 
\]
Then for any $s>0$, it holds simultaneously for all $t > 0$ that
\begin{equation} \label{eq:f-bound}
f_{\tilde A}(t) \le C R_1 t + t \|A^*-\tilde A\|_F  - \frac {t^2}2 + st
\end{equation}
with probability at least $1- C \exp(- \frac{c s^2}{\sigma^2})$, where $C$ and $c$ are positive constants.
\end{lemma}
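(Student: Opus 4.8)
The plan is to split the functional in \eqref{eq:f-bound} into a deterministic part, which accounts \emph{exactly} for the $t\|A^*-\tilde A\|_F - t^2/2$ terms, and a stochastic empirical process, which is then controlled by the chaining tail inequality together with a metric entropy bound for $\mathcal M$. First I would write $Y - \tilde A = (A^* - \tilde A) + Z$, so that for every $A \in \mathcal M \cap \mathcal B^{nm}(\tilde A, t)$ we have $\langle A - \tilde A, Y - \tilde A\rangle = \langle A - \tilde A, A^* - \tilde A\rangle + \langle A - \tilde A, Z\rangle$. By Cauchy--Schwarz and the constraint $\|A - \tilde A\|_F \le t$, the first summand is at most $t\|A^* - \tilde A\|_F$; subtracting $t^2/2$ reproduces the deterministic part of \eqref{eq:f-bound}. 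Everything then reduces to showing that the stochastic term $g(t) := \sup_{A \in \mathcal M \cap \mathcal B^{nm}(\tilde A, t)} \langle A - \tilde A, Z\rangle$ satisfies $g(t) \le C R_1 t + st$, simultaneously in $t$, with probability at least $1 - C\exp(-cs^2/\sigma^2)$.

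For a fixed $t$ I would apply Lemma~\ref{lem:chaining-tail} to $\Theta = \mathcal M \cap \mathcal B^{nm}(\tilde A, t)$ with base point $\theta_0 = \tilde A$ (legitimate since $\tilde A \in \mathcal U^m \subseteq \mathcal M$) and with $\operatorname{diam}(\Theta) \le 2t$. This requires a metric entropy bound of the shape
\[
\log N\bigl(\mathcal M \cap \mathcal B^{nm}(\tilde A, t),\, \|\cdot\|_F,\, \varepsilon\bigr) \;\lesssim\; K(\tilde A)\,\log\tfrac{enm}{K(\tilde A)}\,\Bigl(1 + \log\tfrac{t}{\varepsilon}\Bigr) + n\log n ,
\]
which is precisely what is established in Appendix~\ref{sec:covering}: the within-cone complexity is captured by approximating each unimodal column by a piecewise-constant vector, paying for the total number of pieces (which sums to $K(\tilde A)$) and for their allocation across columns; the term $n\log n \asymp \log(n!)$ is the price of the unknown permutation; and the $m\log n$ cost of selecting the $m$ modes is absorbed because $K(\tilde A) \ge m$. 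The entropy integral $\sigma\int_0^{2t}\sqrt{\log N(\Theta, \|\cdot\|_F, \varepsilon)}\,d\varepsilon$ is then $\lesssim R_1 t$ after the substitution $\varepsilon = tu$ (the $\log(t/\varepsilon)$ factor integrates to a constant), with $R_1$ as in \eqref{eq:r1}. Running Lemma~\ref{lem:chaining-tail} with its slack rescaled to $st$ makes the resulting tail probability $1 - C\exp(-cs^2/\sigma^2)$ independent of $t$ and yields $g(t) \le C R_1 t + st$ for that fixed $t$.

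To obtain the bound simultaneously in $t$, I would note that $g$ is non-decreasing while $t \mapsto CR_1 t + st$ is linear, so it suffices to verify the inequality on a dyadic net of values of $t$ (losing only a factor $2$, absorbed into the constants). The union bound over dyadic scales is harmless: by the crude estimate $\langle A - \tilde A, Z\rangle \le t\|Z\|_F$ one has $f_{\tilde A}(t) < 0$ — so \eqref{eq:f-bound} holds trivially — once $t > 2(\|Z\|_F + \|A^* - \tilde A\|_F)$, and $\|Z\|_F \lesssim \sigma\sqrt{nm}$ on an event of probability $1 - e^{-c(n+m)}$; hence only $O(\log(nm))$ scales are relevant, and the logarithmic factor is absorbed into $C$ and $c$ since the statement is non-vacuous only for $s \gtrsim \sigma$. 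Combining with the deterministic part of the first step gives \eqref{eq:f-bound}.

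The main obstacle is the metric entropy bound of Appendix~\ref{sec:covering} invoked above: one must bound the covering number of a union of $n!\,n^m$ convex cones localized to a small Euclidean ball in such a way that the leading term is driven by $K(\tilde A)$ rather than by the ambient dimension $nm$, and that each degree of freedom costs only $\log(enm/K(\tilde A))$ rather than $\log n$ — the feature that ultimately makes Theorem~\ref{thm:adaptive} adaptive. Once this estimate is available, the remaining steps here are routine: the decomposition, Cauchy--Schwarz, the chaining tail inequality, and the peeling over $t$.
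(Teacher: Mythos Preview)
Your decomposition into deterministic and stochastic parts, the Cauchy--Schwarz step, and the appeal to chaining plus the metric entropy of Appendix~\ref{sec:covering} are all correct and mirror the paper. Two things are off, however.

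First, the entropy bound you quote is not what Appendix~\ref{sec:covering} actually proves. Lemma~\ref{lem:cover-matrix-union} gives
\[
\log N\bigl(\Theta_{\mathcal M}(\tilde A, t),\, \|\cdot\|_F,\, \varepsilon\bigr) \;\le\; C\,\varepsilon^{-1}t\,K(\tilde A)\log\tfrac{enm}{K(\tilde A)} + n\log n,
\]
with the nonparametric scaling $t/\varepsilon$, not $1+\log(t/\varepsilon)$: the set $\mathcal M\cap\mathcal B^{nm}(\tilde A,t)$ contains \emph{all} unimodal matrices near $\tilde A$, not only those with $K(\tilde A)$ pieces. Fortunately the Dudley integral still evaluates to $CR_1 t$ either way, so this is a misstatement rather than a fatal error.

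Second, and more substantively, your peeling argument for simultaneity in $t$ does not close. For large $t$ you note that $f_{\tilde A}(t)<0$, but this does \emph{not} imply \eqref{eq:f-bound}: the right-hand side $CR_1 t + t\|A^*-\tilde A\|_F - t^2/2 + st$ is also eventually negative, and matching the two would require $\|Z\|_F\le CR_1+s$, which fails in general since $\|Z\|_F\asymp\sigma\sqrt{nm}$ while $R_1$ can be as small as $\sigma\sqrt{(n\vee m)\log n}$. For small $t$ you give no lower cutoff, so the dyadic union bound is over infinitely many scales. The paper sidesteps peeling entirely by exploiting that $\mathcal M$ is a union of cones: since $A\in\mathcal M$ iff $t^{-1}A\in\mathcal M$, one has
\[
g(t)
= t\sup_{B\in\mathcal M\cap\mathcal B^{nm}(t^{-1}\tilde A,1)}\langle B-t^{-1}\tilde A,Z\rangle
\le t\sup_{M\in\Theta}\langle M,Z\rangle,
\]
where $\Theta=\Theta_{\mathcal M}(\tilde A,1)=\bigcup_{\lambda\ge 0}\bigl(\mathcal M\cap\mathcal B^{nm}(\lambda\tilde A,1)-\lambda\tilde A\bigr)$ is a single fixed set independent of $t$. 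One application of Lemma~\ref{lem:chaining-tail} to $\Theta$ then bounds $\sup_{M\in\Theta}\langle M,Z\rangle\le CR_1+s$ on a single event, and $g(t)\le (CR_1+s)t$ follows for every $t>0$ simultaneously with no union bound. This scaling trick is the missing ingredient in your argument.
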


\begin{proof}
Define $\Theta = \Theta_{\mathcal M}(\tilde A, 1) = \bigcup_{\lambda \ge 0} \{B - \lambda \tilde A: B \in \mathcal M \cap \mathcal B^{nm}(\lambda \tilde A, 1) \}$ (see also Definition~\eqref{eq:def-theta}). In particular, $\Theta \subset \mathcal B^{nm}(0,1)$ and $0 \in \Theta$.
Since $\mathcal M$ is a finite union of convex cones and thus is star-shaped, by scaling invariance,
\[
\sup_{A \in \mathcal M \cap \mathcal B^{nm}(\tilde A, t)} \langle A - \tilde A, Z\rangle 
= t \sup_{B \in \mathcal M \cap \mathcal B^{nm}(t^{-1} \tilde A, 1)} \langle B - t^{-1}\tilde A, Z\rangle 
\le t \sup_{M \in \Theta} \langle M, Z\rangle .
\]
By Lemma~\ref{lem:chaining-tail}, with probability at least $1- C \exp(- \frac{c s^2}{\sigma^2})$,
\[
\sup_{M \in \Theta} \langle M, Z\rangle \le C \sigma \int_0^2 \sqrt{\log{ N(\Theta, \|\cdot\|_F, \varepsilon)}} \, d \varepsilon + s \,.
\]
Moreover, it follows from Lemma~\ref{lem:cover-matrix-union} that 
\[ \log{ N(\Theta, \|\cdot\|_F, \varepsilon)}\leq C \varepsilon^{-1} K(\tilde A)  \log  \frac{enm}{K(\tilde A)} + n  \log n \,. \]
Combining the previous three displays, we see that
\begin{align*} 
\sup_{A \in \mathcal M \cap \mathcal B^{nm}(\tilde A, t)} \langle A - \tilde A, Z\rangle &\le C \sigma t \int_0^{2} \sqrt{C \varepsilon^{-1} K(\tilde A)  \log  \frac{enm}{K(\tilde A)} + n  \log n } \,\, d \varepsilon + st \\
& \le C \sigma t \sqrt {K(\tilde A) \log \frac{enm}{K(\tilde A)}} 
+ C \sigma t \sqrt{ n  \log n} + st \\
& = C R_1 t + st 
\end{align*}
with probability at least $1- C \exp(- \frac{c s^2}{\sigma^2})$. Therefore
\begin{align*} 
f_{\tilde A}(t) &= \sup_{A \in \mathcal M \cap \mathcal B^{nm}(\tilde A, t)} \langle  A- \tilde A, Y-\tilde A \rangle - \frac{t^2}2 \\
&\le \sup_{A \in \mathcal M \cap \mathcal B^{nm}(\tilde A, t)} \langle  A- \tilde A, Z \rangle +  \sup_{A \in \mathcal M \cap \mathcal B^{nm}(\tilde A, t)}  \langle  A- \tilde A, A^*-\tilde A \rangle - \frac{t^2}2 \\
&\le C R_1 t  + st + t \|A^*-\tilde A\|_F  - \frac {t^2}2 
\end{align*}
with probability at least $1- C \exp(- \frac{c s^2}{\sigma^2})$ simultaneously for all $t > 0$.
\end{proof}

We are now in a position to prove the adaptive oracle inequalities in Theorem~\ref{thm:adaptive}. 
Recall that $(\hat \Pi, \hat A)$ denotes the LS estimator defined in \eqref{def-pi-a}.  Without loss of generality, assume that $\Pi^* = I_n$ and $Y = A^* + Z$.

Fix $\tilde A \in \mathcal U^m$ and define $f_{\tilde A}$ as in Lemma~\ref{lem:f-bound}. We can apply Lemma~\ref{lem:variational} with $a^* = \tilde A$, $z = Y - \tilde A$, $y = Y$ and $\hat a = \hat \Pi \hat A$ to achieve an error bound on $\|\hat \Pi \hat A - \tilde A\|_F,$ since $\hat \Pi \hat A \in \operatorname*{argmin}_{M \in \mathcal M} \|Y - M\|_F^2$. To be more precise, for any $s>0$ we define $t^* = 3C_1 R_1 + 2 \|A^* - \tilde A\|_F + 2s$ where $C_1$ is the constant in \eqref{eq:f-bound}. Then it follows from Lemma~\ref{lem:f-bound} that with probability at least $1- C\exp(-\frac{c s^2}{\sigma^2 })$, it holds for all $t  \ge t^*$ that
\[ f_{\tilde A}(t) \le C_1 R_1 t + t \|A^*-\tilde A\|_F  - \frac {t^2}2 + st < 0 \,. \]
Therefore by Lemma~\ref{lem:variational}, 
\[
\|\hat \Pi \hat A - \tilde A\|_F \le t^* = 3C_1 R_1 + 2 \|A^* - \tilde A\|_F + 2s \,,
\]
and thus
\begin{equation} \label{eq:adaptive-frobenius} 
\|\hat \Pi \hat A - A^*\|_F \le C (R_1 + \|A^* - \tilde A\|_F) + 2s
\end{equation}
with probability at least $1- C\exp(-\frac{c s^2}{\sigma^2}).$

In particular,
if $s = R_1$, then $s \ge \sigma \sqrt{n+m}$ as $K(\tilde A) \ge m$. We see that with probability at least $1- C \exp(-\frac{c s^2}{\sigma^2}) \ge 1- e^{-c(n+m)}$,
\[ 
\|\hat \Pi \hat A - A^*\|_F \lesssim R_1 + \|A^* - \tilde A\|_F
\]
and thus
\[
\|\hat \Pi \hat A - A^*\|_F^2 \lesssim   \|A^* - \tilde A\|_F^2 +  \sigma^2  K(\tilde A)\log \frac{enm}{K(\tilde A)} + \sigma^2 n\log n \,.
\]
Finally,  \eqref{eq:oracle-loss} follows by taking the infimum over $\tilde A \in \mathcal U^m$ on the right-hand side and dividing both sides by $nm$.

Next, to prove the bound in expectation, observe that \eqref{eq:adaptive-frobenius} yields
\[{\p}\Big[\|\hat \Pi \hat A - A^*\|_F^2 - C (R_2 + \|A^*-\tilde A\|_F^2) \ge s \Big] \le C \exp(- \frac{c s}{\sigma^2}),
\]
where $R_2$ is defined in \eqref{eq:r2}.
Integrating the tail probability, we get that
\begin{align*}
{\E}\|\hat \Pi \hat A - A^*\|_F^2 - C (R_2 + \|A^*-\tilde A\|_F^2) &\lesssim  \int_0^\infty \exp(- \frac{c s}{\sigma^2}) \, d s = \frac{\sigma^2}c
\end{align*}
and therefore
\[
{\E}\|\hat \Pi \hat A - A^*\|_F^2 \lesssim R_2 + \|A^*-\tilde A\|_F^2 \,.
\]
Dividing both sides by $nm$ and minimizing over $\tilde A \in \mathcal U^m$ yields \eqref{eq:oracle-mse}.

\subsection{Proof of Theorem~\ref{thm:global}}
In the setting of isotonic regression, \cite{BelTsy15} derived global bounds from adaptive bounds by a block approximation method, which also applies to our setting.  
For $k \in [n]$, let
\[ \mathcal U_{k} = \big\{a \in \mathcal U: \card (\{a_1, \dots, a_n\}) \le k \big\}. \]
Define $k^* = \big\lceil \big( \frac{V(a)^2 n}{\sigma^2 \log(en) } \big)^{1/3}  \big\rceil$. The lemma below is very similar to \cite[Lemma~2]{BelTsy15} and their proof also extends to the unimodal case with minor modifications. We present the result with proof for completeness.

\begin{lemma} \label{lem:vector-approximation}
For $a \in \mathcal U$
and $k \in [n]$,
there exists $\tilde a \in \mathcal U_{k}$ such that 
\begin{equation}\label{eq:vector-approx}
\frac 1{\sqrt n} \|\tilde a - a\|_2 \le \frac{V(a)}{2k}. 
\end{equation}
In particular,
there exists $\tilde a \in \mathcal U_{k^*}$ such that
\[ \frac 1n \|\tilde a - a\|_2^2 \le \frac 14 \max\Big( \Big(\frac{ \sigma^2 V(a) \log (en) }{n}\Big)^{2/3} , \frac{\sigma^2 \log (en)}{n} \Big). \]
Moreover,
\[ \frac{\sigma^2 k^*}{n} \log (en) \le 2 \max\Big( \Big(\frac{\sigma^2 V(a)\log  (en)}{n}\Big)^{2/3} , \frac{\sigma^2 \log (en)}{n} \Big). \]
\end{lemma}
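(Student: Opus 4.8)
The plan is to prove the three assertions in order, the first being the substantive one and the remaining two following by the specific choice $k = k^*$ together with elementary algebra. For the approximation bound \eqref{eq:vector-approx}, I would first reduce to the monotone case: since $a \in \mathcal U$, there is a mode index $l$ such that $a$ is increasing on $\{1,\dots,l\}$ and decreasing on $\{l,\dots,n\}$. Split the available budget of $k$ distinct values between the increasing part and the decreasing part, say allocating $k_1$ values to the ascending run and $k_2 = k - k_1$ to the descending run (one can afford to have the two runs share the value $a_l$). On each monotone piece, the standard block-approximation for a monotone sequence applies: if $b \in \R^p$ is increasing with variation $V(b) = b_p - b_1$, one partitions the \emph{range} $[b_1, b_p]$ into $k_1$ equal sub-intervals of length $V(b)/k_1$ and rounds each $b_i$ to the midpoint (or left endpoint) of the sub-interval it falls into; the resulting $\tilde b$ is still monotone, takes at most $k_1$ values, and satisfies $|\tilde b_i - b_i| \le V(b)/(2k_1)$ entrywise, hence $\|\tilde b - b\|_2^2 \le p \, V(b)^2/(4k_1^2)$. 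Doing this on both pieces and concatenating, one gets $\tilde a \in \mathcal U_k$ with $\|\tilde a - a\|_2^2 \le n V(a)^2/(4 k^2)$ after optimizing (or just crudely bounding) the split, which is exactly \eqref{eq:vector-approx}. The main thing to check carefully is that gluing the two monotone approximations at the mode does not destroy unimodality — this is handled by ensuring the approximate value at the mode is the common maximum, so the ascending approximation ends where the descending one begins.

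For the second assertion, I would simply substitute $k = k^* = \lceil (V(a)^2 n / (\sigma^2 \log(en)))^{1/3} \rceil$ into \eqref{eq:vector-approx}. Then $\frac 1n \|\tilde a - a\|_2^2 \le \frac{V(a)^2}{4 (k^*)^2}$, and since $k^* \ge (V(a)^2 n/(\sigma^2\log(en)))^{1/3}$ we get $(k^*)^2 \ge (V(a)^2 n/(\sigma^2\log(en)))^{2/3}$, so $\frac{V(a)^2}{4(k^*)^2} \le \frac 14 (\sigma^2 V(a)\log(en)/n)^{2/3}$. This already gives the claimed bound without even needing the maximum with $\frac{\sigma^2\log(en)}{n}$; the $\max$ is written for uniformity with the third display (and covers the degenerate case $V(a)=0$, where the left side is $0$ anyway).

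For the third assertion, the point is to control $k^*$ from above: because of the ceiling, $k^* \le (V(a)^2 n/(\sigma^2\log(en)))^{1/3} + 1$, and one splits into two regimes. If $V(a)^2 n \ge \sigma^2 \log(en)$, the first term dominates, $k^* \le 2 (V(a)^2 n/(\sigma^2\log(en)))^{1/3}$, and multiplying by $\sigma^2\log(en)/n$ gives $\frac{\sigma^2 k^*}{n}\log(en) \le 2(\sigma^2 V(a)\log(en)/n)^{2/3}$. If instead $V(a)^2 n < \sigma^2\log(en)$, then the cube-root term is less than $1$, so $k^* \le 2$, and $\frac{\sigma^2 k^*}{n}\log(en) \le 2\frac{\sigma^2\log(en)}{n}$. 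Taking the maximum of the two cases yields the stated bound. I expect essentially no obstacle in the last two parts — they are bookkeeping around the ceiling function — and the only place demanding genuine care is the unimodal gluing argument in the first part, where one must verify that the concatenated piecewise-constant sequence is indeed unimodal and that the per-piece variations sum correctly to $V(a)$.
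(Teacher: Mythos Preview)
Your treatment of the second and third assertions is correct and mirrors the paper's case split on the size of $k^*$. The gap is in the first assertion. Splitting at the mode and quantizing the two monotone pieces \emph{separately} with budgets $k_1$ and $k_2=k-k_1$ does not recover the sharp constant in \eqref{eq:vector-approx}. The ascending and descending variations are $V_1=a_l-a_1$ and $V_2=a_l-a_n$, and these do \emph{not} ``sum correctly to $V(a)$'' as you suggest: one has $V(a)=\max(V_1,V_2)$, and $V_1+V_2$ can be as large as $2V(a)$. To force per-entry error $\le V(a)/(2k)$ on each piece you would need $k_j\ge kV_j/V(a)$, which in general requires $k_1+k_2$ up to $2k$. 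Optimizing the split in the $\ell_2$ sense only yields $\|\tilde a-a\|_2/\sqrt n\le V(a)/k$ in the worst case (take $a_1=a_n$ and $l\approx n/2$, so $V_1=V_2=V(a)$; then $k_1=k_2=k/2$ gives per-entry error $V(a)/k$). The gluing issue you flag is also genuine with separate grids: the two midpoint values assigned to $a_l$ are $a_l-V_1/(2k_1)$ and $a_l-V_2/(2k_2)$, which typically differ.

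The paper's construction avoids both difficulties in one stroke: partition the full range $[\min(a_1,a_n),\,\max_i a_i]$ into $k$ equal sub-intervals of length $V(a)/k$ and map each $a_i$ to the midpoint of the sub-interval containing it. This quantizer is a monotone function of the \emph{value} $a_i$, so it automatically preserves the increasing run on $\{1,\dots,l\}$ and the decreasing run on $\{l,\dots,n\}$ --- no gluing argument is needed --- while using at most $k$ distinct values and giving $|\tilde a_i-a_i|\le V(a)/(2k)$ for every $i$. If you insist on the two-piece viewpoint, the fix is to apply this \emph{same} $k$-level grid to both pieces rather than allocating separate budgets; but then the split is doing no work.
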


\begin{proof}
Let $\underline a=\min(a_1,a_n)$, $\bar a=\max_{i\in[n]} a_i$ and $i_{0}\in \operatorname*{argmax}_{i\in[n]} a_i$. For $j\in[k-1]$, consider the intervals 
\[ I_j = \Big[\underline a+ \frac{j-1}{k}V(a),  \underline a+ \frac{j}{k}V(a)\Big] , \]
and $ I_k = \Big[\underline a+ \frac{k-1}{k}V(a),  \bar a\Big]$. Also for $j\in [k]$, let $ J_j=\{i\in [n] : a_i\in I_j\}$.
We define the vector  $\tilde a\in {\R}^n$ by $\tilde a_i=\underline a+ \frac{j-1/2}{k}V(a)$ for $i \in [n]$, where $j$ is uniquely determined by $i\in I_j$.
Since $a$ is increasing on $\{1, \dots, i_0\}$ and decreasing $\{i_0, \dots, n\}$, so is 
$\tilde a$.
Thus $\tilde a \in \mathcal U_k$.
Moreover, 
$|\tilde a_i-a_i|\leq \frac{V(a)}{2k}$ for $i\in[n]$, which implies \eqref{eq:vector-approx}.

Next we prove the latter two assertions. Since $k^* = \lceil \big( \frac{V(a)^2 n}{\sigma^2 \log (en)} \big)^{1/3} \rceil$, if $\tilde a \in \mathcal U_{k^*}$ and $k^* = 1$ then
 \[ \frac 1n \|\tilde a - a\|_2^2 \le  \frac{V(a)^2}4 \le \frac{\sigma^2}{4n} \log (en) \]
 and
 \[ \frac{\sigma^2 k^*}{n} \log (en) = \frac{\sigma^2}{n} \log (en). \]
 On the other hand, if $k^* > 1$, then
 \[  \frac 1n \|\tilde a - a\|_2^2 \le \frac{V(a)}{4 (k^*)^2} \le  \frac 14 \big(\frac{\sigma^2 V(a) \log (en)}{n}\big)^{2/3} \]
 and
 \[ \frac{\sigma^2 k^*}{n} \log (en) \le 2 \big(\frac{\sigma^2 V(a) \log (en)}{n}\big)^{2/3}.
 \]
 \end{proof}

It is straightforward to generalize the lemma to matrices. 
For $\mathbf k \in [n]^m$, we write $\mathbf k = (k_1, \dots, k_m)$ and let
\[\mathcal U_\mathbf k^m = \{A \in \mathcal U^m: \card(\{A_{1, j}, \dots, A_{n, j}\}) = k_j \text{ for } 1\le j \le m\} . \]
Then $K(A) = \sum_{j=1}^m k_j$ for $A \in \mathcal U^m_{\mathbf k}$.
Define $\mathbf k^*$ by 
\[ k_j^* = \Big\lceil \Big( \frac{V(A_{\cdot,j})^2 n}{\sigma^2 \log (en)} \Big)^{1/3} \Big\rceil.
\]

\begin{lemma} \label{lem:block-approximation}
For $A \in \mathcal U^m$, there exists $\tilde A \in \mathcal U_{\mathbf k^*}^m$ such that
\[
\frac 1{nm} \|\tilde A - A\|_F^2 \le \frac 14 \Big(\frac{\sigma^2 V(A)\log  (en)}{n}  \Big)^{2/3} + \frac{\sigma^2}{4n}\log (en)  \]
and
\[ \frac{\sigma^2 K(\tilde A)}{nm} \log (en) \le 2 \Big(\frac{\sigma^2 V(A)\log  (en)}{n}  \Big)^{2/3} + \frac{2\sigma^2}{n}\log (en) \,  .\]
\end{lemma}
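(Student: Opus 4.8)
\textbf{Proof plan for Lemma~\ref{lem:block-approximation}.}

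The plan is to apply Lemma~\ref{lem:vector-approximation} column by column and then sum the resulting bounds, being careful to organize the column-wise quantities so that the aggregated approximation error comes out in terms of the $2/3$-norm $V(A)$. First I would fix $A \in \mathcal U^m$ and, for each $j \in [m]$, apply the second and third assertions of Lemma~\ref{lem:vector-approximation} to the column $a = A_{\cdot,j}$ with $k = k_j^*$. This produces a vector $\tilde A_{\cdot,j} \in \mathcal U_{k_j^*}$ satisfying
\[
\frac 1n \|\tilde A_{\cdot,j} - A_{\cdot,j}\|_2^2 \le \frac 14 \max\Big( \Big(\tfrac{\sigma^2 V(A_{\cdot,j}) \log(en)}{n}\Big)^{2/3}, \tfrac{\sigma^2 \log(en)}{n} \Big)
\]
and the corresponding bound on $\frac{\sigma^2 k_j^*}{n}\log(en)$. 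Concatenating these columns gives a matrix $\tilde A \in \mathcal U_{\mathbf k^*}^m$, and by definition $K(\tilde A) = \sum_{j=1}^m k_j^*$ and $\|\tilde A - A\|_F^2 = \sum_{j=1}^m \|\tilde A_{\cdot,j} - A_{\cdot,j}\|_2^2$.

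Next I would sum over $j$ and divide by $m$. The only subtlety is to pass from $\frac 1m \sum_j \max\big( (\cdots V(A_{\cdot,j}) \cdots)^{2/3}, \cdots\big)$ to the clean form $\big(\frac{\sigma^2 V(A)\log(en)}{n}\big)^{2/3} + \frac{\sigma^2}{n}\log(en)$. I would use the elementary inequality $\max(x,y) \le x + y$ to split the maximum, then handle the two pieces separately: the constant piece $\frac{\sigma^2}{n}\log(en)$ simply averages to itself, while for the variation piece I would invoke concavity of $t \mapsto t^{2/3}$ together with Jensen's inequality (or equivalently the definition of $V(A)$ as the $2/3$-"norm" of the vector $(V(A_{\cdot,1}), \dots, V(A_{\cdot,m}))$), namely
\[
\frac 1m \sum_{j=1}^m V(A_{\cdot,j})^{2/3} = \Big(\frac 1m \sum_{j=1}^m V(A_{\cdot,j})^{2/3}\Big) = V(A)^{2/3},
\]
so that $\frac 1m \sum_j \big(\frac{\sigma^2 V(A_{\cdot,j})\log(en)}{n}\big)^{2/3} = \big(\frac{\sigma^2 \log(en)}{n}\big)^{2/3} V(A)^{2/3} = \big(\frac{\sigma^2 V(A)\log(en)}{n}\big)^{2/3}$. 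The same two steps applied to the third assertion of Lemma~\ref{lem:vector-approximation} give the stated bound on $\frac{\sigma^2 K(\tilde A)}{nm}\log(en)$.

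There is no real obstacle here; the lemma is essentially a bookkeeping exercise that lifts the scalar result to matrices, and the one point requiring a moment's care is recognizing that averaging the column-wise variation bounds after raising to the $2/3$ power is exactly what the definition of $V(A)$ was built to absorb — indeed this is the remark made just before Theorem~\ref{thm:global} that the $2/3$-norm is "the correct extrapolation from vectors to matrices." If one wanted $C = 1$ constants one would have to be slightly more delicate, but since the statement only claims the bounds up to the explicit numerical factors $\tfrac14$ and $2$ already present in the scalar lemma, splitting the max via $\max(x,y)\le x+y$ and Jensen suffices.
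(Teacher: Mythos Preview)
Your proposal is correct and matches the paper's proof essentially step for step: apply Lemma~\ref{lem:vector-approximation} columnwise, bound each $\max(x,y)$ by $x+y$, then sum and use the identity $\frac{1}{m}\sum_j V(A_{\cdot,j})^{2/3} = V(A)^{2/3}$. The only cosmetic remark is that the invocation of Jensen/concavity is unnecessary here---as you yourself note, $\frac{1}{m}\sum_j V(A_{\cdot,j})^{2/3} = V(A)^{2/3}$ is an equality by the very definition of $V(A)$, not an inequality.
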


\begin{proof}
Applying Lemma~\ref{lem:vector-approximation} to columns of $A$, we see that
there exists $\tilde A \in \mathcal U_{\mathbf k^*}^m$ such that
\[
\frac 1n \|\tilde A_{\cdot,j} - A_{\cdot,j}\|_2^2 \le \frac 14 \max\Big( \big(\frac{\sigma^2 V(A_{\cdot,j}) \log  (en)}{n}\big)^{2/3} , \frac{\sigma^2 }{n} \log  (en) \Big) 
\]
and
\[ 
\frac{\sigma^2 k_j^*}{n} \log  (en)  \le 2 \max\Big( \big(\frac{\sigma^2 V(A_{\cdot,j})\log (en)}{n}\big)^{2/3} , \frac{\sigma^2 }{n} \log  (en) \Big). \]
Summing over $1 \le j \le m$, we get that
\begin{align*}
\frac 1{nm} \|\tilde A - A\|_F^2 &\le \frac{1}{4m} \Big(\frac{\sigma^2 \log (en)}{n}\Big)^{2/3} \sum_{j=1}^m V(A_{\cdot,j})^{2/3} + \frac{\sigma^2 \log (en)}{4 n} \\
&=  \frac 14 \Big(\frac{\sigma^2 V(A)\log  (en)}{n}  \Big)^{2/3} + \frac{\sigma^2}{4n}\log (en) \, ,
\end{align*}
and similarly
\[ \frac{\sigma^2 K(\tilde A)}{nm} \log (en) \le 2 \Big(\frac{\sigma^2 V(A)\log  (en)}{n}  \Big)^{2/3} + \frac{2 \sigma^2}{n}\log (en) \, .\]
\end{proof}

For $A \in \mathcal U^m$, choose $\tilde A \in \mathcal U_{\mathbf k^*}^m$ according to Lemma~\ref{lem:block-approximation}. Then
\begin{align} 
\frac 1{nm} \| \tilde  A - A^*\|_F^2  &\le \frac 2{nm} \|A - A^*\|_F^2  + \frac 2{nm} \|\tilde A - A\|_F^2 \nonumber \\
& \le \frac 2{nm} \|A - A^*\|_F^2 + \frac 54 \Big(\frac{\sigma^2 V(A) \log n}{n}\Big)^{2/3} + \frac{5 \sigma^2 }{4 n}\log n  \label{eq:global-triangle}
\end{align}
by noting that $\log(en) \le 2.5 \log n$ for $n \ge 2$,
and similarly
\begin{equation} \label{eq:global-rate-bound}
\frac{\sigma^2 K(\tilde A)}{nm} \log (en) \le 5 \Big(\frac{\sigma^2 V(A)\log n}{n}  \Big)^{2/3} + \frac{5 \sigma^2}{n}\log n \, .
\end{equation}
Plugging \eqref{eq:global-triangle} and \eqref{eq:global-rate-bound} into the right-hand side of \eqref{eq:oracle-loss} and \eqref{eq:oracle-mse}, and then minimizing over $A \in \mathcal U^m$, we complete the proof.

\section{Metric entropy} \label{sec:covering}

In this section, we study various \emph{covering numbers} or \emph{metric entropy} related to the parameter space of the model \eqref{eq:model}. First recall some standard definitions that date back at least to \cite{KolTih61}. An $\varepsilon$-net of a subset $G\subset {\R}^n$ with respect to a norm $\|\cdot\|$ is a set $\{w_1,\cdots, w_N\}\subset G$ such that for any $w\in G$, there exists $i\in [N]$ for which $\| w-w_i\|\leq \varepsilon$. The covering number $N(G,\|\cdot\|,\varepsilon)$ is the cardinality of the smallest $\varepsilon$-net with respect to the norm $\|\cdot\|$. Metric entropy is defined as the logarithm of a covering number. In the following, we will consider the Euclidean norm unless otherwise specified.

\subsection{Cartesian product of cones}

Lemma~\ref{lem:cover-product} below bounds covering numbers of product spaces and is useful in later proofs. We start with a well-known result on the covering number of a Euclidean ball with respect to the $\ell_\infty$-norm (see e.g. \cite[Lemma~7.14]{Mas07} for an analogous result). 

\begin{lemma} \label{lem:cube-cover-ball}
For any $\varepsilon\in(0,1]$,
\[
N\Big(\mathcal B^m(0, 1), \|\cdot\|_\infty, \frac{\varepsilon}{\sqrt m}\Big) \le (C/\varepsilon)^m, 
\]
for some constant $C>0$.
\end{lemma}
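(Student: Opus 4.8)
The plan is to prove the claim by a standard volumetric argument. First I would recall the greedy construction of a maximal $\frac{\varepsilon}{\sqrt m}$-separated set (with respect to $\|\cdot\|_\infty$) inside $\mathcal B^m(0,1)$: any such set is automatically a $\frac{\varepsilon}{\sqrt m}$-net, so it suffices to bound its cardinality. If $\{w_1,\dots,w_N\}$ is $\frac{\varepsilon}{\sqrt m}$-separated in $\ell_\infty$, then the open $\ell_\infty$-balls of radius $\frac{\varepsilon}{2\sqrt m}$ centered at the $w_i$ are pairwise disjoint, and each is contained in a slight enlargement of $\mathcal B^m(0,1)$.

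The key step is the volume comparison. Each disjoint $\ell_\infty$-ball of radius $\frac{\varepsilon}{2\sqrt m}$ is an axis-aligned cube of side length $\frac{\varepsilon}{\sqrt m}$, hence has Lebesgue volume $\bigl(\frac{\varepsilon}{\sqrt m}\bigr)^m$. Since $w_i \in \mathcal B^m(0,1)$ and the cube around $w_i$ has $\ell_\infty$-radius $\frac{\varepsilon}{2\sqrt m}$, which corresponds to an $\ell_2$-radius at most $\frac{\varepsilon}{2} \le \frac12$, every such cube is contained in $\mathcal B^m(0, 3/2)$. Therefore
\[
N \cdot \Big(\frac{\varepsilon}{\sqrt m}\Big)^m \le \operatorname{vol}\bigl(\mathcal B^m(0, 3/2)\bigr) = (3/2)^m \operatorname{vol}\bigl(\mathcal B^m(0,1)\bigr).
\]
Using the crude bound $\operatorname{vol}(\mathcal B^m(0,1)) \le \operatorname{vol}([-1,1]^m) = 2^m$ (or more precisely $\pi^{m/2}/\Gamma(m/2+1) \le (2\pi e/m)^{m/2}$, but the cruder bound is enough here), we get $N \le (3/2)^m \cdot 2^m \cdot \bigl(\frac{\sqrt m}{\varepsilon}\bigr)^m = \bigl(\frac{3\sqrt m}{\varepsilon}\bigr)^m$. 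To absorb the $\sqrt m$ factor and obtain the claimed bound $(C/\varepsilon)^m$ with a dimension-free constant $C$, I would instead enclose each cube in $\mathcal B^m_\infty(0, 1 + \frac{\varepsilon}{2\sqrt m}) \subset [-\tfrac32,\tfrac32]^m$ in the $\ell_\infty$ metric directly: the $N$ disjoint cubes of side $\frac{\varepsilon}{\sqrt m}$ all lie in $[-3/2,3/2]^m$, so $N \cdot \bigl(\frac{\varepsilon}{\sqrt m}\bigr)^m \le 3^m$, giving $N \le \bigl(\frac{3\sqrt m}{\varepsilon}\bigr)^m$ again — so the $\sqrt m$ persists under this naive approach.

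The main obstacle is precisely eliminating the $\sqrt m$: the bound $N(\mathcal B^m(0,1), \|\cdot\|_\infty, \delta) \le (C/\delta)^m$ with $\delta = \varepsilon/\sqrt m$ would give $(C\sqrt m/\varepsilon)^m$, which is \emph{not} of the claimed form. The resolution is that one should not cover the $\ell_2$-ball by $\ell_\infty$-cubes of side $\varepsilon/\sqrt m$ via a generic volumetric bound, but exploit that the $\ell_2$-ball is much smaller than the circumscribed $\ell_\infty$-ball: specifically $\mathcal B^m(0,1)$ is contained in the union of $\ell_\infty$-balls of radius $\frac{\varepsilon}{\sqrt m}$ centered at the points of the scaled integer lattice $\frac{\varepsilon}{\sqrt m}\mathbb Z^m$ that lie within $\ell_2$-distance $1 + \varepsilon$ of the origin, and the number of such lattice points is $\operatorname{vol}(\mathcal B^m(0, 1+\varepsilon + \varepsilon))/(\varepsilon/\sqrt m)^m \le (2e\pi/m)^{m/2}(1+2\varepsilon)^m (\sqrt m/\varepsilon)^m \le (C/\varepsilon)^m$, where the factor $m^{-m/2}$ coming from the small volume of the Euclidean ball cancels the $m^{m/2}$ from the lattice spacing. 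Thus the plan is: (i) take the lattice $\frac{\varepsilon}{\sqrt m}\mathbb Z^m$; (ii) keep only lattice points whose $\ell_\infty$-ball of radius $\frac{\varepsilon}{\sqrt m}$ meets $\mathcal B^m(0,1)$ — these form a valid $\frac{\varepsilon}{\sqrt m}$-net (after pushing them into $\mathcal B^m(0,1)$, adjusting constants); (iii) bound their number by a volume argument using $\operatorname{vol}(\mathcal B^m(0,r)) \le (Cr/\sqrt m)^m$, and observe the $\sqrt m$ cancels. This is the one genuinely non-routine point; everything else is bookkeeping of constants.
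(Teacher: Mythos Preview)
Your final plan is correct and rests on the same key fact the paper uses, namely that $\operatorname{vol}(\mathcal B^m(0,r)) \le (Cr/\sqrt m)^m$, so that the $m^{-m/2}$ from the Euclidean ball cancels the $m^{m/2}$ from the cube side length. But you took an unnecessary detour. Your \emph{first} argument --- the maximal $\ell_\infty$-packing with disjoint cubes contained in a slightly inflated Euclidean ball --- is exactly the paper's proof, and it already works: in your own display
\[
N \cdot \Big(\frac{\varepsilon}{\sqrt m}\Big)^m \le \operatorname{vol}\bigl(\mathcal B^m(0, 3/2)\bigr),
\]
you should not bound the right side crudely by $(3/2)^m \cdot 2^m$ but rather use $\operatorname{vol}(\mathcal B^m(0, 3/2)) = (3/2)^m \pi^{m/2}/\Gamma(m/2+1) \le (C/\sqrt m)^m$. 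Then $N \le (C/\sqrt m)^m (\sqrt m/\varepsilon)^m = (C/\varepsilon)^m$ immediately, with no lattice construction needed. The paper does precisely this: disjoint cubes from a maximal packing sit inside $\mathcal B^m(0,2)$, and the volume ratio gives the bound in one line. Your lattice-covering variant is a valid alternative route to the same cancellation, just slightly heavier on bookkeeping (and the ``pushing into $\mathcal B^m(0,1)$'' step for the $\ell_\infty$ metric needs a word of care, since Euclidean projection is not $\ell_\infty$-nonexpansive).
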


\begin{proof}
We aim at bounding the covering number of a Euclidean ball by cubes. Let $\{x^1, \dots, x^M\}$ be a maximal $\frac{\varepsilon}{\sqrt m}$-packing of $\mathcal B^m(0, 1)$ with respect to the $\ell_\infty$-norm, where a $\delta$-packing of a set $G$ with respect to a norm $\|\cdot\|$ is a set $\{w_1,\cdots,w_N\}\subset G $ such that $\|w_i-w_j\|\ge \delta$ for all distinct $i,j\in[N]$.  Then this set is necessarily an $\frac{\varepsilon}{\sqrt m}$-net of $\mathcal B^m(0, 1)$ by maximality, so $N(\mathcal B^m(0, 1), \|\cdot\|_\infty, \frac{\varepsilon}{\sqrt m}) \le M$. Consider the cubes with side length $\frac{\varepsilon}{\sqrt m}$ centered at $x^i$ for $1 \le i \le M$. These cubes are disjoint and contained in the set $\mathcal B^m(0, 1)+Q^m(\frac{\varepsilon}{\sqrt m})$, where $Q^m(\frac{\varepsilon}{\sqrt m})$ is the cube with side length $\frac{\varepsilon}{\sqrt m}$ centered at the origin in ${\R}^m$. Since $Q^m(\frac{\varepsilon}{\sqrt m}) \subset \mathcal B^m(0, \varepsilon)$,
\begin{align*}
M \operatorname{Vol}\Big(Q^m\big(\frac{\varepsilon}{\sqrt m}\big)\Big) &\le \operatorname{Vol}\Big(\mathcal B^m(0, 1)+Q^m\big(\frac{\varepsilon}{\sqrt m}\big)\Big)\\
&\le \operatorname{Vol}(\mathcal B^m(0, 1+ \varepsilon))\\
&\le \operatorname{Vol}(\mathcal B^m(0, 2)). 
\end{align*}
This proves the following bound on the covering number in terms of a volume ratio:
\[ 
N\Big(\mathcal B^m(0, 1), \|\cdot\|_\infty, \frac{\varepsilon}{\sqrt m}\Big) \le \frac{\operatorname{Vol}(\mathcal B^m(0, 2))}{\operatorname{Vol}(Q^m(\frac{\varepsilon}{\sqrt m}))} \le \frac{C^m m^{-m/2}} {\varepsilon^{m} m^{-m/2}} = (C/\varepsilon)^m.
\]
\end{proof}

Now we study the metric entropy of a Cartesian product of convex cones. 
Let $\{I_i\}_{i=1}^m$ be a partition of $[n]$ with $|I_i|=n_i$ and $\sum_{i=1}^m n_i = n$. For $a \in {\R}^n$,  the restriction of $a$ to the coordinates in $I_i$ is denoted by $a_{I_i}\in {\R}^{n_i}$. Let $\mathcal C_i$ be a convex cone in ${\R}^{n_i}$ and $\mathcal C = \mathcal C_1 \times \cdots \times \mathcal C_m$. 

\begin{lemma} \label{lem:cover-product}
With the notation above, suppose that $a_{I_i} \in \mathcal C_i \cap (-\mathcal C_i)$. Then for any $t>0$ and $\varepsilon\in (0,t]$,
\[
\log N \big(\mathcal C\cap \mathcal B^n(a,t), \|\cdot\|_2, \varepsilon \big) \le m \log \frac{Ct}{\varepsilon} + \sum_{i=1}^m \log N\Big(\mathcal C_i \cap \mathcal B^{n_i}(a_{I_i},t), \|\cdot\|_2, \frac{\varepsilon}3\Big)
\]
for some constant $C>0$.
\end{lemma}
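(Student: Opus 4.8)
The plan is to build an $\varepsilon$-net by a two-scale construction: a coarse net for the profile of block-norms $(\|x_{I_1}-a_{I_1}\|_2,\dots,\|x_{I_m}-a_{I_m}\|_2)$, and, for each block, a net of the \emph{normalized} cone, exploiting that the center $a$ lies in the lineality space of each factor.

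\textbf{Step 1 (reduction to normalized cones).} Put $\Gamma_i:=\mathcal C_i\cap\mathcal B^{n_i}(0,1)$. First I would show that the hypothesis $a_{I_i}\in\mathcal C_i\cap(-\mathcal C_i)$ forces $\mathcal C_i\cap\mathcal B^{n_i}(a_{I_i},s)=a_{I_i}+s\,\Gamma_i$ for every $s\ge 0$: a convex cone is closed under addition and under nonnegative scaling, so $x\in\mathcal C_i\iff x-a_{I_i}\in\mathcal C_i$, and then dividing by $s$ identifies the ball-intersection with a scaled copy of $\Gamma_i$. By translation- and scale-invariance of Euclidean covering numbers this gives $N(\mathcal C_i\cap\mathcal B^{n_i}(a_{I_i},s),\|\cdot\|_2,\delta)=N(\Gamma_i,\|\cdot\|_2,\delta/s)$; in particular $N(\Gamma_i,\|\cdot\|_2,\xi)=N(\mathcal C_i\cap\mathcal B^{n_i}(a_{I_i},t),\|\cdot\|_2,t\xi)$, which is exactly the form appearing on the right-hand side of the claim.

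\textbf{Step 2 (the two-scale net).} Given $x\in\mathcal C\cap\mathcal B^n(a,t)$, set $r_i=\|x_{I_i}-a_{I_i}\|_2$, so $\sum_i r_i^2\le t^2$, and when $r_i>0$ write $x_{I_i}=a_{I_i}+r_i v_i$ with $v_i\in\Gamma_i$. I would (i) round each $r_i$ \emph{down} to the nearest multiple $\bar r_i$ of $\varepsilon/(3\sqrt m)$; since $\bar r_i\le r_i$, the profile $(\bar r_1,\dots,\bar r_m)$ lies on a lattice inside $\mathcal B^m(0,t)$, so a lattice-point/volume count (equivalently a rescaled Lemma~\ref{lem:cube-cover-ball}) bounds the number of such profiles by $(Ct/\varepsilon)^m$, using $\varepsilon\le t$; and (ii) for each block pick $\hat v_i$ in a $\xi$-net $\mathcal N_i$ of $\Gamma_i$ with $\|v_i-\hat v_i\|_2\le\xi$, and set $y_{I_i}=a_{I_i}+\bar r_i\hat v_i$. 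Then $y\in\mathcal C$ (each $y_{I_i}\in\mathcal C_i$ because $\mathcal C_i$ is closed under addition and nonnegative scaling) and $y\in\mathcal B^n(a,t)$ (because $\|y_{I_i}-a_{I_i}\|_2\le\bar r_i\le r_i$, hence $\|y-a\|_2^2\le\sum_i r_i^2\le t^2$), so the net is genuinely internal as the definition of covering number requires. The block error splits as $\|x_{I_i}-y_{I_i}\|_2\le(r_i-\bar r_i)+\bar r_i\xi\le \varepsilon/(3\sqrt m)+r_i\xi$, and summing over blocks with the $\ell_2$ triangle inequality gives $\|x-y\|_2\le \varepsilon/3+\xi t$. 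Taking $\xi=2\varepsilon/(3t)$ makes this $\le\varepsilon$, and by Step 1, $|\mathcal N_i|=N(\Gamma_i,2\varepsilon/(3t))=N(\mathcal C_i\cap\mathcal B^{n_i}(a_{I_i},t),2\varepsilon/3)\le N(\mathcal C_i\cap\mathcal B^{n_i}(a_{I_i},t),\varepsilon/3)$. Multiplying the $(Ct/\varepsilon)^m$ profiles by $\prod_i|\mathcal N_i|$ and taking logarithms yields the stated bound.

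\textbf{Where the difficulty lies.} The delicate point is producing an honest internal net of radius $\le\varepsilon$ while keeping the per-block resolution of the form $\varepsilon/(\text{const})$ rather than $\varepsilon/(\text{const}\cdot\sqrt m)$; the first naive attempt (product of $\varepsilon/\sqrt m$-nets) gives the worse resolution. The adaptive rounding is exactly what fixes this: the direction error contributes $\sum_i(r_i\xi)^2\le t^2\xi^2$, which carries no factor of $m$, while the radius-rounding error is $m$ copies of $(\varepsilon/(3\sqrt m))^2$, which also sums to $O(\varepsilon^2)$. Balancing these two error sources with clean constants, and verifying that rounding \emph{down} keeps every constructed net point inside $\mathcal C\cap\mathcal B^n(a,t)$, is the part I would carry out most carefully; the rest is the structural identity of Step 1 and routine covering-number bookkeeping.
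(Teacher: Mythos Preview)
Your proof is correct and follows essentially the same two-scale strategy as the paper: discretize the profile of block norms $(r_1,\dots,r_m)\in\mathcal B^m(0,t)$ via an $\ell_\infty$-type covering (your lattice rounding, the paper's Lemma~\ref{lem:cube-cover-ball}), then cover each block at a resolution proportional to its radius, using the lineality-space hypothesis to identify $\mathcal C_i\cap\mathcal B^{n_i}(a_{I_i},s)$ with $a_{I_i}+s\,\Gamma_i$. Your polar decomposition $x_{I_i}=a_{I_i}+r_i v_i$ and the explicit downward rounding (which guarantees the net is internal to $\mathcal C\cap\mathcal B^n(a,t)$) are slightly cleaner than the paper's execution, but the underlying idea is identical.
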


\begin{proof}
Since a product of balls $\mathcal B^{n_1}(0,\frac{\varepsilon}{\sqrt{m}})\times\cdots\times \mathcal B^{n_m}(0,\frac{\varepsilon}{\sqrt{m}})$ is contained in $\mathcal B^{n}(0, \varepsilon)$, one could try to cover $\mathcal C\cap \mathcal B^n(a,t)$ by such products of balls. It turns out that this yields an upper bound of order  $m^{3/2}$, which is too loose for our purpose. Fortunately, the following argument corrects this dependency.

Without loss of generality, we assume that $t=1$.
We construct a $3 \varepsilon$-net of $\mathcal C \cap \mathcal B^n(a, 1)$ as follows. First, let $\mathcal N_\mathcal B$ be an  $\frac{\varepsilon}{2 \sqrt m}$-net of $\mathcal B^m(0, 1)$ with respect to the $\ell_\infty$-norm. Define 
\[
\mathcal N_\mathcal D = \Big\{\mu \in \mathcal N_\mathcal B: \min_{i\in[m]}\mu_i \ge - \frac{1}{2 \sqrt m} \Big\}. 
\]
Note that $\mu_i + \frac 1{\sqrt m} >0$ for $\mu \in \mathcal N_\mathcal D$, and let $\mathcal N_{\mu_i}$ be a $(\mu_i + \frac 1{\sqrt m})\varepsilon$-net of $\mathcal C_i \cap \mathcal B^{n_i}(a_{I_i}, \mu_i+ \frac 1{\sqrt m})$.
Define $\mathcal N_\mu = \mathcal N_{\mu_1} \times \cdots \times \mathcal N_{\mu_m}$, i.e.,
\[
\mathcal N_\mu = \{w \in {\R}^n: w = (w_{I_1}, \cdots, w_{I_m}), \ w_{I_i} \in \mathcal N_{\mu_i}\}. 
\]
We claim that $\bigcup_{\mu \in \mathcal N_\mathcal D} \mathcal N_\mu$ is an $3\varepsilon$-net of $\mathcal C \cap \mathcal B^n(a, 1)$.

Fix $v \in \mathcal C \cap \mathcal B^n(a, 1)$. Let $v_{I_i} \in {\R}^{n_i}$ be the restriction of $v$ to the component space ${\R}^{n_i}$. Then $v_{I_i} \in \mathcal C_i$. Let $\lambda \in {\R}^m$ be defined by $\lambda_i = \|v_{I_i}-a_{I_i}\|_2$, so $\|\lambda\|_2 = \|v-a\|_2 \le 1$. Hence we can find $\mu \in \mathcal N_\mathcal B$ such that $\|\mu - \lambda\|_\infty \le \frac{\varepsilon}{2\sqrt m}$. In particular, for all $i \in [m]$, $\mu_i \ge \lambda_i - \frac{\varepsilon}{2\sqrt m}\ge - \frac{1}{2 \sqrt m}$, so $\mu \in \mathcal N_\mathcal D$. Moreover, $\|v_{I_i}-a_{I_i}\|_2 = \lambda_i < \mu_i + \frac{1}{\sqrt m}$ and $v_{I_i} \in \mathcal C_i$, so by definition of $\mathcal N_{\mu_i}$, there exists $w_{I_i} \in \mathcal N_{\mu_i}$ such that $\|w_{I_i} - v_{I_i}\|_2 \le (\mu_i + \frac 1{\sqrt m})\varepsilon$. Let $w = (w_{I_1}, \dots, w_{I_m}) \in \mathcal N_\mu$. Since
\[
\sum_{i=1}^m \mu_i^2 \le \sum_{i=1}^m (\lambda_i + |\lambda_i - \mu_i|)^2 \le \sum_{i=1}^m 2 \lambda_i^2 + \frac{\varepsilon^2}{2} \le  \frac{5}{2}\, , 
\]
we conclude that 
\[ 
\|w- v\|_2^2 \le \sum_{i=1}^m \Big(\mu_i + \frac 1{\sqrt m}\Big)^2 \varepsilon^2 \le 7 \varepsilon^2. 
\]
Therefore $\bigcup_{\mu \in \mathcal N_\mathcal D} \mathcal N_\mu$ is a $3 \varepsilon$-net of $\mathcal C \cap \mathcal B^n(a, 1)$.

It remains to bound the cardinality of this net. By Lemma~\ref{lem:cube-cover-ball}, $ |\mathcal N_\mathcal D| \le |\mathcal N_\mathcal B| \le ( C/\varepsilon)^m. $
Moreover, recall that $\mathcal N_{\mu_i}$ is a $(\mu_i + \frac 1{\sqrt m})\varepsilon$-net of $\mathcal C_i \cap \mathcal B^{n_i}(a_{I_i}, \mu_i+ \frac 1{\sqrt m})$.
Since $a_{I_i} \in \mathcal C_i \cap (-\mathcal C_i)$,
for any $t>0$, $\mathcal C_i \cap \mathcal B^{n_i}(a_{I_i}, t)=\{x+a_{I_i} : x\in \mathcal C_i \cap \mathcal B^{n_i}(0, t) \}.$
Hence we can choose the net so that
\begin{align*}
|\mathcal N_{\mu_i}|&=  N\Big(\mathcal C_i \cap \mathcal B^{n_i}\big(0, \mu_i+ \frac 1{\sqrt m}\big), \|\cdot\|_2, \big(\mu_i+ \frac 1{\sqrt m}\big) \varepsilon\Big ) \\
&=  N(\mathcal C_i \cap \mathcal B^{n_i}(0, 1), \|\cdot\|_2, \varepsilon)\\
&= N(\mathcal C_i \cap \mathcal B^{n_i}(a_{I_i}, 1), \|\cdot\|_2, \varepsilon) \,.
\end{align*} 
As $|\mathcal N_{\mu}| \le \prod_{i=1}^m |\mathcal N_{\mu_i}|$, therefore
\[
\Big|\bigcup_{\mu \in \mathcal N_\mathcal D} \mathcal N_\mu\Big| 
\le \Big( \frac{C}{\varepsilon} \Big)^m \prod_{i=1}^m N(\mathcal C_i \cap \mathcal B^{n_i}(a_{I_i}, 1), \|\cdot\|_2, \varepsilon) \,.
\]
Taking the logarithm completes the proof.
\end{proof}

\subsection{Unimodal vectors and matrices}

Recall that $\mathcal S_n$ denotes the closed convex cone of increasing vectors in ${\R}^n$. First, we prove a result on the metric entropy of $\mathcal S_n$ intersecting with a ball using Lemma~\ref{lem:cover-product}. 

\begin{lemma} \label{lem:cover-vector-centered}
Let $b \in {\R}^n$ be such that  $b_1 = \cdots = b_n$. Then for any $t > 0$ and $\varepsilon > 0$,
\[ \log N(\mathcal S_n \cap \mathcal B^n(b, t), \|\cdot\|_2, \varepsilon) \le C \varepsilon^{-1} t  \log(en). \]
\end{lemma}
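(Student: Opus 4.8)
The plan is to reduce the problem to a known entropy bound for bounded monotone sequences. Since $b_1 = \cdots = b_n$, the cone $\mathcal{S}_n$ is invariant under translation by $b$, so $\mathcal{S}_n \cap \mathcal{B}^n(b,t) = b + \big(\mathcal{S}_n \cap \mathcal{B}^n(0,t)\big)$ and the covering number equals $N(\mathcal{S}_n \cap \mathcal{B}^n(0,t), \|\cdot\|_2, \varepsilon)$. By scaling $a \mapsto a/t$, it further equals $N(\mathcal{S}_n \cap \mathcal{B}^n(0,1), \|\cdot\|_2, \varepsilon/t)$, so it suffices to prove $\log N(\mathcal{S}_n \cap \mathcal{B}^n(0,1), \|\cdot\|_2, \delta) \le C\delta^{-1}\log(en)$ for all $\delta > 0$ (the case $\delta \ge 1$ being trivial since the set has diameter $2$ and one point suffices up to a constant, or just absorb it into $C$).

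The core of the argument is the classical fact, going back to van de Geer / Birman--Solomjak and refined in the references cited in the paper's Discussion section, that the class of increasing sequences in $[0,1]^n$ has metric entropy in the empirical $\ell_2$ norm bounded by $C\delta^{-1}\log(en/\delta)$ or similar; here the cleaner route is to note that $\mathcal{S}_n \cap \mathcal{B}^n(0,1) \subset \{a \in \mathcal{S}_n : V(a) \le 2\}$ after recentering, since $\|a\|_2 \le 1$ forces $a_n - a_1 = V(a) \le 2\|a\|_2 \le 2$, and then invoke the standard bound $\log N(\{a \in \mathcal{S}_n : V(a) \le L\}, \|\cdot\|_2, \delta) \le C (L\sqrt{n}/\delta \cdot \text{(something)})$ — but that gives a $\sqrt{n}$ that we do not want. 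The better approach, matching the $\log(en)$ dependence, is to use Lemma~\ref{lem:cover-product} directly: decompose $[n]$ into dyadic blocks and bound the entropy of a monotone sequence on a ball by that of constant-block approximations. Concretely, one partitions the set of indices according to the quantile structure of the monotone vector, approximates on each block by its average, and controls the number of blocks needed as a function of $\delta$; the product structure of such piecewise-constant monotone vectors is exactly what Lemma~\ref{lem:cover-product} is designed to handle, with each factor being a one-dimensional (hence trivially covered) interval. Summing the $\log$ of the per-block covering numbers against the $m$-dependent prefactor $m\log(Ct/\varepsilon)$ from Lemma~\ref{lem:cover-product}, with $m$ equal to the number of blocks ($\asymp \delta^{-1}$ after optimization), yields the claimed $C\delta^{-1}\log(en)$.

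The main obstacle is getting the dependence on $n$ to be only logarithmic rather than polynomial: a naive covering of $\mathcal{S}_n \cap \mathcal{B}^n(0,1)$ coordinate-by-coordinate, or via an $\ell_\infty$-ball, produces a spurious power of $n$. The resolution is the block-approximation bookkeeping — one must choose the number of monotone "levels" and the block boundaries so that the approximation error is $O(\delta)$ in $\ell_2$ while the number of free parameters stays $O(\delta^{-1})$, and then the $\log n$ enters only through the number of ways to place $O(\delta^{-1})$ breakpoints among $n$ positions, which is $\binom{n}{O(\delta^{-1})} \le (en)^{O(\delta^{-1})}$, giving $\log$ of that equal to $O(\delta^{-1}\log(en))$. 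I expect the cleanest write-up invokes the analogous vector result already available in the literature (the paper cites \cite{BirSol67, AnuBabGod79, Gee91} for monotone/bounded-variation classes) or mirrors the proof of \cite[Lemma~2]{BelTsy15}, and then simply tracks the centering and scaling reductions above.
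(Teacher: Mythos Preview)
Your reductions (translate by $b$, rescale by $t$) are correct, and you do mention the right high-level structure: apply Lemma~\ref{lem:cover-product} after decomposing $[n]$ into dyadic blocks. But the execution you then spell out is a different argument, and that argument has a genuine gap.

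You propose to approximate $a\in\mathcal S_n\cap\mathcal B^n(0,1)$ by a piecewise-constant monotone vector with $m\asymp\delta^{-1}$ pieces and then count breakpoint placements via $\binom{n}{m}$. The trouble is the approximation step: it is \emph{not} true that $O(\delta^{-1})$ constant pieces always suffice for $\ell_2$-accuracy $\delta$. The only general approximation result available here is Lemma~\ref{lem:vector-approximation}, which gives $\|a-\tilde a\|_2\le \sqrt n\,V(a)/(2k)$; since $V(a)$ can be as large as $2$ on the unit ball, this forces $k\gtrsim\sqrt n/\delta$, and your breakpoint count then yields entropy of order $\sqrt n\,\delta^{-1}$ rather than $\delta^{-1}\log(en)$ --- exactly the polynomial blow-up you were trying to avoid.

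The paper's proof does use dyadic blocks $A_j=[2^{j-1},2^j)$ (so only $k\asymp\log_2 n$ factors in Lemma~\ref{lem:cover-product}), but it does \emph{not} approximate by constants on each block. Instead it invokes the bound $\log N(\mathcal S_m\cap[c,d]^m,\|\cdot\|_2,\varepsilon)\le C\sqrt m\,(d-c)/\varepsilon$ from \cite{Cha14} on each block. The crucial missing ingredient in your sketch is the coordinatewise decay $|a_i|\le 1/\sqrt{i}$ for $a\in\mathcal S_n\cap\mathcal B^n(0,1)$ (since if $a_i>0$ then $a_j\ge a_i$ for $j\ge i$, and symmetrically for $a_i<0$). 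This gives range $d_j-c_j\lesssim 2^{-j/2}$ on the $j$-th dyadic block of size $m_j\le 2^j$, so $\sqrt{m_j}(d_j-c_j)\lesssim 1$ and each of the $\log_2 n$ blocks contributes $O(\varepsilon^{-1})$ to the entropy. That balancing --- block size growing like $2^j$ while the sup-norm shrinks like $2^{-j/2}$ --- is what produces the $\log(en)$ dependence, and it is absent from your quantile/breakpoint scheme.
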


\begin{proof}
The majority of the proof is due to Lemma~5.1 in an old version of \cite{ChaLaf15}, but we improve their result by a factor $\sqrt{\log n}$ and provide the whole proof for completeness.

The bound holds trivially if $\varepsilon > t$, since the left-hand side is zero. It also clearly holds when $n = 1$. Hence we can assume without loss of generality that $\varepsilon \le t$ and $n = 2n' \ge 2$. Moreover, assume that $t  = 1$ for simplicity and the proof will work for any $t > 0$. Let $I=\{1, \dots, n'\}$ and observe that  
\[
\log N(\mathcal S_n \cap \mathcal B^n(b, 1), \|\cdot\|_2, \varepsilon)\leq 2\log N(\mathcal S_{n'} \cap \mathcal B^{n'}(b_{I}, 1), \|\cdot\|_2, \varepsilon/\sqrt 2 \, ) \, .
\]
Let $k$ be the smallest integer for which $2^k>n'$. We partition $I$ into $k$  blocks $A_j= I \cap [2^j, 2^{j+1})$ for $j \in [k]$ and let $m_j=|A_j|$.
Since $\mathcal S_{n'} \subset \mathcal S_{m_1}\times\cdots\times  \mathcal S_{m_k}$, Lemma~\ref{lem:cover-product} yields that
\begin{multline}\label{eq:sumcover}
\log N\big(\mathcal S_{n'} \cap \mathcal B^{n'}(b_{I}, 1), \|\cdot\|_2, \varepsilon/\sqrt{2}\, \big) \\
\leq k \log \frac{C}{\varepsilon} +\sum_{j=1}^k \log N\Big(\mathcal S_{m_j} \cap \mathcal B^{m_j}(b_{A_j},1), \|\cdot\|_2, \frac{\varepsilon}{3\sqrt{2}}\Big) \,.
\end{multline}

We know from \cite[Lemma 4.20]{Cha14} that for any $c\le d$ and $n\geq 1$,
\[
\log N\Big(\mathcal S_{n}\cap [c,d]^n \cap \mathcal B^{n}(b,1), \|\cdot\|_2, {\varepsilon}\Big)\leq \frac{C\sqrt{n}(d-c)}{\varepsilon} \,.
\] 
For each $a\in \mathcal S_{n} \cap \mathcal B^{n}(0, 1)$, it holds that $|a_i|\leq \frac{1}{\sqrt{i}}$ for $i\in I$ (since either $|a_l| \ge |a_i|$ for all $l \le i$ or $|a_l| \ge |a_i|$ for all $i \le l \le n$; see e.g. \cite{DaiRigXiaZha14}), so $\max _{i\in A_j}|a_i|\leq 2^{-j/2}$.  Also $m_j \le 2^j$, so we get that
\[
\log N\Big(\mathcal S_{m_j} \cap \mathcal B^{m_j}(b_{A_j},1), \|\cdot\|_2, \frac{\varepsilon}{3\sqrt{2}}\Big)\leq \frac{C }{\varepsilon}
\]
for all $j \in [k]$. Substituting this bound into \eqref{eq:sumcover} and noting that $k \le \log_2 n$, we reach the conclusion
\[
\log N(\mathcal S_{n'} \cap \mathcal B^{n'}(b_{I}, 1), \|\cdot\|_2, \varepsilon/\sqrt{2}) \le C \varepsilon^{-1} \log (en) \,.
\]
\end{proof}


Next, we study the metric entropy of the set of matrices with unimodal columns.
Recall that $\mathcal C_l=\{a \in {\R}^n: a_1 \le \cdots \le a_l\}\cap\{a \in {\R}^n: a_l\geq \cdots \geq a_n\}$ for $l \in [n]$. 
For $\mathbf l = (l_1, \dots, l_m) \in [n]^m$,  define $\mathcal C_\mathbf{l}^m=\mathcal C_{l_1}\times \cdots\times \mathcal C_{l_m}$. 
Moreover, for $A \in \R^{n \times m}$, $t>0$ and $\mathcal C \subset \R^{n \times m}$, define 
\begin{align}
\Theta_{\mathcal C}(A, t) &= \bigcup_{\lambda \ge 0} \{B - \lambda A: B \in \mathcal C \cap \mathcal B^{nm}(\lambda A, t) \} \label{eq:def-theta} \\
&= \bigcup_{\lambda \ge 0} \Big(\mathcal C \cap \mathcal B^{nm}(\lambda A, t) - \lambda A \Big) . \nonumber
\end{align}
Note that in particular $\Theta_{\mathcal C}(A, t) \subset \mathcal B^{nm}(0, t)$.

\begin{lemma} \label{lem:cover-matrix-uni}
Given $A \in {\R}^{n\times m}$ and $\mathbf l = (l_1, \dots, l_m) \in [n]^m$, define $k(A_{\cdot,j}) = \card(\{A_{1,j}, \dots, A_{n,j}\})$ and $K(A) = \sum_{j=1}^m k(A_{\cdot,j}).$ Then for any $t > 0$ and $\varepsilon > 0$,
\[ \log N \big( \Theta_{\mathcal C_\mathbf{l}^m}(A, t), \|\cdot\|_F, \varepsilon \big) \le C \varepsilon^{-1} t \, K(A) \log \frac{e n m}{K(A)}. \]
\end{lemma}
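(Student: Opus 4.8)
The plan is to show that $\Theta_{\mathcal C_\mathbf{l}^m}(A,t)$ is contained, after an isometric permutation of coordinates, in a Cartesian product of signed monotone cones intersected with a Euclidean ball centred at the origin, and then to invoke Lemma~\ref{lem:cover-product} together with Lemma~\ref{lem:cover-vector-centered}. Since $N(\cdot,\|\cdot\|_F,\varepsilon)=1$ whenever $\varepsilon>t$ (the set lies in $\mathcal B^{nm}(0,t)$), we may assume throughout that $0<\varepsilon\le t$, identify $n\times m$ matrices with vectors in $\R^{nm}$, and identify $\|\cdot\|_F$ with the Euclidean norm.

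\emph{Block decomposition.} For each column $j\in[m]$ I would first split $[n]$ at $l_j$ into $\{1,\dots,l_j\}$ and $\{l_j+1,\dots,n\}$: any $B_{\cdot,j}\in\mathcal C_{l_j}$ is increasing on the first set and decreasing on the second, so, writing $-\mathcal S_q$ for the cone of decreasing vectors in $\R^q$, we get $\mathcal C_{l_j}\subseteq\mathcal S_{l_j}\times(-\mathcal S_{n-l_j})$ under this splitting. Next, subdivide $\{1,\dots,l_j\}$ into the level sets of $A_{\cdot,j}$ restricted to it — there are $k_j^{\mathrm{up}}\le k(A_{\cdot,j})$ of them — and likewise subdivide $\{l_j+1,\dots,n\}$ into the $k_j^{\mathrm{down}}\le k(A_{\cdot,j})$ level sets of $A_{\cdot,j}$ there. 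The elementary fact that the restriction of an increasing vector to an arbitrary subset is increasing in the induced order (so these level sets need not be intervals) shows that $\mathcal S_{l_j}$ embeds, via the corresponding coordinate permutation, into the product of the cones $\mathcal S_{n_i}$ over its level-set blocks, and similarly $-\mathcal S_{n-l_j}$ into the product of the $-\mathcal S_{n_i}$. Collecting all columns yields a partition of the $nm$ coordinates of $\R^{n\times m}$ into $N_b:=\sum_{j=1}^m(k_j^{\mathrm{up}}+k_j^{\mathrm{down}})\le 2K(A)$ blocks, of sizes $n_i$ with $\sum_i n_i=nm$, together with a product $\mathcal P:=\prod_i\mathcal S_{n_i}^{\pm}$ of signed monotone cones such that $\mathcal C_\mathbf{l}^m\subseteq\mathcal P$.

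\emph{Absorbing the union over $\lambda$.} This is the step needing the most care. On each block $i$ the matrix $A$ is constant, say equal to $c_i\mathbf 1$ there; hence for every $B\in\mathcal C_\mathbf{l}^m$ and every $\lambda\ge 0$, the restriction of $B-\lambda A$ to block $i$ equals $B|_i-\lambda c_i\mathbf 1\in\mathcal S_{n_i}^{\pm}+\R\mathbf 1=\mathcal S_{n_i}^{\pm}$, the last identity because adding a constant vector to a monotone vector leaves it monotone. Therefore $B-\lambda A\in\mathcal P$ for all admissible $B$ and $\lambda$, and so $\Theta_{\mathcal C_\mathbf{l}^m}(A,t)\subseteq\mathcal P\cap\mathcal B^{nm}(0,t)$. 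Since the origin lies in $\mathcal S_{n_i}^{\pm}\cap(-\mathcal S_{n_i}^{\pm})$ for each $i$, Lemma~\ref{lem:cover-product} applies with centre $0$ and gives
\[
\log N\big(\mathcal P\cap\mathcal B^{nm}(0,t),\|\cdot\|_F,\varepsilon\big)\;\le\;N_b\log\frac{Ct}{\varepsilon}+\sum_{i=1}^{N_b}\log N\big(\mathcal S_{n_i}^{\pm}\cap\mathcal B^{n_i}(0,t),\|\cdot\|_2,\tfrac\varepsilon3\big),
\]
and each summand is at most $C\varepsilon^{-1}t\log(en_i)$ by Lemma~\ref{lem:cover-vector-centered}, which applies verbatim to $-\mathcal S_{n_i}$ via the reflection $x\mapsto -x$.

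\emph{Aggregation.} Finally I would combine the two terms. Concavity of $\log$ gives $\sum_i\log(en_i)\le N_b\log(enm/N_b)$, and since $x\mapsto x\log(enm/x)$ is nondecreasing on $(0,nm]$ while $m\le N_b\le 2K(A)$ and $K(A)\le nm$, this is $\lesssim K(A)\log(enm/K(A))$. Moreover, for $\varepsilon\le t$ one has $\log(Ct/\varepsilon)\lesssim t/\varepsilon$, whence $N_b\log(Ct/\varepsilon)\lesssim \varepsilon^{-1}t\,K(A)\le\varepsilon^{-1}t\,K(A)\log(enm/K(A))$ because $enm/K(A)\ge e$. Plugging these into the displayed bound, and noting that passing from the external cover of $\mathcal P\cap\mathcal B^{nm}(0,t)$ to an internal $\varepsilon$-net of $\Theta_{\mathcal C_\mathbf{l}^m}(A,t)$ only doubles the radius and hence costs an absolute constant, yields the claimed inequality. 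The main obstacle is precisely the second step — taming the union over $\lambda\ge 0$ hidden in the definition of $\Theta$ — which is overcome by the identity $\mathcal S_{n_i}^{\pm}+\R\mathbf 1=\mathcal S_{n_i}^{\pm}$, collapsing all the translates $\mathcal C_\mathbf{l}^m-\lambda A$ into a single product of monotone cones centred at the origin; the reduction to products of monotone cones over possibly non-interval level-set blocks is the other point that requires a small amount of care.
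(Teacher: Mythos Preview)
Your proof is correct and follows essentially the same route as the paper: split each column at $l_j$, refine into level sets of $A$ to obtain a product $\mathcal P$ of signed monotone cones containing $\mathcal C_{\mathbf l}^m$, use that $A$ is constant on each block to collapse the union over $\lambda$ (the paper phrases this as $A\in\mathcal S^*\cap(-\mathcal S^*)$ so that $\mathcal S^*\cap\mathcal B^{nm}(\lambda A,t)-\lambda A=\mathcal S^*\cap\mathcal B^{nm}(0,t)$, which is exactly your identity $\mathcal S_{n_i}^{\pm}+\R\mathbf 1=\mathcal S_{n_i}^{\pm}$), then apply Lemma~\ref{lem:cover-product} and Lemma~\ref{lem:cover-vector-centered} and aggregate via concavity of the logarithm. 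The only cosmetic difference is that the paper centres Lemma~\ref{lem:cover-product} at $A$ rather than at $0$, but since $A$ is constant on each block these yield the same set.
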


\begin{proof}
Assume that $\varepsilon \le t$ since otherwise the left-hand side is zero and the bound holds trivially. For $j \in [m]$, define $I^{j, 1} = [l_j]$ and $I^{j, 2} = [n] \setminus [l_j]$. Define $k_{j,1} = k(A_{I^{j,1},j})$ and $k_{j,2} = k(A_{I^{j, 2},j})$. Let $\varkappa = \sum_{j=1}^m (k_{j,1} + k_{j,2})$ and observe that $K(A)\le  \varkappa \le 2 K(A).$
Moreover, let $\{I^{j,1}_1, \dots, I^{j,1}_{k_{j,1}}\}$ be the partition of $I^{j,1}$ such that $A_{I^{j,1}_i, j}$ is a constant vector for $i \in [k_{j,1}]$. Note that elements of $I^{j,1}_i$ need not to be consecutive. Define the partition for $I^{j,2}$ analogously.

For $j \in [m]$ and $i \in [k_{j,1}]$ (resp. $[k_{j,2}]$), let $\mathcal S_{I^{j, 1}_i,j}$ (resp. $\mathcal S_{I^{j, 2}_i,j}$) denote the set of increasing (resp. decreasing) vectors in the component space ${\R}^{|I^{j, 1}_i|}$ (resp. ${\R}^{|I^{j, 2}_i|}$). 
Lemma~\ref{lem:cover-vector-centered} implies that
\[ \
\log N(\mathcal S_{I^{j, r}_i,j} \cap \mathcal B^{|I^{j, r}_i|}(A_{I^{j, r}_i, j}, t), \|\cdot\|_F, \varepsilon) \le C \varepsilon^{-1} t \log (e|I^{j, r}_i|).
\]
As a matrix in $\R^{n \times m}$ can be viewed as a concatenation of $\varkappa = \sum_{j=1}^m (k_{j,1}+k_{j,2})$ vectors of length $|I^{j, r}_i|, r \in [2], j \in [m]$, we define the cone $\mathcal S^*$ in $\R^{n\times m}$ by
$\mathcal S^* = \prod_{j=1}^m \prod_{r=1}^2 \prod_{i=1}^{k_{j, r}} \mathcal S_{I^{j, r}_i,j}$, which is clearly a superset of $\mathcal C_{\mathbf l}^m$.  It also  follows that $A \in \mathcal S^* \cap (-\mathcal S^*)$,
and thus by Lemma~\ref{lem:cover-product} and the previous display,
\begin{align*} 
\log N(\mathcal S^* \cap \mathcal B^{nm}(A, t), \|\cdot\|_F, \varepsilon)
&\le \varkappa \log \frac {C t}\varepsilon + \sum_{j=1}^m \sum_{r=1}^2 \sum_{i=1}^{k_{j, r}} C \varepsilon^{-1} t \log (e |I^{j, r}_i|)  \\
& \le C \varepsilon^{-1} t \, \varkappa + C \varepsilon^{-1} t \, \varkappa \log \frac{e \sum_{j,r,i} |I^{j, r}_i|}{\varkappa} \\
&\le C  \varepsilon^{-1} t \, K(A) \log \frac{enm}{K(A)},
\end{align*}
where we used the  concavity of the logarithm and Jensen's inequality in the second step, and that $K(A) \le \varkappa \le 2K(A)$ in the last step.

Since $A \in \mathcal S^* \cap (-\mathcal S^*)$ (the cone $\cS^*$ is pointed at $A$) we have that $\mathcal S^* \cap \mathcal B^{nm}(\lambda A, t) - \lambda A = \mathcal S^* \cap \mathcal B^{nm}(0, t)$ for any $\lambda \ge 0$. In view of Definition \eqref{eq:def-theta}, it holds
$$
\Theta_{\mathcal S^*}(A, t)=\bigcup_{\lambda\ge 0} \mathcal S^* \cap \mathcal B^{nm}(\lambda A, t) - \lambda A= \mathcal S^* \cap \mathcal B^{nm}(\lambda A, t) - \lambda A\,,\quad \forall \lambda\ge 0\,.
$$ 
In particular, taking $\lambda=1$, we get $\Theta_{\mathcal S^*}(A, t)=\mathcal S^* \cap \mathcal B^{nm}(A, t) - A$.
Moreover, $\mathcal C_{\mathbf l}^m \subset \mathcal S^*$, so that $\Theta_{\mathcal C_{\mathbf l}^m}(A, t) \subset \Theta_{\mathcal S^*}(A, t) = \mathcal S^* \cap \mathcal B^{nm}(A, t) - A$. Thus the metric entropy of $\Theta_{\mathcal C_{\mathbf l}^m}(A, t)$ is subject to the above bound as well.
\end{proof}

Finally, we consider the metric entropy of $\Theta_{\mathcal M}(A, t)$ for $A \in \R^{n \times m}$, $t>0$ and $\mathcal M = \bigcup_{\Pi \in \mathfrak S_n} \Pi \mathcal U^m$. The above analysis culminates in the following lemma which we use to prove the main upper bounds.

\begin{lemma} \label{lem:cover-matrix-union}
Let $A \in \R^{n \times m}$ and $K(A)$ be defined as in the previous lemma. Then for any $\varepsilon > 0$ and $t>0$,
\[ \log N \big(\Theta_{\mathcal M}(A, t), \|\cdot\|_F, \varepsilon \big) \le C \varepsilon^{-1} t\, K(A)  \log \frac{e n m}{K(A)} +  n  \log n. \]
\end{lemma}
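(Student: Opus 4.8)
The plan is to reduce the covering number of $\Theta_{\mathcal M}(A,t)$ to covering numbers of the pieces $\Theta_{\mathcal C_{\mathbf l}^m}(\Pi^{-1} A, t)$ over all permutations $\Pi$ and all mode-vectors $\mathbf l \in [n]^m$, via a union bound, and then invoke Lemma~\ref{lem:cover-matrix-uni} for each piece. Recall that $\mathcal M = \bigcup_{\Pi \in \mathfrak S_n} \Pi \mathcal U^m$ and $\mathcal U^m = \bigcup_{\mathbf l \in [n]^m} \mathcal C_{\mathbf l}^m$, so $\mathcal M$ is the union of the $n!\, n^m$ cones $\Pi \mathcal C_{\mathbf l}^m$. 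Since $\Theta_{\mathcal M}(A,t) = \bigcup_{\Pi, \mathbf l} \Theta_{\Pi \mathcal C_{\mathbf l}^m}(A,t)$ (the operation $\Theta_{\mathcal C}(A,t)$ distributes over unions of sets $\mathcal C$, directly from Definition~\eqref{eq:def-theta}), a union of $\varepsilon$-nets of the individual pieces is an $\varepsilon$-net of the whole, so
\[
N\big(\Theta_{\mathcal M}(A,t), \|\cdot\|_F, \varepsilon\big) \le \sum_{\Pi \in \mathfrak S_n} \sum_{\mathbf l \in [n]^m} N\big(\Theta_{\Pi \mathcal C_{\mathbf l}^m}(A,t), \|\cdot\|_F, \varepsilon\big).
\]

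The next step is to remove the permutation by invariance. Since $\Pi$ is an orthogonal matrix, left-multiplication by $\Pi$ is a Frobenius isometry, and one checks from Definition~\eqref{eq:def-theta} that $\Theta_{\Pi \mathcal C_{\mathbf l}^m}(A,t) = \Pi\, \Theta_{\mathcal C_{\mathbf l}^m}(\Pi^{-1} A, t)$; hence the two sets have the same covering numbers. Now apply Lemma~\ref{lem:cover-matrix-uni} to $\Theta_{\mathcal C_{\mathbf l}^m}(\Pi^{-1} A, t)$. The key observation is that permuting the rows of $A$ does not change the multiset of entries of any column, so $k\big((\Pi^{-1}A)_{\cdot,j}\big) = k(A_{\cdot,j})$ for all $j$, and therefore $K(\Pi^{-1}A) = K(A)$. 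Consequently Lemma~\ref{lem:cover-matrix-uni} gives, uniformly over all $\Pi$ and $\mathbf l$,
\[
\log N\big(\Theta_{\mathcal C_{\mathbf l}^m}(\Pi^{-1}A, t), \|\cdot\|_F, \varepsilon\big) \le C \varepsilon^{-1} t\, K(A) \log \frac{enm}{K(A)}.
\]

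Finally, combine the union bound with this uniform estimate: taking logarithms,
\[
\log N\big(\Theta_{\mathcal M}(A,t), \|\cdot\|_F, \varepsilon\big) \le \log(n!\, n^m) + C \varepsilon^{-1} t\, K(A) \log \frac{enm}{K(A)}.
\]
It remains to bound $\log(n!\, n^m) = \log(n!) + m \log n \le n \log n + m \log n$. Here one uses $\log(n!) \le n \log n$, and the term $m\log n$ is absorbed since $K(A) \ge m$ makes the first (chaining) term dominate the $m\log n$ contribution in all downstream applications; more simply, one notes $m \log n \le n \log n$ is false in general when $m > n$, so instead I would keep the bound as $n\log n + m\log n$ and observe that in the statement only the $n\log n$ term is recorded because $m\log n \le C \varepsilon^{-1} t K(A)\log(enm/K(A))$ whenever $\varepsilon \le t$ (using $K(A)\ge m$ and $\varepsilon^{-1}t \ge 1$), hence it is absorbed into the first term up to the constant $C$. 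This yields the claimed bound $\log N \le C \varepsilon^{-1} t\, K(A)\log(enm/K(A)) + n\log n$.

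The only genuinely delicate point is the bookkeeping in the last paragraph: one must be careful that the factor $m\log n$ coming from the $n^m$ choices of $\mathbf l$ is truly dominated, which relies on $K(A) \ge m$ together with the restriction $\varepsilon \le t$ (for $\varepsilon > t$ the covering number is $1$ and there is nothing to prove). Everything else — distributivity of $\Theta$ over unions, permutation invariance of both the Frobenius norm and of $K(\cdot)$, and the union bound — is routine given Lemma~\ref{lem:cover-matrix-uni}.
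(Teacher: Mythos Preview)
Your proof is correct and follows essentially the same approach as the paper: decompose $\mathcal M$ into the $n!\,n^m$ cones $\Pi\mathcal C_{\mathbf l}^m$, apply the union bound, invoke Lemma~\ref{lem:cover-matrix-uni} on each piece, and absorb the $m\log n$ term using $K(A)\ge m$ together with $\varepsilon\le t$. The only difference is cosmetic---you make the permutation-invariance step $\Theta_{\Pi\mathcal C_{\mathbf l}^m}(A,t)=\Pi\,\Theta_{\mathcal C_{\mathbf l}^m}(\Pi^{-1}A,t)$ and $K(\Pi^{-1}A)=K(A)$ explicit, whereas the paper applies Lemma~\ref{lem:cover-matrix-uni} to $\Theta_{\Pi\mathcal C_{\mathbf l}^m}(A,t)$ directly without spelling this out.
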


\begin{proof}
Assume that $\varepsilon \le t$ since otherwise the left-hand side is zero and the bound holds trivially.
Note that $\mathcal U^m = \bigcup_{\mathbf l \in [n]^m}  \mathcal C_\mathbf l^m$, and that $\mathcal M = \bigcup_{\Pi \in \mathfrak S_n} \Pi \mathcal U^m$. Thus $\mathcal M$ is the union of $n^m n!$ cones of the form $\Pi \mathcal C_\mathbf l^m$. By Definition \eqref{eq:def-theta}, $\Theta_\mathcal M(A, t)$ is also the union of $n^m n!$ sets $\Theta_{\Pi \mathcal C_\mathbf l^m} (A, t)$, each having metric entropy subject to the bound in Lemma~\ref{lem:cover-matrix-uni}. Therefore, a union bound implies that
\begin{align*} 
\log N \big(\Theta_{\mathcal M}(A, t), \|\cdot\|_F, \varepsilon \big) 
&\le \log  N \big(\Theta_{\mathcal C_\mathbf l^m} (A, t), \|\cdot\|_F, \varepsilon \big) + \log(n^m n!) \\
&\le C \varepsilon^{-1} t \, K(A)  \log \frac{e n m}{K(A)} + m \log n +n \log n\\
& \le C \varepsilon^{-1} t \, K(A)  \log \frac{e n m}{K(A)} + n \log n,
\end{align*}
where the last step follows from that $K \log(enm/K) \ge m \log n$ for $m \le K \le nm$ and that $\varepsilon \le t$.
\end{proof}

\section{Proof of the lower bounds} \label{sec:lower}

For minimax lower bounds, we consider the model $Y = \Pi^* A^* + Z$ where entries of $Z$ are i.i.d. $N(0, \sigma^2)$. 
The Varshamov-Gilbert lemma \cite[Lemma~4.7]{Mas07} is a standard tool for proving lower bounds.

\begin{lemma}[Varshamov-Gilbert] \label{lem:vg}
Let $\delta$ denote the Hamming distance on $\{0, 1\}^d$ where $d \ge 2$. Then there exists a subset $\Omega \subset \{0, 1\}^d$ such that $\log |\Omega| \ge d/8$ and $\delta(\omega, \omega') \ge d/4$ for distinct $\omega, \omega' \in \Omega$.
\end{lemma}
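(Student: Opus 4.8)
The plan is to obtain $\Omega$ by a greedy maximal-packing construction and then control its size by a volume argument on Hamming balls. First I would take $\Omega=\{\omega^1,\dots,\omega^N\}\subset\{0,1\}^d$ to be a \emph{maximal} subset with the property that $\delta(\omega^i,\omega^j)\ge d/4$ for all $i\ne j$; such a set exists because $\{0,1\}^d$ is finite, and the separation condition then holds by construction, so the entire task reduces to showing $\log|\Omega|\ge d/8$.

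For the cardinality bound I would use maximality: no point $x\in\{0,1\}^d$ can lie at Hamming distance $\ge d/4$ from every $\omega^i$, for otherwise $\Omega\cup\{x\}$ would again be a valid packing. Hence the balls $B_i=\{x\in\{0,1\}^d:\delta(x,\omega^i)<d/4\}$ cover $\{0,1\}^d$, which gives $N\cdot\max_i|B_i|\ge 2^d$. All balls have the same cardinality $|B_i|=\sum_{k<d/4}\binom{d}{k}\le\sum_{k\le\lfloor d/4\rfloor}\binom{d}{k}$, and the key estimate is the standard entropy bound $\sum_{k\le pd}\binom{d}{k}\le 2^{H(p)d}$ valid for $p\in(0,1/2]$, where $H(p)=-p\log_2 p-(1-p)\log_2(1-p)$. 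I would prove this in one line from $1=(p+(1-p))^d\ge\sum_{k\le pd}\binom{d}{k}p^k(1-p)^{d-k}\ge\big(\sum_{k\le pd}\binom{d}{k}\big)\,p^{pd}(1-p)^{(1-p)d}$, using that $p^k(1-p)^{d-k}\ge p^{pd}(1-p)^{(1-p)d}$ whenever $k\le pd$ since $p/(1-p)\le 1$. Taking $p=1/4$ yields $|B_i|\le 2^{H(1/4)d}$ and hence $N\ge 2^{(1-H(1/4))d}$.

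The final step is a numerical check: $H(1/4)=2-\tfrac34\log_2 3\approx 0.8113$, so $1-H(1/4)\approx 0.1887$ and therefore $\log|\Omega|\ge(1-H(1/4))\,d\log 2\approx 0.1308\,d\ge d/8$ (and if one reads $\log$ as $\log_2$, the bound $1-H(1/4)>1/8$ is even more comfortable). Because $N\ge 2^d/\max_i|B_i|$ is an honest inequality between real numbers before taking logarithms, there is no rounding issue and no need to treat small $d$ separately.

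There is no serious obstacle here; the only point that requires care is having the correct volume bound for Hamming balls and noticing that the entropy constant $1-H(1/4)$ exceeds $1/(8\log 2)$ with a few percent to spare, which is exactly what makes the clean constant $d/8$ come out. An alternative would be the probabilistic deletion method — sample $\omega^1,\dots,\omega^N$ i.i.d.\ uniformly, bound $\p[\delta(\omega^i,\omega^j)<d/4]\le e^{-d/8}$ by Hoeffding's inequality, and discard one endpoint of each violating pair — but this loses an additive constant in $\log|\Omega|$ through the union bound, so the greedy packing argument is the more economical route to exactly the stated bound.
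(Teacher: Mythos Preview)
Your proof is correct. The paper does not actually prove this lemma; it merely quotes it as \cite[Lemma~4.7]{Mas07} and uses it as a black box. Your greedy maximal-packing argument combined with the entropy bound $\sum_{k\le pd}\binom{d}{k}\le 2^{H(p)d}$ is the standard Gilbert--Varshamov proof (and is essentially the argument in Massart's notes as well), and your numerical check that $(1-H(1/4))\log 2\approx 0.131>1/8$ is accurate.
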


We also need the following useful lemma.

\begin{lemma} \label{lem:lower-general}
Consider the model $y =\theta + z$ where  $\theta \in \Theta \subset {\R}^d$ and $z \sim N(0, \sigma^2 I_d)$.
Suppose that $|\Theta| \ge 3$ and for distinct $\theta, \theta' \in \Theta$,
$4\phi \le \|\theta - \theta'\|_2^2 \le \frac{\sigma^2}{8} \log |\Theta|$ where $\phi > 0$.
Then there exists $c>0$ such that
\[ \inf_{\hat \theta} \sup_{\theta \in \Theta} {\p}_\theta \big[ \|\hat \theta - \theta\|_F^2 \ge \phi \big] \ge c. \]
\end{lemma}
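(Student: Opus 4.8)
The plan is to apply a standard information-theoretic reduction from estimation to testing, using Fano's inequality (or more precisely its variant for the average probability of error via mutual information). First I would reduce the estimation problem to a multiple hypothesis testing problem over the finite set $\Theta$: if an estimator $\hat\theta$ satisfies $\|\hat\theta-\theta\|_2^2 < \phi$, then since distinct $\theta,\theta'\in\Theta$ satisfy $\|\theta-\theta'\|_2^2 \ge 4\phi$, the nearest element of $\Theta$ to $\hat\theta$ is uniquely $\theta$; hence the test $\psi(\hat\theta) = \operatorname*{argmin}_{\theta\in\Theta}\|\hat\theta-\theta\|_2$ recovers $\theta$ exactly on the event $\{\|\hat\theta-\theta\|_2^2 < \phi\}$. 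Therefore $\sup_{\theta\in\Theta}\p_\theta[\|\hat\theta-\theta\|_2^2 \ge \phi] \ge \inf_\psi \sup_{\theta\in\Theta}\p_\theta[\psi \ne \theta] \ge \inf_\psi \frac{1}{|\Theta|}\sum_{\theta\in\Theta}\p_\theta[\psi\ne\theta]$, so it suffices to lower bound the average testing error.

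Next I would bound the mutual information between the index $J$ (uniform on $\Theta$) and the observation $y$. Since $y\mid\{\theta=\theta_j\} \sim N(\theta_j,\sigma^2 I_d)$, a convexity bound gives $I(J;y) \le \frac{1}{|\Theta|^2}\sum_{j,j'} \mathrm{KL}\big(N(\theta_j,\sigma^2 I_d)\,\|\,N(\theta_{j'},\sigma^2 I_d)\big) = \frac{1}{|\Theta|^2}\sum_{j,j'}\frac{\|\theta_j-\theta_{j'}\|_2^2}{2\sigma^2} \le \frac{1}{2\sigma^2}\max_{j\ne j'}\|\theta_j-\theta_{j'}\|_2^2 \le \frac{1}{16}\log|\Theta|$, using the hypothesis $\|\theta-\theta'\|_2^2 \le \frac{\sigma^2}{8}\log|\Theta|$. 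Then Fano's inequality yields
\[
\inf_\psi \frac{1}{|\Theta|}\sum_{\theta\in\Theta}\p_\theta[\psi\ne\theta] \ge 1 - \frac{I(J;y)+\log 2}{\log|\Theta|} \ge 1 - \frac{1}{16} - \frac{\log 2}{\log|\Theta|}.
\]
Since $|\Theta|\ge 3$, we have $\log|\Theta| \ge \log 3$, so the right-hand side is bounded below by an absolute constant $c = 1 - \tfrac{1}{16} - \tfrac{\log 2}{\log 3} > 0$. (If the constant coming out this way is deemed too delicate for small $|\Theta|$, one can instead invoke the version of Fano's inequality with a reference distribution, or simply note that the statement only claims existence of some $c>0$; alternatively, for $|\Theta|=3$ one can also argue directly via a two-point or three-point Le Cam bound.)

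Combining the two displays, $\sup_{\theta\in\Theta}\p_\theta[\|\hat\theta-\theta\|_2^2\ge\phi] \ge c$ for every estimator $\hat\theta$, which is the claim (note $\|\cdot\|_F = \|\cdot\|_2$ for vectors, so the statement as written with $\|\hat\theta-\theta\|_F$ is the same). The main obstacle is simply bookkeeping the constants so that the $\log 2$ term in Fano does not swamp the bound when $|\Theta|$ is as small as $3$; this is handled either by the crude bound above together with the fact that only existence of $c>0$ is asserted, or by switching to a sharper form of Fano's inequality. Everything else — the reduction to testing and the KL computation for Gaussians — is routine.
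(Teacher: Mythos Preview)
Your proposal is correct and follows essentially the same approach as the paper: both compute the Gaussian KL divergence, use the upper bound on $\|\theta-\theta'\|_2^2$ to control it by a small fraction of $\log|\Theta|$, and then invoke a Fano-type testing lower bound together with the separation condition $\|\theta-\theta'\|_2^2 \ge 4\phi$. The only cosmetic difference is that the paper packages the reduction-to-testing and Fano step into a single citation of \cite[Theorem~2.5]{Tsy09} (with $M=|\Theta|-1$ and $\alpha=1/10$, after checking $\frac{1}{16}\log|\Theta|\le\frac{1}{10}\log(|\Theta|-1)$ for $|\Theta|\ge 3$), whereas you write out the mutual-information form of Fano explicitly.
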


\begin{proof}
Let ${\p}_\theta$ denote the probability with respect to $\theta + z$. Then the Kullback-Leibler divergence between ${\p}_{\theta}$ and ${\p}_{\theta'}$ satisfies that
\[ \operatorname{KL}({\p}_{\theta}, {\p}_{\theta'}) =  \frac{\|\theta - \theta'\|_F^2}{2\sigma^2} \le \frac{\log |\Theta|}{16} \le \frac{\log (|\Theta|-1)}{10} \,, \]
since $|\Theta| \ge 3$. Applying \cite[Theorem~2.5]{Tsy09} with $\alpha = \frac 1{10}$ gives the conclusion.
\end{proof}

\subsection{Proof of Theorem~\ref{thm:lower-adaptive}}

We define $\mathcal U_{K_0}^m(V_0) = \mathcal U_{K_0}^m \cap \mathcal U^m(V_0)$ and $\mathcal M_{K_0}(V_0) = \bigcup_{\Pi \in \mathfrak S_n} \Pi \, \mathcal U_{K_0}^m(V_0)$. Define the subset of $\mathcal M_{K_0}(V_0)$ containing permutations of monotonic matrices by $\mathcal M_{K_0}^{\mathcal S}(V_0) = \{ \Pi A \in \mathcal M_{K_0}(V_0) : \Pi \in \mathfrak S_n, A \in \mathcal S^m\}$.
Since each estimator pair $(\hat \Pi, \hat A)$ gives an estimator $\hat M = \hat \Pi \hat A$ of $M = \Pi A$, it suffices to prove a lower bound on $\|\hat M - M\|_F^2$. 
In fact, we prove a stronger lower bound than the one in Theorem~\ref{thm:lower-adaptive}.

\begin{proposition} \label{prop:lower-stronger}
Suppose that $K_0 \le m (\frac{16n}{\sigma^2})^{1/3} V_0^{2/3} - m$. Then
\begin{multline} \label{eq:lower-stronger}
\inf_{\hat M} \sup_{M \in \mathcal M_{K_0}(V_0)} {\p}_{M} \Big[ \frac 1{nm} \|\hat M - M\|_F^2 \ge c \sigma^2 \frac{K_0}{nm} \\
+ c \max_{1 \le l \le \min(K_0-m, m)+1} \min\Big(\frac {\sigma^2}m \log l,  m^2 l^{-3} V_0^2\Big) \Big] \ge c'
\end{multline}
for some $c, c'>0$, where ${\p}_{M}$ is the probability with respect to $Y= M+ Z$. This bound remains valid for the parameter subset $\mathcal M_{K_0}^{\mathcal S}(V_0)$ if $l=1$ or $2$.
\end{proposition}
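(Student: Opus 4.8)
The plan is to build two separate packings of $\mathcal{M}_{K_0}(V_0)$, one that captures the ``estimation'' rate $\sigma^2 K_0/(nm)$ and one that captures the ``permutation'' rate, and then invoke Lemma~\ref{lem:lower-general} on each. For the first term, I would fix the identity permutation $\Pi = I_n$ and build a large family inside $\mathcal{U}_{K_0}^m(V_0)$ itself: partition each column into roughly $K_0/m$ blocks of equal size and let the block heights form a monotone (hence unimodal) step function whose increments are chosen from a Varshamov--Gilbert code (Lemma~\ref{lem:vg}) at scale proportional to $\sigma/\sqrt{n/K_0}$, rescaled so the total variation stays below $V_0$ --- the hypothesis $K_0 \le m(16n/\sigma^2)^{1/3} V_0^{2/3} - m$ is exactly what makes this scaling feasible. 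This gives $\log|\Theta| \gtrsim K_0$ pieces pairwise separated by $\gtrsim \sigma^2 K_0$ in squared Frobenius norm, while the pairwise distances stay below $\tfrac{\sigma^2}{8}\log|\Theta|$; Lemma~\ref{lem:lower-general} then yields the $\sigma^2 K_0/(nm)$ term. Since these matrices have monotone columns, this part of the bound automatically lives in $\mathcal{M}_{K_0}^{\mathcal{S}}(V_0)$ as well.

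\textbf{The permutation term.} For the second term, fix an integer $l$ with $2 \le l \le \min(K_0-m,m)+1$ and work with matrices $A$ that are constant (equal to a fixed baseline) on all but $l-1$ columns; on each of those $l-1$ columns put a simple unimodal profile --- say a ``spike'' that is large on a single designated row and small elsewhere, with spike height $h$. Now vary only the permutation $\Pi$: moving the spike row of column $j$ to different positions. The key point is that two such matrices $\Pi A$ and $\Pi' A$ differ in Frobenius norm by an amount controlled by $h$ and the number of columns on which $\Pi,\Pi'$ disagree, and one can pack $\exp(c \cdot (l-1))$ permutations that are pairwise well separated on a constant fraction of the $l-1$ active columns (a routine combinatorial/probabilistic argument, e.g.\ a random construction plus union bound, since there are $n$ choices of spike position per column). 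Choosing $h$ so that $h^2 \asymp \min(\sigma^2, m^3 l^{-3} V_0^2)$ --- the first option when $\sigma$ is the binding constraint, the second when the variation budget $V_0$ is --- and checking that the separation $\asymp (l-1) h^2$ sits between $4\phi$ and $\tfrac{\sigma^2}{8}\log|\Theta| \asymp \sigma^2 (l-1)$ delivers, via Lemma~\ref{lem:lower-general}, a lower bound of order $\tfrac1{nm}\cdot (l-1) h^2 \asymp \tfrac1m \min(\sigma^2 \log l, m^2 l^{-3} V_0^2)$ after noting $l-1 \asymp l \log l$ up to constants is replaced by the correct bookkeeping $\log|\Theta|\gtrsim l$ giving the $\log l$; I would take the maximum over admissible $l$ at the end. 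One must also verify $K(A) \le K_0$ (each spike column contributes at most $2$ distinct values, times $l-1$, plus the constant columns, well within $K_0$ by the constraint $l-1 \le K_0-m$) and $V(A) \le V_0$.

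\textbf{Combining and the obstacle.} Having two families $\Theta_1$ (giving $c\sigma^2 K_0/(nm)$) and $\Theta_2(l)$ (giving the permutation term for each $l$), the final bound follows by the standard ``take the worse of the two'' device: any estimator must have large risk on at least one of the families, so the supremum over $\mathcal{M}_{K_0}(V_0) \supseteq \Theta_1 \cup \bigcup_l \Theta_2(l)$ dominates the maximum of the two rates, which is $\gtrsim$ their sum up to a constant. For the $\mathcal{M}_{K_0}^{\mathcal{S}}(V_0)$ statement, only $l=1$ (no permutation term) and $l=2$ survive, because for $l\ge 3$ the spike-and-permute construction genuinely needs non-monotone columns --- with $l=2$ a single active column with a monotone step still admits two well-separated permutations. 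The main obstacle I anticipate is the simultaneous bookkeeping in the permutation construction: one needs the pairwise Frobenius separations, the cardinality lower bound, the constraint $K(A)\le K_0$, and $V(A)\le V_0$ to all hold for a \emph{single} choice of spike height $h$ and a single well-chosen packing of permutations, and the two regimes in $\min(\sigma^2, m^3 l^{-3}V_0^2)$ have to be handled by a common argument --- getting the $l^{-3}$ exponent right (it comes from $V_0 \asymp l \cdot (\text{per-spike variation})$ combined with the column count, i.e.\ $h \asymp m^{3/2} l^{-3/2} V_0 / \sqrt{\cdots}$) is the delicate part, and is also where the slight upper/lower mismatch flagged after Theorem~\ref{thm:lower-global} originates.
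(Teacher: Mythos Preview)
Your treatment of the first term $\sigma^2 K_0/(nm)$ is essentially the paper's Lemma~\ref{lem:lower-est}: a Varshamov--Gilbert packing of monotone step functions column by column, with the hypothesis $K_0 \le m(16n/\sigma^2)^{1/3}V_0^{2/3}-m$ used exactly as you describe. That part is fine and lives in $\mathcal{M}_{K_0}^{\mathcal{S}}(V_0)$.

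The permutation term, however, has a genuine gap. Your packing has only $l-1$ degrees of freedom --- the spike position in each of the $l-1$ active columns --- so its log-cardinality is at most of order $(l-1)\log n$ (and you in fact state $\log|\Theta|\asymp l-1$). With separation $\asymp (l-1)h^2$ on a constant fraction of the active columns, the KL constraint in Lemma~\ref{lem:lower-general} forces $h^2\lesssim\sigma^2$, and the resulting rate is at most $\tfrac{(l-1)\sigma^2}{nm}$. This misses the target $\tfrac{\sigma^2\log l}{m}$ by a factor of order $n\log l/(l-1)$, which is enormous for small $l$. The defect is structural: varying only the permutation of a \emph{fixed} $A$ with $l-1$ spike columns parametrizes matrices by an injective map $[l-1]\to[n]$, not by $n$ independent choices. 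You need $n$, not $l-1$, independent bits in the packing.

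The paper's construction (Lemmas~\ref{lem:zero-one-packing} and~\ref{lem:lower-approx}) swaps the roles of rows and columns: each of the $n$ \emph{rows} independently places a single $1$ in one of the first $l$ columns, yielding $l^n$ configurations and a packing $\mathcal P$ with $\log|\mathcal P|\gtrsim n\log l$. After rescaling the height to $h=\min\big(\tfrac{\sigma}{8}\sqrt{\log l /2},\,(m/l)^{3/2}V_0\big)$, the pairwise separation is $\gtrsim n h^2$ and the KL upper bound $\lesssim \sigma^2 n\log l$ matches $\log|\mathcal P|$, giving the rate $\tfrac{1}{m}\min(\sigma^2\log l,\,m^3 l^{-3}V_0^2)$. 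Each such matrix lies in $\mathcal M$ because the rows can be permuted so that every active column has a block of consecutive ones; the $l^{-3}$ exponent comes directly from $V(M)=\big(\tfrac{1}{m}\sum_{j=1}^l h^{2/3}\big)^{3/2}=h(l/m)^{3/2}\le V_0$, not from the mechanism you sketch. This is also why the construction fails for $\mathcal S^m$ when $l\ge 3$: no single permutation makes three or more such one-hot columns simultaneously monotone. For $l=2$ the paper (Lemma~\ref{lem:lower-monotone}) again uses $n$ degrees of freedom --- a Varshamov--Gilbert code in $\{0,1\}^n$ placed in a single column --- not ``two well-separated permutations'' as you write.
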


Note that the bound clearly holds for the larger parameter space $\mathcal M_{K_0} = \bigcup_{\Pi \in \mathfrak S_n} \Pi \mathcal U_{K_0}^m$. By taking $l = \min(K_0-m, m) + 1$ and $V_0$ large enough, we see that the assumption in Proposition~\ref{prop:lower-stronger} is satisfied and the second term becomes simply $\frac{\sigma^2}m \log l$, so Theorem~\ref{thm:lower-adaptive} follows.
In the monotonic case, by the last statement of the proposition, if $K_0 \ge m+1$ then taking $l=2$ and $V_0$ large enough yields a lower bound of rate $\sigma^2 (\frac{K_0}{nm}
+ \frac {1}m)$ for the set of matrices $A$ with increasing columns and $K(A) \le K_0$.

The proof of Proposition~\ref{prop:lower-stronger} has two parts which correspond to the two terms respectively. First, the term $\sigma^2 \frac{K_0}{nm}$ is derived from the proof of lower bounds for isotonic regression in \cite{BelTsy15}.
Then we derive the other term $\frac{\sigma^2}{m} \log l$ for any $1 \le l \le \min(K_0 - m, m)+1$, which is due to the unknown permutation.

\begin{lemma} \label{lem:lower-est}
Suppose that $K_0 \le m (\frac{16n}{\sigma^2})^{1/3} V_0^{2/3} - m$. For some $c, c'>0$,
\[ \inf_{\hat M} \sup_{M \in \mathcal M^{\mathcal S}_{K_0}(V_0)} {\p}_M\big[ \|\hat M - M\|_F^2 \ge c \sigma^2 K_0 \big] \ge c\,, \]
where ${\p}_M$ is the probability with respect to $Y= M+ Z$.
\end{lemma}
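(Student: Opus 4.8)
\subsection*{Proof plan for Lemma~\ref{lem:lower-est}}

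The plan is to reduce to a finite testing problem via Lemma~\ref{lem:lower-general}, using a packing of monotone ``block'' matrices produced by the Varshamov--Gilbert bound, in direct analogy with the adaptive lower bound for isotonic regression in \cite{BelTsy15}. Throughout I would take $\Pi = I_n$, so that every matrix in the packing lies in $\mathcal M^{\mathcal S}_{K_0}(V_0)$ and it suffices to exhibit the lower bound over $\mathcal S^m\cap\{A:K(A)\le K_0,\ V(A)\le V_0\}$. We may also assume $K_0$ exceeds a suitable absolute constant, the complementary regime being immediate from a two-point (Le Cam) argument; here we use that the hypothesis $K_0\le m(16n/\sigma^2)^{1/3}V_0^{2/3}-m$ together with $K_0\ge m$ forces $V_0\gtrsim \sigma/\sqrt n$.

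First I would set $k=\lfloor K_0/m\rfloor\ge 1$ (note $k\le n$ since $K_0\le nm$), partition $[n]$ into $k$ consecutive blocks $I_1,\dots,I_k$ with $|I_i|\ge n/(2k)$, and, for a parameter $\varepsilon>0$ and a binary array $\omega\in\{0,1\}^{k\times m}$, define $A_\omega\in\R^{n\times m}$ by $(A_\omega)_{r,j}=2(i-1)\varepsilon+\omega_{i,j}\varepsilon$ for $r\in I_i$, $j\in[m]$. Each column of $A_\omega$ is then a nondecreasing step function taking exactly $k$ distinct values (the value ranges of consecutive blocks are separated by a gap of width $\varepsilon$), so $A_\omega\in\mathcal S^m$ with $K(A_\omega)=km\le K_0$ and $V(A_\omega)\le 2k\varepsilon$. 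A direct computation gives $\|A_\omega-A_{\omega'}\|_F^2=\sum_{i,j}|I_i|\varepsilon^2\mathbf 1(\omega_{i,j}\neq\omega'_{i,j})$, which lies between $\tfrac{n}{2k}\varepsilon^2\delta(\omega,\omega')$ and $\tfrac{2n}{k}\varepsilon^2\delta(\omega,\omega')$ with $\delta$ the Hamming distance on $\{0,1\}^{km}$.

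Next I would apply Lemma~\ref{lem:vg} to $\{0,1\}^{km}$ to get $\Omega$ with $\log|\Omega|\ge km/8$ and pairwise Hamming distance at least $km/4$; then for distinct $\omega,\omega'\in\Omega$ one has $\tfrac{nm}{8}\varepsilon^2\le\|A_\omega-A_{\omega'}\|_F^2\le 2nm\varepsilon^2$. Choosing $\varepsilon^2=\tfrac{\sigma^2 k}{128\,n}$ makes the right-hand side at most $\tfrac{\sigma^2 km}{64}\le\tfrac{\sigma^2}{8}\log|\Omega|$ and the left-hand side at least $4\phi$ with $\phi:=\tfrac{nm\varepsilon^2}{32}=\tfrac{\sigma^2 km}{4096}$, so Lemma~\ref{lem:lower-general} applied with $\Theta=\{A_\omega:\omega\in\Omega\}$ (which has at least $3$ elements in the assumed regime) yields $\inf_{\hat M}\sup_M{\p}_M[\|\hat M-M\|_F^2\ge\phi]\ge c$. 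Finally, since $k=\lfloor K_0/m\rfloor$ one checks $km\ge K_0/2$ in every case ($K_0\ge 2m$ gives $km>K_0/2$; $m\le K_0<2m$ gives $k=1$, $km=m>K_0/2$), hence $\phi\gtrsim\sigma^2 K_0$, which is the claim. One must also verify the variation constraint $2k\varepsilon\le V_0$: with $\varepsilon^2=\tfrac{\sigma^2 k}{128 n}$ this is $k^3\lesssim nV_0^2/\sigma^2$, which holds because $k\le K_0/m\le(16n/\sigma^2)^{1/3}V_0^{2/3}$.

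The main obstacle is precisely the bookkeeping of three competing requirements on the perturbation scale $\varepsilon$: the Varshamov--Gilbert lower distance must exceed $4\phi$ (so $\varepsilon$ cannot be too small relative to the target rate), the distances must stay below $\tfrac{\sigma^2}{8}\log|\Omega|$ for the Fano-type bound to bite (forcing $\varepsilon\lesssim\sigma\sqrt{k/n}$), and the variation bound $2k\varepsilon\le V_0$ must hold. The first two force $\varepsilon\asymp\sigma\sqrt{k/n}$, and then the third reads $k\lesssim(nV_0^2/\sigma^2)^{1/3}$, i.e. $K_0\lesssim m(n/\sigma^2)^{1/3}V_0^{2/3}$ --- exactly (up to the harmless $-m$ and absolute constants) the hypothesis of the lemma, which is what makes the admissible window for $\varepsilon$ nonempty. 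Everything else is routine; the only minor care needed is the integer rounding in $k$ and in the block sizes, and ensuring $|\Omega|\ge 3$.
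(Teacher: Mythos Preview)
Your proposal is correct and follows essentially the same approach as the paper's proof: both construct a Varshamov--Gilbert packing of monotone block matrices with perturbation scale $\varepsilon\asymp\sigma\sqrt{k/n}$ and invoke Lemma~\ref{lem:lower-general}. The only cosmetic difference is that the paper allows column-dependent block counts $k_j$ summing exactly to $K_0$ (with base staircase $\lfloor(i-1)k_j/n\rfloor V_0/(2k_j)$ and perturbation $\gamma_j=\tfrac{\sigma}{8}\sqrt{k_j/2n}$), whereas you take a single $k=\lfloor K_0/m\rfloor$ and recover the right order via $km\ge K_0/2$; this changes nothing of substance.
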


\begin{proof}
We adapt the proof of \cite[Theorem~4]{BelTsy15} to the case of matrices. Let $V_j = V_0$ for all $j \in [m]$. Since 
\[ K_0 \le m \big(\frac{16n}{\sigma^2}\big)^{1/3} V_0^{2/3} - m 
= \sum_{j=1}^m \Big[\big(\frac{16n}{\sigma^2}\big)^{1/3}V_j^{2/3} - 1 \Big],
\]
we can choose $k_j \in [n]$ so that $k_j \le (\frac{16n}{\sigma^2})^{1/3}V_j^{2/3}$ and $K_0 = \sum_{j=1}^m k_j$. 
According to Lemma~\ref{lem:vg}, there exists $\Omega \subset \{0,1\}^{K_0}$ such that $\log |\Omega| \ge K_0/8$ and $\delta(\omega, \omega') \ge K_0/4$ for distinct $\omega, \omega' \in \Omega$. Consider the partition $[K_0] = \cup_{m=1}^j I_j$ with $|I_j| = k_j$. For each $\omega \in \Omega$, let $\omega^j \in \{0,1\}^{k_j}$ be the restriction of $\omega$ to coordinates in $I_j$. Define $M^\omega \in {\R}^{n \times m}$ by
\[ M_{i,j}^{\omega} = \frac{\lfloor (i-1) k_j/ n \rfloor V_j}{2 k_j} + \gamma_j \omega_{\lfloor (i-1) k_j/ n \rfloor + 1}, \]
where $\gamma_j = \frac \sigma 8 \sqrt{k_j / 2n}$. It is straightforward to check that $k(M_{\cdot,j}) \le k_j$, $V(M_{\cdot,j}) \le V_j$ and $M_{\cdot, j}$ is increasing, so 
$M$ is in the parameter space.
Moreover, for distinct $\omega, \omega' \in \Omega$,
\[ \|M^\omega - M^{\omega'}\|_F^2 \ge c \sum_{j=1}^m \frac{n}{k_j} \gamma_j^2 \delta(\omega^j, (\omega')^j) \ge c \sigma^2 \sum_{j=1}^m \delta(\omega^j, (\omega')^j) = c\sigma^2 K_0. \]
On the other hand,
\[ \|M^\omega - M^{\omega'}\|_F^2 \le 2 \sum_{j=1}^m \frac{n}{k_j} \gamma_j^2 \delta(\omega^j, (\omega')^j) \le \frac{\sigma^2}{64} \delta(\omega, \omega') \le \frac{\sigma^2 K_0}{64} \le \frac{\sigma^2}{8} \log |\Omega|. \]
Applying Lemma~\ref{lem:lower-general} completes the proof.
\end{proof}

For the second term in \eqref{eq:lower-stronger}, we first note that the bound is trivial for $l =1$ since $\log l = 0$. The next lemma deals with the case $l=2$.

\begin{lemma} \label{lem:lower-monotone}
There exist constants $c, c'>0$ such that for any $K_0 \ge m+1$ and $V_0 \ge 0$, 
\[
\inf_{\hat M} \sup_{M \in \mathcal M^{\mathcal S}_{K_0}(V_0)} {\p}_M\Big[ \|\hat M - M\|_F^2 \ge c n \min\big(\sigma^2,  m^3 V_0^2\big) \Big] \ge c' \,,
\]
where ${\p}_M$ is the probability with respect to $Y= M+ Z$.
\end{lemma}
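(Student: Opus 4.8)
The plan is to build, for a suitable $\gamma>0$, a family $\{M^\omega\}_{\omega\in\Omega}$ of matrices in $\mathcal M^{\mathcal S}_{K_0}(V_0)$ indexed by a combinatorial code $\Omega\subset\{0,1\}^n$, so that these matrices are pairwise well separated in Frobenius norm but have pairwise Kullback--Leibler divergence small enough to apply Lemma~\ref{lem:lower-general} (as in the proof of Proposition~\ref{prop:lower-stronger}, it suffices to lower bound $\|\hat M-M\|_F^2$ for an arbitrary estimator $\hat M$, since any pair $(\hat\Pi,\hat A)$ yields $\hat M=\hat\Pi\hat A$). The design principle is that, because $V(\cdot)$ is an $\ell_{2/3}$-type average over the columns, the whole variation budget $V_0$ can be concentrated into a single column, which can then carry single-column variation as large as $m^{3/2}V_0$; this is precisely where the factor $m^3$ in the stated bound comes from.

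Concretely, set $\gamma=c_0\min(\sigma,\,m^{3/2}V_0)$ for a small absolute constant $c_0\in(0,1/8]$ (the statement is trivial when $\gamma=0$), and for $\omega\in\{0,1\}^n$ define $M^\omega\in{\R}^{n\times m}$ by $M^\omega_{i,1}=\gamma\omega_i$ and $M^\omega_{i,j}=0$ for $j\ge2$. Sorting the first column so that its zero entries precede its $\gamma$ entries exhibits $M^\omega$ as $\Pi^\omega B^\omega$ with $\Pi^\omega\in\mathfrak S_n$ and $B^\omega\in\mathcal S^m$. Since $k(B^\omega_{\cdot,1})\le2$ while the other columns are constant, $K(M^\omega)\le m+1\le K_0$; and $V(B^\omega_{\cdot,1})\le\gamma$ with every other column of variation $0$ gives $V(M^\omega)\le(\gamma^{2/3}/m)^{3/2}=\gamma/m^{3/2}\le V_0$. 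Hence $M^\omega\in\mathcal M^{\mathcal S}_{K_0}(V_0)$ for every $\omega$, with no parity constraint on $\omega$.

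Next I would invoke the Varshamov--Gilbert bound (Lemma~\ref{lem:vg}) with $d=n$ to obtain $\Omega\subset\{0,1\}^n$ with $\log|\Omega|\ge n/8$ and Hamming distance $\delta(\omega,\omega')\ge n/4$ for distinct $\omega,\omega'$. As only the first column of $M^\omega$ depends on $\omega$, $\|M^\omega-M^{\omega'}\|_F^2=\gamma^2\delta(\omega,\omega')\in[\gamma^2 n/4,\ \gamma^2 n]$, and the choice $c_0\le1/8$ makes $\gamma^2 n\le\sigma^2 n/64\le\tfrac{\sigma^2}{8}\log|\Omega|$. Thus $\Theta=\{M^\omega:\omega\in\Omega\}$ meets the hypotheses of Lemma~\ref{lem:lower-general} with $\phi=\gamma^2 n/16\asymp n\min(\sigma^2,m^3V_0^2)$, which yields the claimed lower bound (and, taking $l=2$, the corresponding case of Proposition~\ref{prop:lower-stronger}).

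The only step requiring genuine thought is the choice of the family: in contrast to the cardinality lower bound of Lemma~\ref{lem:lower-est}, the variation constraint should here be saturated by a \emph{single} two-valued column whose ``high'' rows the permutation is free to reshuffle, the remaining columns contributing nothing to either the signal or the budget. Everything afterwards is the Varshamov--Gilbert plus Fano reduction already used above; the remaining care is routine bookkeeping — the trivial case $V_0=0$, and ensuring $|\Theta|\ge3$, which holds once $n$ exceeds a small absolute constant and can otherwise be absorbed into the constants or handled by a two-point Le Cam argument.
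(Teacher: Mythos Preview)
Your proposal is correct and follows essentially the same route as the paper: both constructions place a two-valued first column $\alpha\omega$ (with $\alpha\asymp\min(\sigma,m^{3/2}V_0)$) and zeros elsewhere, verify membership in $\mathcal M^{\mathcal S}_{K_0}(V_0)$ via $K\le m+1$ and the $\ell_{2/3}$ variation bound, then apply Varshamov--Gilbert and Lemma~\ref{lem:lower-general}. Your explicit handling of the edge cases ($V_0=0$, $|\Theta|\ge3$) is a nice addition the paper omits.
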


\begin{proof}
By Lemma~\ref{lem:vg}, there exists $\Omega \subset \{0, 1\}^n$ such that $\log |\Omega| \ge n/8$ and $\delta(\omega, \omega') \ge n/4$ for distinct $\omega, \omega' \in \Omega$. For each $\omega \in \Omega$, define $M^\omega \in {\R}^{n\times m}$ by setting the first column of $M^\omega$ to be $\alpha \omega$ and all other entries to be zero, where
$\alpha = \min\big(\frac \sigma 8,  m^{3/2} V_0\big).$
Then 
\begin{enumerate}
\item $M^\omega \in \mathcal M^{\mathcal S}_{K_0}(V_0)$ since $K(M) = m+1 \le K_0$, $V(M) \le V_0$ and we can permutate the rows of $M^\omega$ so that its first column is increasing;

\item $\|M^\omega - M^{\omega'}\|_F^2 \ge \min(\frac{\sigma^2}{64}, m^3 V_0^2)\, \delta(\omega, \omega') \ge \min(\frac{n \sigma^2}{256},  \frac n4 m^3 V_0^2)$ for distinct $\omega, \omega' \in \Omega$;

\item $\|M^\omega - M^{\omega'}\|_F^2 \le \frac{\sigma^2}{64} \delta(\omega, \omega') \le \frac{\sigma^2}{64} n \le \frac{\sigma^2} 8 \log |\Omega|$ for $\omega, \omega' \in \Omega$. 
\end{enumerate}
Applying Lemma~\ref{lem:lower-general} completes the proof.
\end{proof}


For the previous two lemmas, we have only used matrices with increasing columns. However, to achieve the second term in \eqref{eq:lower-stronger} for $l \ge 3$, we need matrices with unimodal columns. 
The following packing lemma is the key.

\begin{lemma} \label{lem:zero-one-packing}
For $l \in [m]$, consider the set $\mathfrak M$ of $n \times m$ matrices of the form
\[ M = 
\begin{cases}
1 & \text{for exactly one } j_i \in [l] \text{ for each } i \in [n], \\
0 & \text{otherwise} .
\end{cases}
\]
For $\varepsilon> 0$, define $k = \lfloor \frac{\varepsilon^2 n}2 \rfloor$.
Then there exists an $\varepsilon \sqrt n$-packing $\mathcal P$ of $\mathfrak M$ such that $|\mathcal P| \ge l^{n-k} (\frac k{en})^k$ if $k \ge 1$ and $|\mathcal P| = l^n$ if $k = 0$.
\end{lemma}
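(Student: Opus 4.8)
The plan is to identify $\mathfrak M$ with the Hamming cube $[l]^n$ and then run a volumetric Gilbert--Varshamov argument. Each $M \in \mathfrak M$ is determined by the vector $j = (j_1,\dots,j_n) \in [l]^n$ recording, for each row $i$, the column $j_i \in [l]$ carrying the $1$ (columns $l+1,\dots,m$ are identically zero and irrelevant), so $|\mathfrak M| = l^n$. A one-line computation shows that for $M, M'$ associated with $j, j'$ one has $\|M - M'\|_F^2 = 2\,\delta_H(j, j')$, where $\delta_H$ is the Hamming distance on $[l]^n$: rows on which $j$ and $j'$ agree contribute nothing, and each of the remaining rows contributes $2$. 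Hence an $\varepsilon\sqrt n$-packing of $\mathfrak M$ is precisely a code in $[l]^n$ of minimum Hamming distance at least $\varepsilon^2 n/2$; since $\delta_H$ is integer-valued and $k = \lfloor \varepsilon^2 n/2 \rfloor$, it suffices to produce a code whose codewords are pairwise at Hamming distance $> k$. The case $k = 0$ is immediate: then $\varepsilon\sqrt n < \sqrt 2$, which is strictly below the minimum distance $\sqrt 2$ between distinct elements of $\mathfrak M$, so $\mathcal P = \mathfrak M$ already works and $|\mathcal P| = l^n$. We may also assume $k \le n$, since for $\varepsilon > \sqrt 2$ the Frobenius diameter $\sqrt{2n}$ of $\mathfrak M$ is exceeded and the statement is degenerate.

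For $1 \le k \le n$ I would take $\mathcal P \subset \mathfrak M$ to be a \emph{maximal} $\varepsilon\sqrt n$-packing, which exists since $\mathfrak M$ is finite. By maximality, every $M \in \mathfrak M$ lies within (open) Frobenius distance $\varepsilon\sqrt n$ of some element of $\mathcal P$; translating through $\|\cdot\|_F^2 = 2\delta_H$ and using that $\delta_H$ is a non-negative integer strictly less than $\varepsilon^2 n/2$, hence at most $k$, this says that the Hamming balls $B_H(j,k)$ of radius $k$ centred at the codewords of $\mathcal P$ cover all of $[l]^n$. Therefore $|\mathcal P| \cdot \max_{j} |B_H(j,k)| \ge l^n$.

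It remains to bound the Hamming ball volume $|B_H(j,k)| = \sum_{i=0}^k \binom ni (l-1)^i$. Expanding $l^k = (1+(l-1))^k$ and using the identity $\binom nk \binom ki = \binom ni \binom{n-i}{k-i} \ge \binom ni$, one gets $\binom nk l^k = \sum_{i=0}^k \binom nk \binom ki (l-1)^i = \sum_{i=0}^k \binom ni \binom{n-i}{k-i}(l-1)^i \ge \sum_{i=0}^k \binom ni (l-1)^i = |B_H(j,k)|$. Combining this with the standard estimate $\binom nk \le (en/k)^k$ gives $|B_H(j,k)| \le (en/k)^k l^k$, whence $|\mathcal P| \ge l^n / \big((en/k)^k l^k\big) = l^{n-k}(k/(en))^k$, which is the claimed bound.

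No step here is genuinely difficult; the content is the reduction to a minimum-distance code problem together with a maximal-packing (rather than greedy) version of Gilbert--Varshamov, which is exactly what produces the precise constant $(k/(en))^k$. The only points needing mild care are the integer-rounding bookkeeping relating the Frobenius packing radius $\varepsilon\sqrt n$ to the Hamming ball radius $k$, and the combinatorial inequality $\sum_{i\le k}\binom ni(l-1)^i \le \binom nk l^k$, which is what lets us avoid the spurious factor of $k+1$ one would incur by crudely bounding the sum by $(k+1)$ times its largest term. Everything else is routine, provided one remembers to dispose of the degenerate regimes $k = 0$ and $k > n$ separately.
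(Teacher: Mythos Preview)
Your proof is correct and follows essentially the same argument as the paper: identify $\mathfrak M$ with $[l]^n$, take a maximal $\varepsilon\sqrt n$-packing, use that it is also a net so the Hamming balls of radius $k$ cover, and bound the ball volume by $\binom{n}{k}l^k \le (en/k)^k l^k$. Your write-up is a bit more explicit in naming the Gilbert--Varshamov connection and in deriving $\sum_{i\le k}\binom ni(l-1)^i \le \binom nk l^k$ via the identity $\binom nk\binom ki = \binom ni\binom{n-i}{k-i}$, whereas the paper simply observes directly that the number of matrices differing from a fixed $M_0$ in at most $k$ rows is at most $\binom nk l^k$.
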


\begin{proof}
There are $l$ choices of entries to put the one in each row of $M$, so $|\mathfrak M| = l^n$. Fix $M_0 \in \mathfrak M$. 
If $\|M - M_0\|_F \le \varepsilon \sqrt n$ where $M \in \mathfrak M$, then $M$ differs from $M_0$ in at most $k$ rows. If $k = 0$, taking $\mathcal P = \mathfrak M$ gives the result. If $k \ge 1$ then 
\[ \big|\mathfrak M \cap B^{nm}(M_0, \varepsilon \sqrt n)\big| \le \binom{n}{k} l^k \le \big(\frac{en}{k}\big)^k l^k \,. \]
Moreover, let $\mathcal P$ be a maximal $\varepsilon\sqrt n$-packing of $\mathfrak M$. Then $\mathcal P$ is also an $\varepsilon \sqrt n$-net, so 
$\mathfrak M \subset \bigcup_{M_0 \in \mathcal P} B^{nm} (M_0, \varepsilon \sqrt n).$
It follows that
\[ l^n = |\mathfrak M| \le \sum_{M_0 \in \mathcal P} \big|\mathfrak M \cap B^{nm} (M_0, \varepsilon \sqrt n)\big| \le |\mathcal P| \cdot \big(\frac{en}{k}\big)^k l^k \,. \]
We conclude that $|\mathcal P| \ge l^{n-k} (\frac k{en})^k$.
\end{proof}

For notational simplicity, we now consider $2 \le l  \le \min(K_0 - m, m)$ instead of $3 \le l  \le \min(K_0 - m, m)+1$.

\begin{lemma} \label{lem:lower-approx}
There exist constants $c, c'>0$ such that for any $K_0 \ge m$, $V_0 \ge 0$ and $2 \le l  \le \min(K_0 - m, m)$, 
\[
\inf_{\hat M} \sup_{M \in \mathcal M_{K_0}(V_0)} {\p}_M\Big[ \|\hat M - M\|_F^2 \ge c n \min\big(\sigma^2 \log (l+1),  m^3 (l+1)^{-3} V_0^2\big) \Big] \ge c' \,,
\]
where ${\p}_M$ is  the probability with respect to $Y= M+ Z$.
\end{lemma}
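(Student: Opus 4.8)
The plan is to produce, inside $\mathcal M_{K_0}(V_0)$, a large $\|\cdot\|_F$-separated family of matrices of small diameter, and then conclude with Lemma~\ref{lem:lower-general}. If $V_0 = 0$ the right-hand side of the claimed event is $0$, so the statement is trivial; assume henceforth $V_0 > 0$.

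\emph{Construction.} Apply Lemma~\ref{lem:zero-one-packing} with the given $l$ (note $l \le \min(K_0-m,m) \le m$, so $l \in [m]$) and packing radius $\varepsilon\sqrt n$, where $\varepsilon$ is a small absolute constant chosen so that with $k = \lfloor \varepsilon^2 n/2\rfloor$ the resulting packing $\mathcal P$ satisfies $\log|\mathcal P| \ge c_0 n \log l$; this is possible precisely because $l \ge 2$ (so $\log l$ is bounded below, which dominates the $\varepsilon^2 n$ loss term from the bound $|\mathcal P| \ge l^{n-k}(k/(en))^k$; for small $n$ one uses the $k=0$ alternative, where $|\mathcal P| = l^n \ge 3$). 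Each $M \in \mathcal P \subset \mathfrak M$ has exactly one $1$ per row among its first $l$ columns. Now scale: set $M^\omega = \alpha M$ for $M \in \mathcal P$, with
\[
\alpha^2 = \min\Big( c_1 \sigma^2 \log l,\ \big(\tfrac ml\big)^3 V_0^2 \Big),
\]
$c_1$ a small absolute constant to be fixed below. Then $M^\omega \in \mathcal M_{K_0}(V_0)$: reorder the rows so that rows whose $1$ lies in column $j$ are consecutive and grouped by increasing $j$; then column $j$ of the reordered matrix has the shape $(0,\dots,0,\alpha,\dots,\alpha,0,\dots,0)$ for $j \le l$ and is constant for $j > l$, hence every column is unimodal. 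Since $K$ and $V$ are invariant under row permutations, $K(M^\omega) \le 2l + (m-l) = m+l \le K_0$ (using $l \le K_0 - m$), and $V(M^\omega) \le \big(\tfrac 1m\, l\,\alpha^{2/3}\big)^{3/2} = (l/m)^{3/2}\alpha \le V_0$ by the choice of $\alpha$.

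\emph{Applying Lemma~\ref{lem:lower-general}.} On any given row two matrices in $\mathfrak M$ either agree or differ by a vector of squared Euclidean norm $2$, so after scaling each differing row contributes exactly $2\alpha^2$. Hence, for distinct $M, M' \in \mathcal P$, the $\varepsilon\sqrt n$-packing property gives $\|M^\omega - M'^\omega\|_F^2 = \alpha^2\|M-M'\|_F^2 \ge \alpha^2 \varepsilon^2 n$, while $\|M^\omega - M'^\omega\|_F^2 \le 2\alpha^2 n$ trivially (there are $n$ rows). Because $\log|\mathcal P| \ge c_0 n \log l$ and $\alpha^2 \le c_1 \sigma^2 \log l$, taking $c_1 \le c_0/16$ yields $2\alpha^2 n \le \tfrac{\sigma^2}{8}\log|\mathcal P|$. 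Thus with $\phi := \alpha^2 \varepsilon^2 n/4$ the family $\{M^\omega\}_{M\in\mathcal P}$ meets all hypotheses of Lemma~\ref{lem:lower-general} ($|\mathcal P|\ge 3$, and $4\phi \le \|M^\omega - M'^\omega\|_F^2 \le \tfrac{\sigma^2}{8}\log|\mathcal P|$ for all distinct pairs), so
\[
\inf_{\hat M} \sup_{M \in \mathcal M_{K_0}(V_0)} {\p}_M\big[\|\hat M - M\|_F^2 \ge \phi\big] \ge c'.
\]
Finally $\phi = \tfrac{\varepsilon^2}{4} n\alpha^2 = \tfrac{\varepsilon^2}{4} n \min(c_1\sigma^2\log l,\, m^3 l^{-3} V_0^2) \gtrsim n\min(\sigma^2 \log(l+1),\, m^3(l+1)^{-3}V_0^2)$, using $\log l \ge (\log 2/\log 3)\log(l+1)$ and $l^{-3} \ge (l+1)^{-3}$ for $l \ge 2$. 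This is the asserted bound.

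\emph{Main obstacle.} The one genuinely quantitative point is the simultaneous calibration of $\varepsilon$ and $\alpha$: the packing radius $\varepsilon$ must remain a fixed positive constant (not shrinking with $n$) for Lemma~\ref{lem:zero-one-packing} to still deliver $\log|\mathcal P| \gtrsim n\log l$, while $\alpha^2$ must be small enough that the diameter bound $2\alpha^2 n \le \tfrac{\sigma^2}{8}\log|\mathcal P|$ needed for the Kullback--Leibler control in Lemma~\ref{lem:lower-general} holds — yet large enough that the induced separation $\phi=\alpha^2\varepsilon^2 n/4$ still matches the target rate; and all of this has to coexist with the combinatorial constraints $l\le K_0-m$ and the factor $(l/m)^{3/2}$, which are exactly what keep $K(M^\omega)\le K_0$ and $V(M^\omega)\le V_0$. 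Everything else is routine bookkeeping.
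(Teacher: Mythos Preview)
Your proof is correct and follows essentially the same approach as the paper: invoke Lemma~\ref{lem:zero-one-packing} with a fixed small $\varepsilon$ to obtain a packing with $\log|\mathcal P|\gtrsim n\log l$, rescale by $\alpha\asymp\min(\sigma\sqrt{\log l},(m/l)^{3/2}V_0)$, verify unimodality, the $K$-bound, and the $V$-bound exactly as you do, and conclude via Lemma~\ref{lem:lower-general}. The paper simply fixes $\varepsilon=1/2$ and writes out the resulting constants explicitly, but the structure is identical.
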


\begin{proof}
Set $\varepsilon = 1/2$ and let $\mathcal P$ be the $\sqrt n /2$-packing  given by Lemma~\ref{lem:zero-one-packing}. If $k = \lfloor \frac n8 \rfloor = 0$, then $\log |\mathcal P| = n \log l$. Now assume that $k \ge 1$. Since $(\frac{x}{en})^x$ is decreasing on $[1, n]$, we have that
$|\mathcal P| \ge l^{7n/8} (\frac 1{8e})^{n/8}$. Hence for $l \ge 2$,
\begin{equation} \label{eq:log-p} 
\log |\mathcal P| \ge \frac{7n}8 \log l - \frac n8 \log(8e) \ge \frac n4 \log l \, . 
\end{equation}

Moreover, for each $M_0 \in \mathcal P$, consider the rescaled matrix 
\[ M = \min\Big(\frac{\sigma}{8}\sqrt{\frac{\log l}2}, \big(\frac ml\big)^{3/2} V_0\Big) M_0 \,.\]
\begin{enumerate}
\item We can permute the rows of $M_0$ so that each column has consecutive ones (or all zeros), so $M \in \mathcal M$. Moreover, 
\[ K(M) = 2l+m-l \le \min(m, K_0 - m) + m \le K_0 \]
and 
\[ V(M) \le \Big( \frac 1m \sum_{j = 1}^l \big( (m/l)^{3/2} V_0 \big)^{2/3} \Big)^{3/2}  = V_0 \,, \]
so
$M \in \mathcal M_{K_0}(V_0)$ for $M_0 \in \mathcal P$.

\item For $M_0, M_0' \in \mathcal P$, $\|M_0 - M_0'\|_F^2 \ge n/4$, so
\begin{align*}
\|M - M'\|_F^2 &= \min\Big(\frac{\sigma^2 \log l}{128}, (m/l)^3 V_0^2\Big) \|M_0 - M_0'\|_F^2 \\ 
&\ge \min \Big( \frac{\sigma^2}{512} n \log l, \frac n4 \big(\frac ml \big)^3 V_0^2 \Big) \,. 
\end{align*}

\item For $M_0, M_0' \in \mathcal P$, $\|M_0 - M_0'\|_F^2 \le 2\|M_0\|_F^2 + 2\|M_0'\|_F^2 \le 4n$, so by \eqref{eq:log-p},
\[\|M - M'\|_F^2 \le \frac{\sigma^2 \log l}{128}\|M_0 - M_0'\|_F^2
\le \frac{\sigma^2}{32} n \log l \le \frac{\sigma^2} 8 \log |\mathcal P| \,. \]
\end{enumerate}
Since $\log l \ge \frac 12\log(l+1)$ for $l \ge 2$, applying Lemma~\ref{lem:lower-general} completes the proof.
\end{proof}

Combining Lemma~\ref{lem:lower-est}, \ref{lem:lower-monotone} and \ref{lem:lower-approx}, and dividing the bound by $nm$, 
we get \eqref{eq:lower-stronger}
because the max of two terms is lower bounded by a half of their sum. The last statement in Proposition~\ref{prop:lower-stronger} holds since Lemma~\ref{lem:lower-est} and \ref{lem:lower-monotone} are proved for matrices with increasing columns.

\subsection{Proof of Theorem~\ref{thm:lower-global}}

The proof will only use Lemma~\ref{lem:lower-est} and \ref{lem:lower-monotone}, so the lower bound of rate $(\frac{\sigma^2 V_0}{n})^{2/3} +  \frac {\sigma^2}n
+ \min (\frac {\sigma^2}m,  m^2 V_0^2)$ 
holds even if the matrices are required to have increasing columns.

The last term $\min (\frac {\sigma^2}m,  m^2 V_0^2)$ is achieved by Lemma~\ref{lem:lower-monotone}, so we focus on the trade-off between the first two terms.
Suppose that $(\frac{16n}{\sigma^2})^{1/3} V_0^{2/3} \ge 3$, in which case the first term $(\frac{\sigma^2 V_0}{n})^{2/3}$ dominates the second term. Then $m (\frac{16n}{\sigma^2})^{1/3} V_0^{2/3} - m \ge 2m$. Setting
\[ K_0 = \big\lfloor m \big(\frac{16n}{\sigma^2}\big)^{1/3} V_0^{2/3} - m \big\rfloor  \,,\]
we see that 
$ K_0 \ge \big\lfloor \frac m 2 (\frac{16n}{\sigma^2})^{1/3} V_0^{2/3}\big\rfloor$. Lemma~\ref{lem:lower-est} can be applied with this choice of $K_0$. Then the term $c \sigma^2 \frac{K_0}{nm}$ is lower bounded by $c(\frac{\sigma^2 V_0}{n})^{2/3}$.

On the other hand, if $(\frac{16n}{\sigma^2})^{1/3} V_0^{2/3} \le 3$, then the second term $\frac{\sigma^2}{n}$ dominates the first up to a constant. To deduce a lower bound of this rate, we apply Lemma~\ref{lem:vg} to get $\Omega \subset \{0, 1\}^m$ such that $\log |\Omega| \ge m/8$ and $\delta(\omega, \omega') \ge m/4$ for distinct $\omega, \omega' \in \Omega$. For each $\omega \in \Omega$, define $M^\omega \in {\R}^{n\times m}$ by setting every row of $M^\omega$ equal to $\frac{\sigma}{8\sqrt n} \omega^\top$. Then 
\begin{enumerate}
\item $M^\omega \in \mathcal U^m(V_0)$ since $V(M^\omega) = 0$;
\item $\|M^\omega - M^{\omega'}\|_F^2 = \frac{\sigma^2}{64} \delta(\omega, \omega') \ge c \sigma^2 m$;
\item $\|M^\omega - M^{\omega'}\|_F^2 = \frac{\sigma^2}{64} \delta(\omega, \omega') \le \frac{\sigma^2}{64} m \le \frac{\sigma^2}8 \log |\Omega|$.
\end{enumerate}
Hence Lemma~\ref{lem:lower-general} implies a lower bound on $\frac{1}{nm} \|\hat M - M\|_F^2$ of rate $\frac{\sigma^2 m}{nm} = \frac{\sigma^2}{n}$.

\section{Matrices with increasing columns} \label{sec:monotone}

For the model $Y = \Pi^* A^* + Z$ where $A^* \in \mathcal S^m$ and $Z \sim \operatorname{subG}(\sigma^2)$, a computationally efficient estimator $(\tilde \Pi, \tilde A)$ has been constructed in Section~\ref{sec:rankscore} using the \RankScore\ procedure. We will bound its rate of estimation in this section. Recall that the definition of $(\tilde \Pi, \tilde A)$ consists of two steps. First, we recover an order (or a ranking) of the rows of $Y$, which leads to an estimator $\tilde \Pi$ of the permutation. Then define $\tilde A \in \mathcal S^m$ so that $\tilde \Pi \tilde A$ is the projection of $Y$ onto the convex cone $\tilde \Pi \mathcal S^m$. For the analysis of the algorithm, we  deal with the projection step first, and then turn to learning the permutation.

\subsection{Projection}

In fact, for \emph{any} estimator $\tilde \Pi$, if $\tilde A$ is defined as above by the projection corresponding to $\tilde \Pi$, then the error $\|\tilde \Pi \tilde A - \Pi^* A^*\|_F^2$ can be split
into two parts: the permutation error $\|(\tilde\Pi - \Pi^*) A^*\|_F^2$ and the estimation error of order $\tilde O(\sigma^2 K(A^*))$. 

The proof of the following oracle inequality is very similar to that of Theorem~\ref{thm:adaptive},
so we will sketch the proof without providing all the details.

\begin{lemma} \label{lem:approx-est}
Consider the model $Y = \Pi^* A^* + Z$ where $A^* \in \mathcal S^{m}$ and $Z\sim \operatorname{subG}(\sigma^2)$. For any $\tilde \Pi \in \mathfrak S_n$, define $\tilde A \in \mathcal S^m$ so that $\tilde \Pi \tilde A$ is the projection of $Y$ onto $\tilde \Pi \mathcal S^m$. 
Then with probability at least $1-e^{-c(n+m)}$,
\begin{multline*} 
\|\tilde \Pi \tilde A - \Pi^* A^*\|_F^2 \lesssim \min_{A \in \mathcal S^m}  \Big(\|A - A^*\|_F^2 + \sigma^2 K(A) \log \frac{enm}{K(A)} \Big) + \|(\tilde\Pi - \Pi^*) A^*\|_F^2.
\end{multline*}
\end{lemma}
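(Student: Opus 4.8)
The argument is a streamlined version of the proof of Theorem~\ref{thm:adaptive}. The simplification is that here the permutation $\tilde\Pi$ is \emph{fixed}, so the relevant parameter set is the single closed convex cone $\mathcal C:=\tilde\Pi\,\mathcal S^m$ rather than the union $\mathcal M$ over $n!n^m$ cones; consequently the $n\log n$ covering penalty of Lemma~\ref{lem:cover-matrix-union} is replaced by the smaller bound of Lemma~\ref{lem:cover-matrix-uni}. The only genuinely new feature is that the true matrix $\Pi^*A^*$ need not lie in $\mathcal C$, so Chatterjee's variational formula cannot be centered at the truth.

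Fix any oracle $A\in\mathcal S^m$ and set the center $a^*:=\tilde\Pi A\in\mathcal C$. Writing $Y-a^*=(\Pi^*A^*-\tilde\Pi A)+Z$ and recalling that $\tilde\Pi\tilde A$ is the projection of $Y$ onto $\mathcal C$, I would apply Lemma~\ref{lem:variational} with this $\mathcal C$, with $a^*=\tilde\Pi A$, $z=Y-a^*$, $y=Y$, $\hat a=\tilde\Pi\tilde A$, whose associated function is
\[
f_{\tilde\Pi A}(t)=\sup_{B\in\mathcal C\cap\mathcal B^{nm}(\tilde\Pi A,t)}\Big(\langle B-\tilde\Pi A,\ \Pi^*A^*-\tilde\Pi A\rangle+\langle B-\tilde\Pi A,\ Z\rangle\Big)-\frac{t^2}{2}.
\]
The deterministic part is at most $t\,\|\Pi^*A^*-\tilde\Pi A\|_F$ by Cauchy--Schwarz. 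For the stochastic part, since $\mathcal C=\tilde\Pi\mathcal S^m$ is a cone, the scaling argument of Lemma~\ref{lem:f-bound} reduces $\sup_{B\in\mathcal C\cap\mathcal B^{nm}(\tilde\Pi A,t)}\langle B-\tilde\Pi A,Z\rangle$ to $t\sup_{M\in\Theta_{\mathcal C}(\tilde\Pi A,1)}\langle M,Z\rangle$, with $\Theta$ as in \eqref{eq:def-theta}. Since $\tilde\Pi$ is an isometry for the Frobenius norm, $\Theta_{\tilde\Pi\mathcal S^m}(\tilde\Pi A,1)$ has the same metric entropy as $\Theta_{\mathcal S^m}(A,1)$, which by Lemma~\ref{lem:cover-matrix-uni} applied with $\mathbf{l}=(n,\dots,n)$ (so that $\mathcal C^m_{\mathbf{l}}=\mathcal S^m$) is at most $C\varepsilon^{-1}K(A)\log\frac{enm}{K(A)}$; feeding this into the chaining tail bound of Lemma~\ref{lem:chaining-tail} gives, with probability at least $1-C\exp(-cs^2/\sigma^2)$, simultaneously for all $t>0$,
\[
f_{\tilde\Pi A}(t)\le C\sigma\,t\sqrt{K(A)\log\tfrac{enm}{K(A)}}+st+t\,\|\Pi^*A^*-\tilde\Pi A\|_F-\frac{t^2}{2}.
\]

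The right-hand side is negative once $t$ exceeds $t^*:=C\big(\sigma\sqrt{K(A)\log\frac{enm}{K(A)}}+s+\|\Pi^*A^*-\tilde\Pi A\|_F\big)$, so Lemma~\ref{lem:variational} gives $\|\tilde\Pi\tilde A-\tilde\Pi A\|_F\le t^*$. Combining with the triangle inequality and $\|\tilde\Pi A-\Pi^*A^*\|_F\le\|A-A^*\|_F+\|(\tilde\Pi-\Pi^*)A^*\|_F$ yields
\[
\|\tilde\Pi\tilde A-\Pi^*A^*\|_F\lesssim\sigma\sqrt{K(A)\log\tfrac{enm}{K(A)}}+s+\|A-A^*\|_F+\|(\tilde\Pi-\Pi^*)A^*\|_F.
\]
Squaring, taking $s\asymp\sigma\sqrt{n+m}$ so that the event has probability at least $1-e^{-c(n+m)}$ (the resulting $\sigma^2(n+m)$ contribution is of lower order: $\sigma^2 m$ is absorbed since $K(A)\ge m$, and $\sigma^2 n$ is dominated by the permutation term in the applications of this lemma), and minimizing over $A\in\mathcal S^m$ gives the claim.

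The main obstacle --- really the only point requiring care, the rest being the bookkeeping of Theorem~\ref{thm:adaptive} --- is the choice of center in the variational formula: centering at $\Pi^*A^*$ is impossible as it lies outside $\mathcal C$, and centering at $\tilde\Pi\tilde A$ is circular, whereas centering at $\tilde\Pi A$ with $A$ the oracle is precisely what produces the clean split of the bias $\|\Pi^*A^*-\tilde\Pi A\|_F$ into the approximation error $\|A-A^*\|_F$ and the permutation error $\|(\tilde\Pi-\Pi^*)A^*\|_F$. A secondary point to verify is that the covering input is Lemma~\ref{lem:cover-matrix-uni} for a single cone, not Lemma~\ref{lem:cover-matrix-union} for the $n!n^m$-fold union, since here the permutation is frozen.
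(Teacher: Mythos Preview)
Your proposal is correct and follows essentially the same route as the paper: center Chatterjee's variational formula at $\tilde\Pi A$ for an oracle $A\in\mathcal S^m$, bound the stochastic term via chaining with the single-cone entropy bound of Lemma~\ref{lem:cover-matrix-uni} (with $\mathbf l=(n,\dots,n)$ so $\mathcal C^m_{\mathbf l}=\mathcal S^m$), bound the deterministic term by $t\|\tilde\Pi A-\Pi^*A^*\|_F$, and then split this bias via the triangle inequality into $\|A-A^*\|_F+\|(\tilde\Pi-\Pi^*)A^*\|_F$. Your explicit remark that the isometry $\tilde\Pi$ transfers the entropy of $\Theta_{\tilde\Pi\mathcal S^m}(\tilde\Pi A,1)$ to that of $\Theta_{\mathcal S^m}(A,1)$ is a point the paper leaves implicit. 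You are also more forthright than the paper about the choice $s\asymp\sigma\sqrt{n+m}$ producing a residual $\sigma^2 n$ term; the paper simply invokes ``the proof of Theorem~\ref{thm:adaptive}'' without addressing this, so your treatment is at least as careful.
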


\begin{proof}
Assume without loss of generality that $\Pi^* = I_n$. 
Let $A \in \mathcal S^m$ and define
\[ f_{\tilde \Pi A}(t) = \sup_{M \in \tilde \Pi \mathcal S^m \cap \mathcal B^{nm}(\tilde \Pi A, t)} \langle M - \tilde \Pi A, Y - \tilde \Pi A\rangle - \frac {t^2}2 \,. \]
Since $\mathcal S^m = \mathcal C_{\mathbf l}^m$ with $\mathbf l = (n, \dots, n)$, by Lemma~\ref{lem:cover-matrix-uni},
\[ \log N\big( \Theta_{\tilde \Pi \mathcal S^m} (A, t), \|\cdot \|_F, \varepsilon \big) \le C \varepsilon^{-1} t \, K(A) \log \frac{enm}{K(A)} \,.\] 
Using the proof of Lemma~\ref{lem:f-bound}, we see that
\[ f_{\tilde \Pi A}(t) \le C \sigma t \sqrt{K(A) \log \frac{enm}{K(A)}} + t \|\tilde \Pi A - A^*\|_F - \frac{t^2}2 + st \]
with probability at least $1-C \exp(-\frac{cs^2}{\sigma^2 })$.
Then the proof of Theorem~\ref{thm:adaptive} implies that with probability at least $1-e^{-c(n+m)}$,
\begin{align*} 
\|\tilde \Pi \tilde A - A^*\|_F^2 &\lesssim \sigma^2 K(A) \log \frac{enm}{K(A)} +  \|\tilde \Pi A - A^*\|_F^2 \\
& \lesssim \sigma^2 K(A) \log \frac{enm}{K(A)} +  \|A - A^*\|_F^2 +  \|\tilde \Pi A^* - A^*\|_F^2  \, .
\end{align*}
Minimizing over $A \in \mathcal S^m$ yields the desired result.
\end{proof}

The idea of splitting the error into two terms as in Lemma~\ref{lem:approx-est} has appeared in \cite{ShaBalGunWai15, ChaMuk16}.

\subsection{Permutation} \label{sec:permutation}

By virtue of Lemma~\ref{lem:approx-est}, it remains to control the permutation error $\|\tilde \Pi A^* -\Pi^* A^*\|_F^2$ where $\tilde \Pi$ is given by the \RankScore\ procedure defined in Section~\ref{sec:rankscore}.
Recall that for $i, i' \in [n]$,
\[
\Delta_{A^*}(i, i') = \max_{j \in [m]} (A^*_{i', j} - A^*_{i, j})\vee \frac 1{\sqrt m} \sum_{j=1}^m (A^*_{i', j} - A^*_{i, j}) 
\]
and $\Delta_Y(i, i')$ is defined analogously. 
Since columns of $A^*$ are increasing,
\begin{equation} \label{eq:def-del-app}
\big|\Delta_{A^*}(i, i') \big| = \|A^*_{i', \cdot} - A^*_{i, \cdot}\|_\infty\vee \frac 1{\sqrt m} \|A^*_{i', \cdot} - A^*_{i, \cdot}\|_1  \,. 
\end{equation}
Recall that the \RankScore\ procedure is defined as follows.
First, for $i \in [n]$, we associate with the $i$-th row of $Y$ a score $s_i$ defined by
\begin{equation} \label{eq:score-app}
s_i =  \sum_{l=1}^n \1(\Delta_Y(l, i) \ge 2\tau) 
\end{equation}
for the threshold $\tau := 3 \sigma \sqrt{\log(nm \delta^{-1})}$ where $\delta$ is the probability of failure. Then we order the rows of $Y$ so that the scores are increasing with ties broken arbitrarily. 

This is equivalent to requiring that the corresponding permutation $\tilde \pi:[n] \to [n]$ satisfies that if $s_i < s_{i'}$ then $\tilde \pi^{-1}(i) < \tilde \pi^{-1}(i')$. Define $\tilde \Pi$ to be the $n \times n$ permutation matrix corresponding to $\tilde \pi$ so that $\tilde \Pi_{\tilde \pi(i), i} = 1$ for $i \in [n]$ and all other entries of $\tilde \Pi$ are zero. Moreover, let $\pi^*:[n] \to [n]$ be the permutation corresponding to $\Pi^*$.

To control the permutation error, we first state a lemma which asserts that if the gap between two rows of $A^*$ is sufficiently large, then the permutation defined above will recover their relative order with high probability.

\begin{lemma} \label{lem:row-gap}
There is an event $\mathcal E$ of probability at least $1- \delta$ on which the following holds. For any $i, i' \in [n]$, if $\Delta_{A^*}(i, i') \ge 4 \tau$, then $\tilde \pi^{-1} \circ \pi^*(i) < \tilde \pi^{-1} \circ \pi^*(i')$.
\end{lemma}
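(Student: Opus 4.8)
The plan is to define the good event $\mathcal E$ as the event on which all the relevant noise fluctuations in the statistics $\Delta_Y$ are controlled by $\tau$, and then to show deterministically on $\mathcal E$ that a large gap $\Delta_{A^*}(i,i') \ge 4\tau$ forces the score inequality $s_{\pi^*(i)} < s_{\pi^*(i')}$, which in turn yields $\tilde\pi^{-1}\circ\pi^*(i) < \tilde\pi^{-1}\circ\pi^*(i')$ by the tie-breaking rule defining $\tilde\pi$. The first step is to record the key concentration fact: for a fixed pair of rows, the quantity $\Delta_Y(\cdot,\cdot) - \Delta_{A^*}(\cdot,\cdot)$ (after relabelling rows by $\pi^*$ so that $Y_{\pi^*(i),\cdot} = A^*_{i,\cdot} + Z_{\pi^*(i),\cdot}$) is a maximum over the $m$ coordinates of a sub-Gaussian difference together with a $\frac{1}{\sqrt m}\sum_j$-term; each of the $m$ coordinatewise differences $Z_{\pi^*(i'),j} - Z_{\pi^*(i),j}$ is $\subG(2\sigma^2)$, and the normalized sum $\frac{1}{\sqrt m}\sum_j (Z_{\pi^*(i'),j}-Z_{\pi^*(i),j})$ is also $\subG(2\sigma^2)$. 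Hence by a union bound over the $\le nm + n^2 \le (nm)^2$ relevant linear functionals and all pairs $i,i'$, with probability at least $1-\delta$ every such fluctuation is at most $\sqrt{2\sigma^2 \cdot 2\log((nm)^2/\delta)} \le 3\sigma\sqrt{\log(nm/\delta)} = \tau$ (adjusting constants as in the statement's choice $\tau = 3\sigma\sqrt{\log(nm\delta^{-1})}$). Call this event $\mathcal E$. On $\mathcal E$ we therefore have $|\Delta_Y(k,k') - \Delta_{A^*}(\pi^{*-1}(k),\pi^{*-1}(k'))| \le \tau$ for all $k,k' \in [n]$ — strictly, $|\Delta_Y(\pi^*(a),\pi^*(b)) - \Delta_{A^*}(a,b)| \le \tau$ for all $a,b$; this is the only probabilistic input.

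The second step is the deterministic argument on $\mathcal E$. Suppose $\Delta_{A^*}(i,i') \ge 4\tau$. I would show $s_{\pi^*(i')} \ge s_{\pi^*(i)} + 1 > s_{\pi^*(i)}$ by exhibiting an injection from the set $L_i := \{l : \Delta_Y(l,\pi^*(i)) \ge 2\tau\}$ into $L_{i'} := \{l : \Delta_Y(l,\pi^*(i')) \ge 2\tau\}$ that misses at least one element of $L_{i'}$, namely $l = \pi^*(i)$. First, $\pi^*(i) \in L_{i'}$: by the triangle-type inequality $\Delta_{A^*}(i,i') \ge 4\tau$ and the concentration bound, $\Delta_Y(\pi^*(i),\pi^*(i')) \ge \Delta_{A^*}(i,i') - \tau \ge 3\tau \ge 2\tau$. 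But $\pi^*(i) \notin L_i$ since $\Delta_Y(\pi^*(i),\pi^*(i)) = \Delta_{A^*}(i,i) = 0 < 2\tau$ on $\mathcal E$. Second, $L_i \subseteq L_{i'}$: the crux is a sub-additivity property of $\Delta_{A^*}$, namely $\Delta_{A^*}(l,i') \ge \Delta_{A^*}(l,i)$ whenever $\Delta_{A^*}(i,i') \ge 0$ — this should follow from the definition, since for the $\ell_\infty$-part one has $\max_j(A^*_{i',j}-A^*_{l,j}) \ge \max_j(A^*_{i,j} - A^*_{l,j})$ at the coordinate where $A^*_{i',j} \ge A^*_{i,j}$... actually more carefully, I would argue using monotonicity of the columns plus the ordering that the large gap gives, combined with $\Delta_{A^*}(l,i') + \Delta_{A^*}(i,l) \ge \Delta_{A^*}(i,i')$ type relations — then for $l \in L_i$, $\Delta_Y(l,\pi^*(i')) \ge \Delta_{A^*}(\pi^{*-1}(l),i') - \tau \ge \Delta_{A^*}(\pi^{*-1}(l),i) - \tau \ge (\Delta_Y(l,\pi^*(i)) - \tau) - \tau \ge 2\tau - 2\tau = 0$ — which is not quite $\ge 2\tau$. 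So I would instead use the stronger gap: since $l \in L_i$ gives $\Delta_{A^*}(\pi^{*-1}(l),i) \ge \Delta_Y(l,\pi^*(i)) - \tau \ge \tau$, and $\Delta_{A^*}(i,i') \ge 4\tau$, monotonicity should upgrade $\Delta_{A^*}(\pi^{*-1}(l),i') \ge \Delta_{A^*}(\pi^{*-1}(l),i) + \Delta_{A^*}(i,i') - (\text{cross term})$; the cleanest route is to note $A^*$ has increasing columns so the rows are "ordered" in the componentwise-dominance sense up to permutation, giving $\Delta_{A^*}(a,c) \ge \Delta_{A^*}(a,b)$ whenever row $b$ is dominated by row $c$, and then $\Delta_Y(l,\pi^*(i')) \ge \Delta_{A^*}(\pi^{*-1}(l),i') - \tau \ge \Delta_{A^*}(\pi^{*-1}(l),i) - \tau \ge 2\tau$. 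Combining, $s_{\pi^*(i')} = |L_{i'}| \ge |L_i| + 1 = s_{\pi^*(i)} + 1$, and by the definition of $\tilde\pi$ (rows ordered by increasing score) this yields $\tilde\pi^{-1}\circ\pi^*(i) < \tilde\pi^{-1}\circ\pi^*(i')$.

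The main obstacle I anticipate is the monotonicity/sub-additivity bookkeeping for $\Delta_{A^*}$ in the second step: one must pin down exactly which inequality of the form $\Delta_{A^*}(l,i') \ge \Delta_{A^*}(l,i)$ (or with a loss) is actually true, given that $\Delta_{A^*}$ is built from an $\ell_\infty$-norm and a normalized $\ell_1$-norm of row differences, and that the ordering of rows induced by "$\Delta_{A^*}(i,i') \ge 0$" is not a total order. I would expect the paper to exploit the fact that the columns of $A^*$ are increasing to first reduce to the case $\Pi^* = I_n$ and to use that $\Delta_{A^*}(i,i') \ge 0$ for $i < i'$, so that along the true order the statistic $\Delta_{A^*}(\cdot,i')$ is monotone in its first argument in the appropriate sense; everything else (the concentration bound, the union bound, the injection argument, and the final translation to $\tilde\pi$) is routine.
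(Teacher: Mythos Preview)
Your overall strategy matches the paper's: define $\mathcal E$ via a union bound on the noise, then on $\mathcal E$ show $s_{\pi^*(i)} < s_{\pi^*(i')}$ by proving the inclusion $L_i \subseteq L_{i'}$ together with $\pi^*(i) \in L_{i'} \setminus L_i$. The concentration step and the treatment of the extra element $\pi^*(i)$ are fine.

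However, your final chain for the inclusion $L_i \subseteq L_{i'}$ does not close. You write
\[
\Delta_Y(l,\pi^*(i')) \;\ge\; \Delta_{A^*}(\pi^{*-1}(l),i') - \tau \;\ge\; \Delta_{A^*}(\pi^{*-1}(l),i) - \tau \;\ge\; 2\tau,
\]
but from $l \in L_i$ you only obtained $\Delta_{A^*}(\pi^{*-1}(l),i) \ge \tau$, so the last inequality would require $\ge 3\tau$, which you do not have. The monotonicity inequality you invoke, $\Delta_{A^*}(a,i') \ge \Delta_{A^*}(a,i)$ when row $i$ is dominated by row $i'$, is true but not strong enough.

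The missing observation (and this is exactly what the paper uses) is that because the columns of $A^*$ are increasing, the rows of $A^*$ are \emph{totally} ordered in the componentwise sense. Hence $\Delta_{A^*}(\pi^{*-1}(l),i) \ge 0$ already forces $A^*_{\pi^{*-1}(l),j} \le A^*_{i,j}$ for \emph{every} $j$, and likewise $\Delta_{A^*}(i,i') \ge 4\tau > 0$ forces $A^*_{i,j} \le A^*_{i',j}$ for every $j$. Therefore $A^*_{i',j} - A^*_{\pi^{*-1}(l),j} \ge A^*_{i',j} - A^*_{i,j}$ for all $j$, which gives the \emph{other} monotonicity inequality
\[
\Delta_{A^*}(\pi^{*-1}(l),i') \;\ge\; \Delta_{A^*}(i,i') \;\ge\; 4\tau,
\]
and then $\Delta_Y(l,\pi^*(i')) \ge 4\tau - 2\tau = 2\tau$ (the paper works with a $2\tau$ fluctuation bound rather than $\tau$, coming from controlling individual $|Z_{i,j}|$ and $\frac{1}{\sqrt m}|\sum_j Z_{i,j}|$ by $\tau$). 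In short, you lower-bounded $\Delta_{A^*}(\pi^{*-1}(l),i')$ by the wrong quantity; once you compare to $\Delta_{A^*}(i,i')$ instead of $\Delta_{A^*}(\pi^{*-1}(l),i)$, the argument goes through exactly as you outlined.
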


\begin{proof}
Since $Z \sim \operatorname{subG}(\sigma^2)$, $Z_{i,j}$ and $\frac 1{\sqrt m} \sum_{j=1}^m  Z_{i, j}$ are sub-Gaussian random variables with variance proxy $\sigma^2$. A standard union bound yields that
\[ \max\Big(\max_{i \in [n], j \in [m]} |Z_{i,j}|, \max_{i \in [n]} \frac 1{\sqrt m} \Big|\sum_{j=1}^m  Z_{i, j}\Big|\Big) \le \tau = 3 \sigma \sqrt{\log(nm \delta^{-1})} \]
on an event $\mathcal E$ of probability at least $1-2(nm+n)\exp(-\frac{\tau^2}{2\sigma^2}) \ge 1-\delta$.

In the sequel, we make statements that are valid on the event $\mathcal E$. Since $Y_{\pi^*(i), j} = A^*_{i,j} + Z_{i, j}$, by the triangle inequality,
\begin{equation} \label{eq:error-gap}
|\Delta_Y(\pi^*(i), \pi^*(i')) - \Delta_{A^*}(i, i')| \le 2 \tau. 
\end{equation}
Suppose that $\Delta_{A^*}(i, i') \ge 4 \tau$. We claim that $s_{\pi^*(i)} < s_{\pi^*(i')}$.  If for $l \in [n]$, $\Delta_Y(\pi^*(l), \pi^*(i))\ge 2 \tau$, then $\Delta_{A^*}(l, i) \ge 0$ by \eqref{eq:error-gap}. Since $A^*$ has increasing columns, $\Delta_{A^*}(l, i') \ge 4 \tau$. Again by \eqref{eq:error-gap}, $\Delta_Y(\pi^*(l), \pi^*(i')) \ge 2 \tau$. By definition \eqref{eq:score-app}, we see that $s_{\pi^*(i)} \le s_{\pi^*(i')}$. Moreover, $\Delta_{A^*}(i, i') \ge 4 \tau$ so $\Delta_Y(\pi^*(i), \pi^*(i')) \ge 2 \tau$. Therefore $s_{\pi^*(i)} < s_{\pi^*(i')}$. According to  the construction of $\tilde \pi$,  $\tilde \pi^{-1} \circ \pi^*(i) < \tilde \pi^{-1} \circ \pi^*(i')$.
\end{proof}

Next, recall that for a matrix $A \in \mathcal S^m$, $\mathcal J$ denotes the set of pairs of indices $(i,j) \in [n]^2$ such that $A_{i,\cdot}$ and $A_{j,\cdot}$ are not identical. The quantity $R(A)$ is defined by
\[
R(A) = \frac 1n \max_{\substack{\mathcal I \subset [n]^2\\ |\mathcal I| = n}} \sum_{(i, j) \in \mathcal I \cap \mathcal J} \Big( \frac{\|A_{i,\cdot} - A_{j,\cdot}\|_2^2}{\|A_{i,\cdot} - A_{j,\cdot}\|_\infty^2}\wedge \frac{m \|A_{i,\cdot} - A_{j,\cdot}\|_2^2}{\|A_{i,\cdot} - A_{j,\cdot}\|_1^2} \Big) \,.
\]
For any nonzero vector $u \in {\R}^m$, $\|u\|_2^2/\|u\|_\infty^2 \ge 1$ with equality achieved when $\|u\|_0 = 1$, and $\|u\|_2^2/\|u\|_1^2 \ge m^{-1}$ with equality achieved when all entries of $u$ are the same. Hence $R(A) \ge 1$.
Moreover, $\|u\|_2^2 \le \|u\|_1 \|u\|_\infty$ by H\"older's inequality, so 
$\frac{\|u\|_2^2}{ \|u\|_\infty^2} \wedge \frac{m \|u\|_2^2}{\|u\|_1^2} \le \sqrt m$
as the product of the two terms is no larger than $m$. The equality is achieved by $u = (1, \dots, 1, 0, \dots, 0)$ where the first $\sqrt m$ entries are equal to one. Therefore,
\[ R(A) \in \big[1, \sqrt m \, \big] \,.\]
Intuitively, the quantity $R(A)$ is small if the difference of any two rows of $A$ is either very sparse or very dense.

\begin{lemma} \label{lem:approx-error}
There is an event $\mathcal E$ of probability at least $1- \delta$ on which
\[ \|\tilde \Pi A^* - \Pi^* A^*\|_F^2 \lesssim \sigma^2 R(A^*) \, n  \log(nm\delta^{-1}) \,. \]
\end{lemma}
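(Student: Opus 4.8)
The plan is to work entirely on the event $\mathcal E$ supplied by Lemma~\ref{lem:row-gap} and to exploit that $A^* \in \mathcal S^m$ makes the rows of $A^*$ totally ordered: $A^*_{1,\cdot} \le A^*_{2,\cdot} \le \cdots \le A^*_{n,\cdot}$ coordinatewise. Let $\tilde\pi$ be the ranking produced by \RankScore, let $\pi^*$ be the permutation of $\Pi^*$, and set $\rho = \tilde\pi^{-1}\circ\pi^*$, a permutation of $[n]$. Matching rows on both sides gives
\[ \|\tilde \Pi A^* - \Pi^* A^*\|_F^2 = \sum_{i=1}^n \|A^*_{\rho(i),\cdot} - A^*_{i,\cdot}\|_2^2 , \]
so it suffices to bound this sum; note that Lemma~\ref{lem:row-gap} reads exactly: on $\mathcal E$, $\Delta_{A^*}(i,i') \ge 4\tau \Rightarrow \rho(i) < \rho(i')$.

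The crux is the following deterministic claim, valid on $\mathcal E$: for every $i \in [n]$ one has $|\Delta_{A^*}(i\wedge\rho(i),\, i\vee\rho(i))| < 4\tau$. To see this, assume without loss of generality that $i < i' := \rho(i)$ and, for contradiction, that $\Delta_{A^*}(i,i') \ge 4\tau$. Since the rows are coordinatewise increasing, $A^*_{k,\cdot} - A^*_{i,\cdot} \ge A^*_{i',\cdot} - A^*_{i,\cdot} \ge 0$ for every $k \ge i'$, whence by \eqref{eq:def-del-app} $\Delta_{A^*}(i,k) \ge \Delta_{A^*}(i,i') \ge 4\tau$ for all $k \in \{i',i'+1,\dots,n\}$. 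Lemma~\ref{lem:row-gap} then forces $\rho(i) < \rho(k)$, i.e. $\rho(k) \ge i'+1$, for all these $k$; thus $\rho$ maps the $n-i'+1$ indices $\{i',\dots,n\}$ injectively into the $(n-i')$-element set $\{i'+1,\dots,n\}$, which is impossible. The case $\rho(i) < i$ is handled symmetrically: one gets $\Delta_{A^*}(k,i) \ge 4\tau$ for all $k \le \rho(i)$, hence $\rho(k) < \rho(i)$ for all $k \in \{1,\dots,\rho(i)\}$, again a pigeonhole contradiction.

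With the claim in hand, I would conclude using the elementary identity, valid for every nonzero $u \in \R^m$,
\[ \|u\|_2^2 = \Big(\frac{\|u\|_2^2}{\|u\|_\infty^2} \wedge \frac{m\|u\|_2^2}{\|u\|_1^2}\Big)\Big(\|u\|_\infty^2 \vee \frac{\|u\|_1^2}{m}\Big) . \]
Applying it to $u_i := A^*_{\rho(i),\cdot} - A^*_{i,\cdot}$ (the corresponding term vanishes when $u_i = 0$), the second factor equals $|\Delta_{A^*}(i\wedge\rho(i),i\vee\rho(i))|^2 < 16\tau^2$ by the claim, so $\|u_i\|_2^2 < 16\tau^2\big(\frac{\|u_i\|_2^2}{\|u_i\|_\infty^2}\wedge\frac{m\|u_i\|_2^2}{\|u_i\|_1^2}\big)$. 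Summing over $i$ and choosing $\mathcal I = \{(i,\rho(i)) : i \in [n]\}$, which has exactly $n$ elements, in the definition \eqref{eq:r} of $R(A^*)$ gives $\sum_{i} \|u_i\|_2^2 < 16\tau^2\, n\, R(A^*)$; since $\tau^2 = 9\sigma^2\log(nm\delta^{-1})$ this is $\lesssim \sigma^2 R(A^*)\, n \log(nm\delta^{-1})$, as claimed. The only real obstacle is the deterministic claim: the key observation is that Lemma~\ref{lem:row-gap} together with the total order on the rows of $A^*$ yields an ``interval'' structure (if $i$ must precede $i'$ then $i$ must precede every $k \ge i'$), which converts a single order violation of size $4\tau$ into a pigeonhole contradiction for $\rho$; everything else is routine bookkeeping.
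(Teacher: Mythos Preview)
Your proof is correct and follows essentially the same route as the paper: both restrict to the event $\mathcal E$ of Lemma~\ref{lem:row-gap}, both establish the key deterministic bound $|\Delta_{A^*}(\cdot,\cdot)| < 4\tau$ via the same pigeonhole argument (monotonicity of the rows propagates the order constraint to an interval, forcing $\rho$ to map $n-i'+1$ indices injectively into $n-i'$ indices), and both conclude via the identity $\|u\|_2^2 = (\min)\cdot(\max)$ together with the definition of $R(A^*)$. Your bookkeeping with $i\wedge\rho(i)$ and $i\vee\rho(i)$ is slightly tidier than the paper's in that it keeps $\Delta_{A^*}$ nonnegative throughout and makes the identification with $\|u_i\|_\infty \vee m^{-1/2}\|u_i\|_1$ unambiguous, but the argument is otherwise identical.
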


\begin{proof}
Throughout the proof, we restrict ourselves to the event $\mathcal E$ defined in Lemma~\ref{lem:row-gap}. To simplify the notation, we define $\alpha_i = A^*_{\tilde \pi^{-1} \circ \pi^*(i),\cdot} - A^*_{i, \cdot}$. Then
\begin{equation} \label{eq:sum-alpha}
\|\tilde \Pi A^* - \Pi^* A^*\|_F^2 = \sum_{i=1}^n \|A^*_{\tilde \pi(i),\cdot} - A^*_{\pi^*(i), \cdot}\|_2^2 
= \sum_{i \in I} \|\alpha_i\|_2^2 \,,
\end{equation}
where $I$ is the set of indices $i$ for which $\alpha_i$ is nonzero.
For each $i \in I$,
\begin{align}
\|\alpha_i\|_2^2 &= \min\Big( \frac{\|\alpha_i\|_2^2}{\|\alpha_i\|_\infty^2}, \frac{m\|\alpha_i\|_2^2}{\|\alpha_i\|_1^2}\Big)  \cdot \max \Big(\|\alpha_i\|_\infty^2, \frac{ \|\alpha_i\|_1^2}{m} \Big) \nonumber \\
&= \min\Big( \frac{\|\alpha_i\|_2^2} {\|\alpha_i\|_\infty^2}, \frac{m\|\alpha_i\|_2^2}{\|\alpha_i\|_1^2}\Big)  \cdot \Delta_{A^*} \big(i, \tilde \pi^{-1} \circ \pi^*(i) \big)^2 \label{eq:min-del}
\end{align}
by \eqref{eq:def-del-app}.

Next, we proceed to showing that $|\Delta_{A^*}(i, \nu(i))| \le 4\tau$ for any $i \in [n]$, where $\nu= \tilde \pi^{-1} \circ \pi^*$. 
To that end, note that if $\Delta_{A^*}(i, \nu(i)) > 4\tau$, in which case $\Delta_{A^*}(i, i') > 4\tau$ for all $i' \in I':=\{i' \in [n]\,:\, i'\ge \nu(i)\}$, then  
it follows from Lemma~\ref{lem:row-gap} that on $\cE$,  $\nu(i) < \nu(i'), \ \forall\  i \in I'$. Note that $|\nu(I')|=|I'|=n-\nu(i)+1$. Hence $\nu(i) < \nu(i'), \ \forall\  i \in I'$ implies that  $\nu(i)\le n-|\nu(I')|=\nu(i)-1$, which is a contradiction. Therefore, there does not exist such $i \in [n]$ on $\cE$. The case where $\Delta_{A^*}(i, \nu(i)) < -4\tau$ is treated in a symmetric manner.

Combining this bound with \eqref{eq:sum-alpha} and \eqref{eq:min-del}, we conclude that
\begin{align*}
\|\tilde \Pi A^* - \Pi^* A^*\|_F^2 & \lesssim \sum_{i \in I} \min\Big( \frac{\|\alpha_i\|_2^2}{\|\alpha_i\|_\infty^2}, \frac{m\|\alpha_i\|_2^2}{\|\alpha_i\|_1^2}\Big)  \cdot \tau^2 \\
&\lesssim \sigma^2 R(A^*) \, n \log(nm\delta^{-1}) \,.
\end{align*}
by the definitions of $R(A^*)$ and $\tau$.
\end{proof}

\subsection{Proof of Theorem~\ref{thm:monotone}}

The bound is an immediate consequence of Lemma~\ref{lem:approx-est} and Lemma~\ref{lem:approx-error} with $\delta=(nm)^{-C}$ for $C > 0$.



\section{Upper bounds in a trivial case} \label{sec:trivial-upper}

In Theorem~\ref{thm:lower-global}, we have observed the term $\frac{\sigma^2}{m} \wedge m^2 V(A)^2$, whereas the LS estimator only has $\frac{\sigma^2}{m} \log n$ in the upper bounds. The next proposition shows that in the case $m^2 V(A)^2 \le \frac{\sigma^2}{m}$, we can simply use an averaging estimator that achieves the term $m^2 V(A)^2$.

\begin{proposition} \label{prop:special-regime}
For $Y = \Pi^* A^* + Z$ where $Z \sim \operatorname{subG}(\sigma^2)$, let $\hat \Pi = I_n$ and $\hat A$ be defined by $\hat A_{i,j} = \frac 1n \sum_{k=1}^n Y_{k,j}$ for all $(i,j) \in [n]\times [m]$. Then,
\[ \frac{1}{nm} \| \hat \Pi\hat A - \Pi^* A^*\|_F^2\lesssim \frac{\sigma^2} {n} +  m^2 V(A)^2 \]
with probability at least $1-\exp(-m)$ and 
\[ \frac{1}{nm}\E \| \hat \Pi\hat A - \Pi^* A^*\|_F^2\lesssim \frac{\sigma^2} {n} +  m^2 V(A)^2 \, .\]
\end{proposition}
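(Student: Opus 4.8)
The plan is to analyze the averaging estimator column by column; the argument is entirely elementary. Since $\hat\Pi = I_n$, we must bound $\frac{1}{nm}\|\hat A - \Pi^* A^*\|_F^2$. Fix a column index $j\in[m]$. The column average of $\Pi^* A^*$ equals that of $A^*$ (permuting rows only reorders the sum), so for every $i\in[n]$,
\[
\hat A_{i,j} = \frac1n\sum_{k=1}^n Y_{k,j} = \bar a_j + \bar z_j,\qquad \bar a_j := \frac1n\sum_{k=1}^n A^*_{k,j},\quad \bar z_j := \frac1n\sum_{k=1}^n Z_{k,j};
\]
that is, the $j$-th column of $\hat A$ is the constant vector $(\bar a_j+\bar z_j)\mathbf{1}$. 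Since the $j$-th column of $\Pi^* A^*$ is a reordering of $(A^*_{1,j},\dots,A^*_{n,j})$ and the cross term $\sum_{k=1}^n(\bar a_j-A^*_{k,j})$ vanishes,
\[
\|\hat A_{\cdot,j}-(\Pi^* A^*)_{\cdot,j}\|_2^2 = \sum_{k=1}^n(\bar a_j-A^*_{k,j})^2 + n\,\bar z_j^2 .
\]
Summing over $j$ decomposes the squared error into a deterministic approximation part and a stochastic part, which I would handle separately.

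For the approximation part, $\bar a_j$ lies between $\min_k A^*_{k,j}$ and $\max_k A^*_{k,j}$, so $(\bar a_j-A^*_{k,j})^2\le V(A^*_{\cdot,j})^2$ and hence $\sum_k(\bar a_j-A^*_{k,j})^2\le n\,V(A^*_{\cdot,j})^2$; after dividing by $nm$ this contributes at most $\frac1m\sum_{j=1}^m V(A^*_{\cdot,j})^2$. Writing $v_j=V(A^*_{\cdot,j})^{2/3}\ge 0$, the elementary inequality $\sum_j v_j^3\le\bigl(\sum_j v_j\bigr)^3$ for nonnegative reals gives
\[
\frac1m\sum_{j=1}^m V(A^*_{\cdot,j})^2 = \frac1m\sum_{j=1}^m v_j^3 \le \frac1m\Bigl(\sum_{j=1}^m v_j\Bigr)^3 = m^2\Bigl(\frac1m\sum_{j=1}^m v_j\Bigr)^3 = m^2\,V(A^*)^2,
\]
which is exactly the deterministic term in the claimed bound.

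It remains to control the stochastic part $\frac1m\sum_{j=1}^m\bar z_j^2 = \frac1m\|W\|_2^2$, where $W=(\bar z_1,\dots,\bar z_m)\in\R^m$. The key observation is that $W$ is a centered sub-Gaussian vector with variance proxy $\sigma^2/n$: for any unit $u\in\R^m$ one has $\langle W,u\rangle = \mathsf{Tr}(Z^\top M)$ with $M=\frac1n\mathbf{1}u^\top$, and $\|M\|_F^2 = 1/n$, so $\E[\exp(\lambda\langle W,u\rangle)]\le\exp(\sigma^2\lambda^2/(2n))$ for all $\lambda$. Specializing to $u=e_j$ gives $\E\bar z_j^2\le\sigma^2/n$, which at once yields the expectation bound $\frac1{nm}\E\|\hat A-\Pi^* A^*\|_F^2\le\frac1m\sum_j V(A^*_{\cdot,j})^2 + \frac{\sigma^2}{n}\le m^2 V(A^*)^2+\frac{\sigma^2}{n}$. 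For the high-probability statement I would pass to a $\frac12$-net $\mathcal N$ of the sphere $S^{m-1}$ with $|\mathcal N|\le 5^m$, use $\|W\|_2\le 2\max_{u\in\mathcal N}\langle W,u\rangle$ together with the sub-Gaussian tail $\p[\langle W,u\rangle>t]\le\exp(-nt^2/(2\sigma^2))$ and a union bound over $\mathcal N$, and choose $t\asymp\sigma\sqrt{m/n}$ so that $5^m\exp(-nt^2/(2\sigma^2))\le e^{-m}$; this gives $\|W\|_2^2\lesssim\sigma^2 m/n$, hence $\frac1m\|W\|_2^2\lesssim\sigma^2/n$, with probability at least $1-e^{-m}$. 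Combined with the approximation bound, this proves both assertions. There is no real difficulty here; the only point requiring a moment's thought is that $Z$ is assumed only jointly sub-Gaussian, so the entries $\bar z_j$ of $W$ need not be independent and one cannot invoke a scalar $\chi^2$ tail directly --- the net argument circumvents this.
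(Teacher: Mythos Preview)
Your proof is correct and follows essentially the same route as the paper: decompose the error columnwise into a deterministic approximation term bounded via the $\ell_{2/3}$-to-$\ell_2$ norm comparison $\sum_j V(A_{\cdot,j})^2\le m^3 V(A)^2$, and a stochastic term controlled by sub-Gaussian concentration of the column-averaged noise. The only cosmetic differences are that you exploit the vanishing cross term for an exact Pythagorean split (the paper instead uses $(a+b)^2\le 2a^2+2b^2$), and you carry out an explicit $\varepsilon$-net argument where the paper invokes the Hsu--Kakade--Zhang tail inequality for quadratic forms of sub-Gaussian vectors.
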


\begin{proof}
Recall that $V(A) = (\frac 1m \sum_{j=1}^m V_j(A)^{2/3} )^{3/2}$. Since the $\ell_2$-norm of a vector is no larger than the $\ell_{\frac 23}$-norm,
\[ \sum_{j=1}^m V_j(A)^2 \le \Big(\sum_{j=1}^m V_j(A)^{2/3} \Big)^{3} = m^{3} V(A)^2. \]
On the other hand,
\[ \hat A_{i,j} = \frac 1n \sum_{k=1}^n A^*_{k,j} + \frac 1n \sum_{k=1}^n Z_{k,j} \, ,\]
so we have that
\begin{align*} 
& \quad \  \|\hat \Pi \hat A - \Pi^* A^*\|_F^2 \\
&= \sum_{i \in [n], j \in [m]} \Big(\frac 1n \sum_{k=1}^n A^*_{k,j} + \frac 1n \sum_{k=1}^n Z_{k,j} - A^*_{i, j} \Big)^2 \\
&\le 2 \sum_{i \in [n], j \in [m]} \Big(\frac 1n \sum_{k=1}^n A^*_{k,j} - A^*_{i, j} \Big)^2 + \frac 2{n^2} \sum_{i \in [n], j \in [m]} \Big(\sum_{k=1}^n Z_{k,j} \Big)^2 \\
&\le 2 n \sum_{j \in [m]} V_j(A)^2 + \frac 2n \sum_{j \in [m]} \Big(\sum_{k=1}^n Z_{k,j} \Big)^2 \\
& \le 2 n m^3 V(A)^2 + 2 \sum_{j \in [m]} g_j^2 \, ,
\end{align*}
where $g_j = \frac{1}{\sqrt n} \sum_{k=1}^n Z_{k,j}$ for $j \in [m]$ so that $g_1, \dots, g_m$ are centered sub-Gaussian variables with variance proxy $\sigma^2$. 
It is well-known that ${\E} g_j^2 \lesssim \sigma^2$, so
\[ {\E} \|\hat \Pi \hat A - \Pi^* A^*\|_F^2 \lesssim n m^3 V(A)^2 + m \sigma^2. \]
Moreover, since $(g_1, \dots, g_m)$ is a sub-Gaussian vector with variance proxy $\sigma^2$, it follows from \cite[Theorem~2.1]{HsuKakZha12} that
$\sum_{j=1}^m g_j^2 \lesssim  \sigma^2 m$
with probability at least $1-\exp(-m)$. On this event,
\[ \|\hat \Pi \hat A - \Pi^* A^*\|_F^2 \lesssim  n m^3 V(A)^2 + m \sigma^2. \]
Dividing the previous two displays by $nm$ completes the proof.
\end{proof}



\section{Unimodal regression} \label{sec:unimodal}

If the permutation in the main model \eqref{eq:model} is known, then the estimation problem simply becomes a concatenation of $m$ unimodal regressions. In fact, our proofs imply new oracle inequalities for unimodal regression. Recall that $\mathcal U$ denotes the cone of unimodal vectors in ${\R}^n$.  Suppose that we observe
\[y = \theta^* + z \,,\] 
where $\theta^* \in {\R}^n$ and $z$ is a sub-Gaussian vector with variance proxy $\sigma^2$. Define the LS estimator $\hat \theta$ by 
\[ \hat \theta \in \operatorname*{argmin}_{\theta \in \mathcal U} \|\theta - y\|_2^2 \,. \] 
Moreover let $k(\theta) = \card(\{\theta_1, \dots, \theta_n\})$ and $V(\theta) = \max_{i\in [n]} \theta_i - \min_{i \in [n]} \theta_i$.

\begin{corollary} \label{cor:unimodal}
There exists a constant $c>0$ such that with probability at least $1- n^{-\alpha}$, $\alpha \ge 1$,
\begin{equation}
\label{EQ:oiunimodal}
\frac 1n \|\hat \theta - \theta^*\|_2^2 \lesssim \min_{\theta \in \mathcal U} \Big( \frac 1n \|\theta - \theta^*\|_2^2 + \sigma^2 \frac{k(\theta)}{n} \log \frac{en}{k(\theta)} \Big) + \alpha \sigma^2 \frac{\log n}{n}
\end{equation}
and
\[ \frac 1n \|\hat \theta - \theta^*\|_2^2 \lesssim  \min_{\theta \in \mathcal U} \Big[ \frac 1n \|\theta - \theta^*\|_2^2 + \Big(\frac{\sigma^2 V(\theta) \log n}{n}\Big)^{2/3} \Big]  + \alpha \sigma^2 \frac{\log n}{n} \,.
\]
The corresponding bounds in expectation also hold.
\end{corollary}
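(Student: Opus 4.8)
The plan is to specialize the proofs of Theorem~\ref{thm:adaptive} and Theorem~\ref{thm:global} to the case $m = 1$, replacing the parameter set $\mathcal M$ by the cone $\mathcal U = \bigcup_{l=1}^n \mathcal C_l$. For the first (adaptive) bound I would fix an oracle $\tilde\theta \in \mathcal U$ and, as in Lemma~\ref{lem:f-bound}, introduce
\[ f_{\tilde\theta}(t) = \sup_{\theta \in \mathcal U \cap \mathcal B^n(\tilde\theta, t)} \langle \theta - \tilde\theta, y - \tilde\theta\rangle - \frac{t^2}{2}. \]
Since $\mathcal U$ is a finite union of closed convex cones, hence star-shaped, the scaling argument of Lemma~\ref{lem:f-bound} reduces the noise part of $f_{\tilde\theta}(t)$ to $t\sup_{w \in \Theta_{\mathcal U}(\tilde\theta, t)}\langle w, z\rangle$ with $\Theta_{\mathcal U}(\tilde\theta, t) \subset \mathcal B^n(0, t)$. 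Applying Lemma~\ref{lem:cover-matrix-uni} with $m = 1$ to each of the $n$ cones $\mathcal C_l$ and taking a union bound over $l \in [n]$ yields
\[ \log N(\Theta_{\mathcal U}(\tilde\theta, t), \|\cdot\|_2, \varepsilon) \le C\varepsilon^{-1} t\, k(\tilde\theta)\log\frac{en}{k(\tilde\theta)} + \log n, \]
which, plugged into the chaining tail inequality of Lemma~\ref{lem:chaining-tail} and integrated exactly as in Lemma~\ref{lem:f-bound}, gives for every $s > 0$, simultaneously for all $t > 0$,
\[ f_{\tilde\theta}(t) \le C\sigma t\Big(\sqrt{k(\tilde\theta)\log\tfrac{en}{k(\tilde\theta)}} + \sqrt{\log n}\Big) + t\|\theta^* - \tilde\theta\|_2 - \frac{t^2}{2} + st \]
with probability at least $1 - C\exp(-cs^2/\sigma^2)$.

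Next I would invoke Lemma~\ref{lem:variational} with $\mathcal C = \mathcal U$, $a^* = \tilde\theta$ and $z = y - \tilde\theta$: the displayed bound shows $f_{\tilde\theta}(t) < 0$ once $t$ exceeds $t^* \asymp \sigma\big(\sqrt{k(\tilde\theta)\log\frac{en}{k(\tilde\theta)}} + \sqrt{\log n}\big) + \|\theta^* - \tilde\theta\|_2 + s$, so $\|\hat\theta - \tilde\theta\|_2 \le t^*$ and, by the triangle inequality,
\[ \|\hat\theta - \theta^*\|_2^2 \lesssim \|\theta^* - \tilde\theta\|_2^2 + \sigma^2 k(\tilde\theta)\log\frac{en}{k(\tilde\theta)} + \sigma^2\log n + s^2. \]
Choosing $s = C\sigma\sqrt{(\alpha + 1)\log n}$ with $C$ large enough that $C\exp(-cs^2/\sigma^2) \le n^{-\alpha}$ for all $\alpha \ge 1$, $n \ge 2$, the term $s^2$ becomes $\lesssim \alpha\sigma^2\log n$; dividing by $n$ and taking the infimum over $\tilde\theta \in \mathcal U$ gives \eqref{EQ:oiunimodal}. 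The expectation bound follows by integrating the sub-exponential tail as at the end of the proof of Theorem~\ref{thm:adaptive}.

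For the second (global) bound I would, given any $\theta \in \mathcal U$, apply the block approximation of Lemma~\ref{lem:vector-approximation} with $k = k^* = \lceil (V(\theta)^2 n/(\sigma^2\log(en)))^{1/3}\rceil$ to obtain $\tilde\theta \in \mathcal U_{k^*}$ with $\frac{1}{n}\|\tilde\theta - \theta\|_2^2 \lesssim (\frac{\sigma^2 V(\theta)\log n}{n})^{2/3} + \frac{\sigma^2\log n}{n}$ and $\frac{\sigma^2 k^*}{n}\log(en) \lesssim (\frac{\sigma^2 V(\theta)\log n}{n})^{2/3} + \frac{\sigma^2\log n}{n}$; feeding $\tilde\theta$ into \eqref{EQ:oiunimodal} and using $\|\tilde\theta - \theta^*\|_2^2 \le 2\|\theta - \theta^*\|_2^2 + 2\|\tilde\theta - \theta\|_2^2$, then minimizing over $\theta \in \mathcal U$, yields the claim, with the expectation version following by the same tail integration. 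The only genuinely new element compared with the proofs of Theorems~\ref{thm:adaptive}--\ref{thm:global} is the union bound over the $n$ cones $\mathcal C_l$, which produces the additive $\log n$ in the metric entropy and hence the $\alpha\sigma^2\frac{\log n}{n}$ term, together with the bookkeeping that ties the constant in $s$ to the prescribed failure probability $n^{-\alpha}$; I expect that to be the only mildly delicate point, everything else being a direct transcription with $m = 1$. It is worth emphasizing that the resulting exponents coincide with those of isotonic regression, thereby improving \eqref{eq:unimodal}.
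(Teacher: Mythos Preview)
Your proposal is correct and follows essentially the same route as the paper. Two minor remarks: first, after the scaling step the set should be $\Theta_{\mathcal U}(\tilde\theta,1)$ rather than $\Theta_{\mathcal U}(\tilde\theta,t)$, though this is purely notational. Second, the paper observes that the additive $\log n$ from the union bound over the $n$ cones $\mathcal C_l$ is already absorbed into $C\varepsilon^{-1}t\,k(\tilde\theta)\log\frac{en}{k(\tilde\theta)}$ (since $k(\tilde\theta)\log\frac{en}{k(\tilde\theta)} \ge \log(en)$ for all $k(\tilde\theta)\in[n]$ and $\varepsilon\le t$), so the metric entropy bound is simply $C\varepsilon^{-1}t\,k(\tilde\theta)\log\frac{en}{k(\tilde\theta)}$; the term $\alpha\sigma^2\frac{\log n}{n}$ in the statement then arises solely from the choice $s=C\sigma\sqrt{\alpha\log n}$ needed to reach failure probability $n^{-\alpha}$, not from the union over cones. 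Your version, keeping the $\log n$ explicit, is of course equally valid and yields the same bound.
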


\begin{proof}
The proof closely follows that of Theorem~\ref{thm:adaptive} and Theorem~\ref{thm:global}.


First note that the term $n \log n$ in the bound of Lemma~\ref{lem:cover-matrix-union} comes from a union bound applied to the set of permutations, so it is not present if we consider only the set of unimodal matrices $\mathcal U^m$ instead of $\mathcal M$. Hence taking $m=1$ in the lemma yields that
\[ \log N \big(\Theta_{\mathcal U} (\tilde \theta, t), \|\cdot\|_2, \varepsilon \big) \le C \varepsilon^{-1} t\, k(\tilde \theta) \log \frac{en}{k(\tilde \theta)} \, . \]

For $\tilde \theta \in \mathcal U$, define
\[ f_{\tilde \theta}(t) = \sup_{\theta \in \mathcal U \cap \mathcal B^{n}(\tilde \theta, t)} \langle \theta - \tilde \theta, y - \tilde \theta \rangle - \frac {t^2}2 \, . \]
Following the proof of Lemma~\ref{lem:f-bound} and using the above metric entropy bound, we see that
\[ f_{\tilde \theta}(t) \le C \sigma t \sqrt{k(\tilde \theta) \log \frac{en}{k(\tilde \theta)}} + t \|\tilde \theta - \theta^*\|_2 - \frac{t^2}2 + st \]
with probability at least $1-C \exp(-\frac{cs^2}{\sigma^2 })$.
Then the proof of Theorem~\ref{thm:adaptive} gives that with probability at least $1-C \exp(-\frac{cs^2}{\sigma^2 })$,
\[
\|\hat \theta - \theta^*\|_2 \le C \Big(\sigma \sqrt{k(\tilde \theta) \log \frac{en}{k(\tilde \theta)}} + \|\tilde \theta - \theta^*\|_2 \Big) + 2s \,.
\]
Taking $s=C \sigma \sqrt{\alpha \log n}$ for $\alpha \ge 1$ and $C$ sufficiently large, we get that with probability at least $1- n^{-\alpha}$,
\[
\|\hat \theta - \theta^*\|_2^2 \lesssim \sigma^2 k(\tilde \theta) \log \frac{en}{k(\tilde \theta)} +  \|\tilde \theta - \theta^*\|_2^2 + \alpha \sigma^2 \log n \,.\]
Minimizing over $\tilde \theta \in \mathcal U$ yields \eqref{EQ:oiunimodal}.
The corresponding bound in expectation follows from integrating the tail probability as in the proof of Theorem~\ref{thm:adaptive}. 

Finally, we can apply the proof of Theorem~\ref{thm:global} with $m=1$ to achieve the global bound.
\end{proof}

Note that the bounds in Corollary~\ref{cor:unimodal} match the minimax lower bounds for isotonic regression in \cite{BelTsy15} up to logarithmic factors. Since every monotonic vector is unimodal, lower bounds for isotonic regression automatically hold for unimodal regression. Therefore, we have proved that the LS estimator is minimax optimal up to logarithmic factors for unimodal regression.

A result similar to~\eqref{EQ:oiunimodal} was obtained by Bellec in the revision of~\cite{Bel15} that was prepared independently and contemporaneously to this paper. Chatterjee and Lafferty also improved their bounds to having optimal exponents \cite{ChaLaf15} after the first version of our current paper was posted. Interestingly Bellec employs bounds on the statistical dimension by leveraging results from~\cite{Ameetal14}, and Chatterjee and Lafferty use both the variational formula and the statistical dimension. Moreover, their results are presented in the well-specified case where $\theta^* \in \cU$ and $\theta =\theta^*$.

\newcommand{\etalchar}[1]{$^{#1}$}

\end{document}